\theoremstyle{plain}
\newtheorem{theorem}{Theorem}
\newtheorem{lemma}{Lemma}
\newtheorem{proposition}{Proposition}
\newtheorem{corollary}{Corollary}[theorem]
\theoremstyle{definition}
\newtheorem{example}{Example}
\newtheorem{definition}{Definition}
\begin{document}

\begin{center}\Large
\textbf{Fitting like subgroups  of finite groups}
\normalsize

\smallskip
V.\,I. Murashka and A.\,F. Vasil'ev

 \{mvimath@yandex.ru, formation56@mail.ru\}

Faculty of Mathematics and Technologies of Programming,

 Francisk Skorina Gomel State University,  Gomel 246019, Belarus\end{center}

\begin{abstract}
  In this paper the concept of $\mathbb{F}$-functorial of a finite group was introduced. These functorials have many properties of the Fitting subgroup of a soluble group and the generalized Fitting subgroup of a finite group.  It was shown that the set of all $\mathbb{F}$-functorials is a complete distributive lattice and the cardinality of this lattice is continuum. The sharp bounds on the generalized Fitting height of mutually permutable product of two subgroups were obtained.
\end{abstract}

 \textbf{Keywords.} Finite group; Fitting subgroup; generalized Fitting subgroup; $\mathbb{F}$-functorial; functorial; complete distributive lattice, mutually permutable product, nilpotent height.

\textbf{AMS}(2010). 20D25,  20F17,   20F19.

\section*{Introduction}

Throughout this paper, all groups are finite,   $G$, 1, $p$ and  $\mathfrak{X}$  always denote a finite group,  the unit group,   a prime  and a class of groups, respectively.

H. Fitting \cite{Fitting1938} showed that the product of normal nilpotent subgroups is again nilpotent. Hence every group $G$ has the largest normal nilpotent subgroup
 $\mathrm{F}(G)$. Now this subgroup is called the Fitting subgroup. The following properties of the Fitting subgroup are well known:

\begin{proposition}\label{prop0} Let  $G$ be a group. The following holds:

$(1)$ $f(\mathrm{F}(G))\subseteq \mathrm{F}(f(G))$ for every epimorphism $f: G\to G^*$.

$(2)$ $\mathrm{F}(N)\subseteq \mathrm{F}(G)$ for every $N\trianglelefteq G$.

$(3)$  If $G$ is soluble, then $C_G(\mathrm{F}(G))\subseteq\mathrm{F}(G)$.

$(4)$  $\mathrm{F}(G/\Phi(G))=\mathrm{F}(G)/\Phi(G)=\mathrm{ASoc}(G/\Phi(G))\leq\mathrm{Soc}(G/\Phi(G))$.

  \end{proposition}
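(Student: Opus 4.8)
The plan is to handle the four items separately, since each rests on a different classical mechanism, and to lean throughout on the defining property of $\mathrm{F}(G)$ as the (unique) largest normal nilpotent subgroup. For $(1)$ I would argue that $f(\mathrm{F}(G))$ is a homomorphic image of a nilpotent group, hence nilpotent, and that the image of a normal subgroup under the epimorphism $f$ is normal in $f(G)=G^{*}$; thus $f(\mathrm{F}(G))$ is a normal nilpotent subgroup of $G^{*}$ and so lies in $\mathrm{F}(G^{*})=\mathrm{F}(f(G))$ by maximality. For $(2)$ I would note that $\mathrm{F}(N)$ is characteristic in $N$, that a characteristic subgroup of a normal subgroup $N\trianglelefteq G$ is normal in $G$, and that $\mathrm{F}(N)$ is nilpotent, so it is contained in $\mathrm{F}(G)$.

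For $(3)$ I would run a chief-factor argument. Set $F=\mathrm{F}(G)$ and $C=C_G(F)$; both are normal in $G$ and $C\cap F=Z(F)$. Assume $C\not\subseteq F$, so $C/Z(F)\neq 1$, and pick $W\trianglelefteq G$ with $Z(F)<W\le C$ and $W/Z(F)$ a chief factor of $G$. Solubility makes $W/Z(F)$ abelian, so $W'\le Z(F)$; since $W\le C$ centralizes $F\supseteq Z(F)$ we have $Z(F)\le Z(W)$, whence $W'\le Z(W)$ and $W$ is nilpotent. As $W\trianglelefteq G$ this gives $W\le F$, so $W\le C\cap F=Z(F)$, contradicting $Z(F)<W$. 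Hence $C\subseteq F$.

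For $(4)$ I would prove the two equalities in turn. First, $\Phi(G)$ is nilpotent and normal, so $\Phi(G)\le\mathrm{F}(G)$, and applying $(1)$ to the canonical projection yields $\mathrm{F}(G)/\Phi(G)\le\mathrm{F}(G/\Phi(G))$. For the reverse inclusion I would establish the lemma that a normal subgroup $K\trianglelefteq G$ with $\Phi(G)\le K$ and $K/\Phi(G)$ nilpotent is itself nilpotent: for a prime $p$ and a Sylow $p$-subgroup $P$ of $K$, the image $P\Phi(G)/\Phi(G)$ is a normal Sylow subgroup of $K/\Phi(G)$, so $P\Phi(G)\trianglelefteq G$, and the Frattini argument gives $G=P\Phi(G)\,N_G(P)=\Phi(G)\,N_G(P)=N_G(P)$ since $\Phi(G)$ consists of nongenerators; thus $P\trianglelefteq K$ for every $p$ and $K$ is nilpotent. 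Taking $K/\Phi(G)=\mathrm{F}(G/\Phi(G))$ gives $\mathrm{F}(G/\Phi(G))\le\mathrm{F}(G)/\Phi(G)$, hence equality. Then, passing to $\bar G=G/\Phi(G)$ with $\Phi(\bar G)=1$, I would show that $\Phi(H)=1$ forces $\mathrm{F}(H)=\mathrm{ASoc}(H)$: since $\Phi(O_p(H))\le\Phi(H)=1$, each $O_p(H)$ is elementary abelian, so $\mathrm{F}(H)=\prod_p O_p(H)$ is abelian; by Gaschütz's theorem an abelian normal subgroup meeting $\Phi(H)$ trivially is a direct product of minimal normal subgroups of $H$, so $\mathrm{F}(H)\le\mathrm{ASoc}(H)$, while the reverse inclusion holds because abelian minimal normal subgroups are nilpotent and normal. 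The inclusion $\mathrm{ASoc}(\bar G)\le\mathrm{Soc}(\bar G)$ is immediate from the definitions.

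I expect the genuinely non-formal content to be confined to $(3)$ and the two lemmas feeding $(4)$: namely deducing nilpotency of $W$ from the centralizer condition, the Frattini-argument lemma on $\Phi$-closed normal subgroups, and the appeal to Gaschütz together with the fact that $\Phi(N)\le\Phi(G)$ for $N\trianglelefteq G$. Items $(1)$ and $(2)$ and the containment at the end of $(4)$ are bookkeeping with maximality and the characteristic/normal transfer.
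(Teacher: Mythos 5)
The paper gives no proof of this proposition: it is stated in the introduction as a list of well-known properties of the Fitting subgroup, with no argument supplied, so there is nothing internal to compare your write-up against. Your proof is correct and is the standard textbook derivation of each item. Parts $(1)$ and $(2)$ are exactly the maximality/characteristic-transfer bookkeeping you describe. Your part $(3)$ is the classical chief-factor argument: the only points worth making explicit are that $Z(F)=C\cap F$ is normal in $G$ (being characteristic in $F\trianglelefteq G$), so a chief factor $W/Z(F)$ inside $C/Z(F)$ exists, and that $W'\le Z(F)\le Z(W)$ gives nilpotency of class at most $2$. In part $(4)$ the Frattini-argument lemma (a normal subgroup $K\supseteq\Phi(G)$ with $K/\Phi(G)$ nilpotent is nilpotent) is Gasch\"utz's proof that $\mathrm{F}(G/\Phi(G))=\mathrm{F}(G)/\Phi(G)$, and the identification with $\mathrm{ASoc}(G/\Phi(G))$ correctly combines $\Phi(N)\le\Phi(G)$ for $N\trianglelefteq G$ (to get each $\mathrm{O}_p$ elementary abelian) with Gasch\"utz's complementation/complete-reducibility theorem for abelian normal subgroups avoiding the Frattini subgroup. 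You also implicitly use that $\Phi(G)$ is nilpotent to get $\Phi(G)\le\mathrm{F}(G)$; that is classical and fine to cite. No gaps.
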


The Fitting subgroup has a great influence on the structure of a finite soluble group. Analyzing proves of theorems about soluble groups which use the Fitting subgroup, one can note that they often use Properties $(1)$, $(2)$ and $(3)$. In the class of all groups the Fitting subgroup does not have Property $(3)$. Moreover there are infinite number of non-isomorphic groups with the trivial Fitting subgroup.

Recall that a group is called quasinilpotent if every its element induces an inner automorphism on every its chief factor. The generalized Fitting subgroup $\mathrm{F}^*(G)$ is the set of all elements which induce an inner automorphism on every chief factor of $G$ \cite[X, Definition 13.9]{19}. It is also known as the greatest normal quasinilpotent subgroup and has many properties of the Fitting subgroup of a soluble group.
The generalized Fitting subgroup \cite[X, Theorem 13.13]{19} can also be defined by
$$ \mathrm{F}^*(G)/\mathrm{F}(G)=\mathrm{Soc}(\mathrm{F}(G)C_G(\mathrm{F}(G))/\mathrm{F}(G)).$$


  Note that the generalized Fitting subgroup is non-trivial in every group but there are groups in which it coincides with the Frattini subgroup. That is why there is another generalization of the Fitting subgroup   $\tilde{\mathrm{F}}(G)$ introduced by P. Schmid \cite{Schmid1972} and L.A. Shemetkov
 \cite[Definition~7.5]{f4} and defined by
 $$ \Phi(G)\subseteq \tilde{\mathrm{F}}(G) \textrm{ and
  } \tilde{\mathrm{F}}(G)/\Phi(G)=\mathrm{Soc}(G/\Phi(G)).$$







P. F\"orster \cite{Foerster1985} showed that $\tilde{\mathrm{F}}(G)$ can be defined by
 $$ \Phi(G)\subseteq \tilde{\mathrm{F}}(G) \textrm{ and
  } \tilde{\mathrm{F}}(G)/\Phi(G)=\mathrm{F}^*(G/\Phi(G)).$$
D.A. Towers \cite{TOWERS2017} defined and studied analogues of $\mathrm{F}^*$ and $\mathrm{\tilde F}$ for  Lie algebras.

According to B.I. Plotkin \cite{Plotkin1973} a functorial   is a function $\theta$ which assigns to each group $G$ its characteristic subgroup $\theta(G)$   satisfying $f(\theta(G)) =
\theta(f(G)) $ for any isomorphism $f: G \to G^*$. 
He used this concept to study radicals and corresponding to them classes of  not necessary finite groups   in the sense of the following definition.

\begin{definition}
  A functorial $\gamma$ is called a   radical  if 
  
  $(F0)$ $\gamma(\gamma(G))=\gamma(G)$,
  
  $(F1)$   $f(\gamma(G))\subseteq \gamma(f(G))$ for every epimorphism $f: G\to G^*$,
  
  $(F2)$  $\gamma(N)\subseteq \gamma(G)$ for every $N\trianglelefteq G$.
   \end{definition}
  Nowadays these radicals are called  Plotkin radicals and also have applications in the ring theory \cite[p. 28]{Gardner2003}.

Let $\mathrm{F}: G\to \mathrm{F}(G)$, $\mathrm{F}^*: G\to \mathrm{F}^*(G)$ and $\mathrm{\tilde F}: G\to \mathrm{\tilde F}(G)$. Then $\mathrm{F}, \mathrm{F}^*$ and $ \mathrm{\tilde F}$ are functorials.
As was mentioned in \cite[p. 240]{Plotkin1973} it is rather important to study functorials that contain their centralizer.

In this paper we will use the concept of functorial to study the generalizations of the Fitting subgroup and their applications in the structural study of groups.


\section{The algebra of Fitting like functorials}


Recall \cite{Plotkin1973} some definitions about functorials.
Let $\gamma_1$ and $\gamma_2$ be functorials.
 The upper product $\gamma_1\star\gamma_2$ of $\gamma_1$ and $\gamma_2$ is defined by $(\gamma_1\star\gamma_2)(G)/\gamma_1(G)=\gamma_2(G/\gamma_1(G))$ and the lower product $\gamma_1\circ\gamma_2$ of $\gamma_1$ and $\gamma_2$ is defined by $(\gamma_1\circ\gamma_2)(G)=\gamma_2(\gamma_1(G))$;    functorial $\gamma$ is called (lower) \emph{idempotent} if $\gamma(\gamma(G))=\gamma(G)$ for every group $G$. For a functorial $\gamma$ the following subgroups are defined

\begin{center}
$\gamma^{(1)}(G)=\gamma(G)$, $\gamma^{(i+1)}(G)=\gamma(\gamma^{(i)}(G))$ and $\gamma^\infty(G)=\cap_{i\in\mathbb{N}}\gamma^{(i)}(G)$.
\end{center}

Note that $(\mathrm{F}^*)^\infty=\mathrm{F}^*$. According to \cite{Foerster1984} $\mathrm{\tilde F}^\infty\neq\mathrm{\tilde F}$.

Let $\{\gamma_i\mid i\in I\}$  be a set of functorials. Then $(\bigwedge_{i\in I}\gamma_i)(G)=\bigcap_{i\in I}\gamma_i(G)$ and $(\bigvee_{i\in I}\gamma_i)(G)=\langle\gamma_i(G)\mid i\in I\rangle$ are functorials.

Analyzing the properties of the Fitting subgroup of a soluble group and its generalizations in the class of all groups we introduce the following

\begin{definition}
Let $\mathfrak{X}$ be a normally hereditary homomorph.   We shall call a   functorial  $\gamma$ an $\mathbb{F}$-functorial in  $\mathfrak{X}$ if for every $\mathfrak{X}$-group $G$   it   satisfies:

$(F1)$ $f(\gamma(G))\subseteq \gamma(f(G))$ for every epimorphism $f: G\to G^*$.

$(F2)$ $\gamma(N)\subseteq \gamma(G)$ for every $N\trianglelefteq G$.

$(F3)$ $C_G(\gamma(G))\subseteq\gamma(G)$.

$(F4)$ $\gamma(G)/\Phi(G)\subseteq \mathrm{Soc}(G/\Phi(G))$.
       \end{definition}

If $\mathfrak{X}$ is the class of all groups, then  an $\mathbb{F}$-functorial in $\mathfrak{X}$ will be called an $\mathbb{F}$-functorial. Note that an $\mathbb{F}$-functorial is a Plotkin radical iff it is idempotent.

\begin{definition}
  We shall call a functorial $\varphi$ a Frattini functorial if it satisfies $(F1)$,   $(F2)$     and $\varphi(G)\subseteq\Phi(G)$.
\end{definition}

The main result of this section is

\begin{theorem}\label{lattice}
Let $ \mathcal{R}$ be the set of all $\mathbb{F}$-functorials.

$(a)$ $(\mathcal{R}, \vee, \wedge)$   is a complete distributive lattice,
  $\mathrm{F}^*$ and $\mathrm{\tilde{F}}$    are its the smallest and the largest  elements respectively.

$(b)$  $(\mathcal{R}, \circ)$ is a semigroup, $\mathrm{F}^*$ and  $\mathrm{\tilde{F}}^\infty$ are its zero  and  the largest idempotent elements respectively.

$(c)$ If $\gamma$ and $\varphi$ are an $\mathbb{F}$-functorial and a Frattini functorial respectively, then $\varphi\star\gamma$ is an $\mathbb{F}$-functorial and
    $\varphi\star\mathrm{\tilde F}=\mathrm{\tilde F}$.

$(d)$ The cardinality of $\mathcal{R}$ is continuum.
\end{theorem}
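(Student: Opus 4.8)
The plan rests on one reformulation: an $\mathbb{F}$-functorial is precisely a functorial $\gamma$ satisfying $(F1)$, $(F2)$ and, pointwise, $\mathrm{F}^*(G)\subseteq\gamma(G)\subseteq\mathrm{\tilde F}(G)$. The inclusion $\gamma\subseteq\mathrm{\tilde F}$ is immediate from $(F4)$ together with $\mathrm{\tilde F}(G)/\Phi(G)=\mathrm{Soc}(G/\Phi(G))$. For $\mathrm{F}^*\subseteq\gamma$ I would argue by induction on $|G|$. First, $(F3)$ forces $\gamma(H)=H$ for abelian $H$ (then $C_H(\gamma(H))=H\subseteq\gamma(H)$), hence, reducing through maximal subgroups by $(F2)$, for every nilpotent $H$, and likewise for every quasisimple $H$ (whose proper characteristic subgroups are central and so excluded by $(F3)$). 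Writing $\mathrm{F}^*(G)$ as the product of $\mathrm{F}(G)$ and the components of $G$ and applying $(F2)$ along the (sub)normal series joining these to $G$, we obtain $\mathrm{F}^*(G)\subseteq\gamma(G)$. Conversely, if $\gamma$ is sandwiched then $C_G(\gamma(G))\subseteq C_G(\mathrm{F}^*(G))\subseteq\mathrm{F}^*(G)\subseteq\gamma(G)$ gives $(F3)$, and $(F4)$ is clear. This characterization yields the extremal part of $(a)$.

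For the remaining lattice claims in $(a)$ I compute the operations pointwise. Each $\gamma_i$ satisfies $(F1),(F2)$, hence so do $\bigwedge_i\gamma_i$ and $\bigvee_i\gamma_i$ (image and preimage arguments). The only delicate axiom is $(F3)$ for a meet, but $\bigcap_i\gamma_i(G)\supseteq\mathrm{F}^*(G)$, so $C_G(\bigcap_i\gamma_i(G))\subseteq C_G(\mathrm{F}^*(G))\subseteq\mathrm{F}^*(G)\subseteq\bigcap_i\gamma_i(G)$; a join lies under $\mathrm{\tilde F}$ because $\mathrm{Soc}(G/\Phi(G))$ is a subgroup. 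Thus $(\mathcal{R},\vee,\wedge)$ is complete. For $(b)$, composition is associative and $(\gamma_1\circ\gamma_2)(G)=\gamma_2(\gamma_1(G))$ stays in $\mathcal{R}$ since $\mathrm{F}^*(G)\subseteq\mathrm{F}^*(\gamma_1(G))\subseteq\gamma_2(\gamma_1(G))$ and $\mathrm{\tilde F}(\gamma_1(G))\subseteq\mathrm{\tilde F}(G)$, with $(F1),(F2)$ following by a routine chase. Using $\mathrm{F}^*(N)=\mathrm{F}^*(G)\cap N$ for $N\trianglelefteq G$ and the fact that $\gamma$ fixes quasinilpotent groups, one gets $\gamma\circ\mathrm{F}^*=\mathrm{F}^*\circ\gamma=\mathrm{F}^*$, so $\mathrm{F}^*$ is a zero. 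Finally $\mathrm{\tilde F}^\infty$ is idempotent (the descending chain $\mathrm{\tilde F}^{(i)}(G)$ stabilises), and if $\delta$ is idempotent then $\delta=\delta^{(i)}\subseteq\mathrm{\tilde F}^{(i)}$ for all $i$, whence $\delta\subseteq\mathrm{\tilde F}^\infty$.

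Part $(c)$ is a computation with the Frattini functorial. Writing $K=\varphi(G)\subseteq\Phi(G)$, the key identity is $\Phi(G/K)=\Phi(G)/K$, which with $\mathrm{\tilde F}(G)/\Phi(G)=\mathrm{Soc}(G/\Phi(G))$ yields $\mathrm{\tilde F}(G/K)=\mathrm{\tilde F}(G)/K$; hence $(\varphi\star\mathrm{\tilde F})(G)=\mathrm{\tilde F}(G)$. For a general $\gamma$ the same identity gives $\mathrm{F}^*(G)\subseteq(\varphi\star\gamma)(G)\subseteq\mathrm{\tilde F}(G)$, while $(F1),(F2)$ for the upper product $\varphi\star\gamma$ follow from those of $\varphi$ and $\gamma$ through the correspondence theorem (using $\varphi(G)\subseteq\Phi(G)\subseteq\gamma(G)$); so $\varphi\star\gamma\in\mathcal{R}$.

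The heart is the distributivity in $(a)$ and the cardinality in $(d)$. Since $\mathrm{F}^*(G)=\mathrm{Soc}(G)=\mathrm{\tilde F}(G)$ whenever $\Phi(G)=1$, all $\mathbb{F}$-functorials agree on Frattini-free groups, and by Proposition~\ref{prop0}$(4)$ every abelian chief factor below $\mathrm{Soc}(G/\Phi(G))$ already lies in $\mathrm{F}(G)\Phi(G)/\Phi(G)$; thus the entire variation of $\gamma$ between $\mathrm{F}^*$ and $\mathrm{\tilde F}$ is carried by the nonabelian chief factors of $G/\Phi(G)$ not covered by $\mathrm{F}^*(G)$. I would assign to each $\gamma$ the set of isomorphism types of such uncovered factors that it contains and show, via $(F1),(F2)$ and characteristicity, that $\gamma$ must include such a factor all-or-nothing according to its type; this realises $\mathcal{R}$ as a sublattice of a power-set lattice $\mathcal{P}(\Omega)$ over a countable set of types, meets and joins going to intersections and unions, which is distributive. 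For $(d)$ each nonabelian simple $S$ supplies a witness: a non-split extension $G_S$ with normal subgroup $A=\Phi(G_S)=\mathrm{F}^*(G_S)$ and $G_S/A\cong S$, on which $\mathrm{\tilde F}(G_S)=G_S$ and a functorial must choose $\gamma(G_S)\in\{A,G_S\}$; as $S$ ranges over the countably many simple types these choices are independent, giving $2^{\aleph_0}$ distinct elements, while the crude bound (countably many finite groups, each with finitely many characteristic subgroups) gives $|\mathcal{R}|\le 2^{\aleph_0}$. The main obstacle is exactly this rigidity step: proving that functoriality eliminates the diagonal normal subgroups among isomorphic nonabelian chief factors and pins each $\gamma$ down to a type-selection, which is what simultaneously forces distributivity and the continuum count.
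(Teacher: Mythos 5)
Your reduction of the axioms to ``$(F1)$, $(F2)$ and $\mathrm{F}^*(G)\subseteq\gamma(G)\subseteq\mathrm{\tilde F}(G)$'', the treatment of meets, joins, completeness, the semigroup $(\mathcal{R},\circ)$ with zero $\mathrm{F}^*$ and largest idempotent $\mathrm{\tilde F}^\infty$, part $(c)$, and the upper bound $|\mathcal{R}|\le 2^{\aleph_0}$ all match the paper's arguments and are sound. The genuine gap is the step you yourself flag as ``the heart'': the claim that a functorial must include an uncovered nonabelian chief factor \emph{all-or-nothing according to its isomorphism type}, realising $\mathcal{R}$ inside $\mathcal{P}(\Omega)$ for a countable set $\Omega$ of types. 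This rigidity statement is false, and the paper's own distinguishing family witnesses its failure: for a set of primes $\pi$ the functorial $\Phi_\pi\star\mathrm{F}^*$ (with $\Phi_\pi(G)=\mathrm{O}_\pi(\Phi(G))$) decides whether to absorb the top factor of a Frattini extension $A\rightarrowtail E\twoheadrightarrow S$ according to the prime $p$ of the faithful module $A=\Phi(E)$, not according to the isomorphism type of $E/\Phi(E)\simeq S$; two Frattini extensions of the same simple group over different primes are treated differently by the same $\mathbb{F}$-functorial. Fortunately distributivity does not need any global classification: it is a pointwise identity, so it suffices to fix one group $G$ and embed $L(G)=\{\gamma(G)\mid\gamma\in\mathcal{R}\}$ into the finite power set $\mathcal{P}(\{1,\dots,n\})$ indexed by the minimal normal subgroups $\overline{N}_1,\dots,\overline{N}_n$ of $G/\mathrm{F}^*(G)$ composing its nonabelian socle, using exactly the ``no diagonals'' fact you mention (every $G$-invariant subgroup of that socle is a subproduct of the $\overline{N}_i$). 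That is the paper's route, and your argument should retreat to it.

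The same overreach undermines your lower bound in $(d)$: you assert that the binary choices $\gamma(G_S)\in\{A,G_S\}$ over the countably many simple groups $S$ ``are independent'', but exhibiting, for each subset of choices, an actual functorial satisfying $(F1)$--$(F4)$ on \emph{all} groups simultaneously is precisely the hard content, and nothing in your plan produces it (a naive ``take the factors of selected type'' definition runs into $(F1)$ and $(F2)$ under quotients and normal embeddings, and in any case would contradict the failure of type-rigidity above). The paper instead gets continuum many functorials constructively: $\Phi_\pi\star\mathrm{F}^*$ is an $\mathbb{F}$-functorial for every $\pi\subseteq\mathbb{P}$ by part $(c)$, and for $p\in\pi_1\setminus\pi_2$ these are separated on the Griess--Schmid Frattini extension $E$ of the alternating group $\mathbb{A}_p$ with $\Phi(E)$ a faithful $p$-module, since $(\Phi_{\pi_1}\star\mathrm{F}^*)(E)=E\neq\Phi(E)=(\Phi_{\pi_2}\star\mathrm{F}^*)(E)$. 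You need either this construction or some other explicit continuum-sized family; as written, the independence claim is an unproved assertion on which the whole of $(d)$ rests.
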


\begin{corollary}\label{lattice11}
  Let $\gamma$ be an $\mathbb{F}$-functorial and  $\varphi\in\{\gamma^{(i)}\mid i\in\mathbb{N}\}$. If $\gamma$ is an $($idempotent$)$ $\mathbb{F}$-functorial, then $\varphi$ is also an $($idempotent$)$ $\mathbb{F}$-functorial.
\end{corollary}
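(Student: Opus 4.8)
The plan is to prove the two merged assertions of the corollary separately, each by a short induction on $i$ (writing $\varphi=\gamma^{(i)}$ for the fixed index with $\varphi\in\{\gamma^{(i)}\mid i\in\mathbb{N}\}$), the whole argument resting on the fact that $(\mathcal{R},\circ)$ is a semigroup, which is Theorem~\ref{lattice}(b). So the substantive content is assumed, and what remains is to recognize the iterates $\gamma^{(i)}$ as elements of this semigroup.

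First I would treat the non-idempotent statement: if $\gamma\in\mathcal{R}$ then $\gamma^{(i)}\in\mathcal{R}$ for every $i\in\mathbb{N}$. The base case $i=1$ is just the hypothesis $\gamma^{(1)}=\gamma\in\mathcal{R}$. For the inductive step I would unwind the two relevant definitions: the iterate satisfies $\gamma^{(i+1)}(G)=\gamma(\gamma^{(i)}(G))$, while the lower product is $(\gamma_1\circ\gamma_2)(G)=\gamma_2(\gamma_1(G))$. Matching these with $\gamma_1=\gamma^{(i)}$ and $\gamma_2=\gamma$ gives the identity $\gamma^{(i+1)}=\gamma^{(i)}\circ\gamma$. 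Under the inductive hypothesis $\gamma^{(i)}\in\mathcal{R}$, and since $\gamma\in\mathcal{R}$, the closure of $\mathcal{R}$ under $\circ$ (which is part of $(\mathcal{R},\circ)$ being a semigroup) yields $\gamma^{(i+1)}\in\mathcal{R}$. Equivalently, by associativity of $\circ$, the functorial $\gamma^{(i)}$ is precisely the $i$-fold $\circ$-power of $\gamma$, and hence lies in $\mathcal{R}$.

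For the idempotent refinement, note that idempotency of $\gamma$ means exactly $\gamma^{(2)}(G)=\gamma(\gamma(G))=\gamma(G)$, that is $\gamma^{(2)}=\gamma$; a trivial induction then gives $\gamma^{(i)}=\gamma$ for all $i$. Thus the set $\{\gamma^{(i)}\mid i\in\mathbb{N}\}$ collapses to $\{\gamma\}$, so $\varphi=\gamma$ is an idempotent $\mathbb{F}$-functorial, and there is nothing further to check.

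I expect essentially no genuine obstacle here once Theorem~\ref{lattice}(b) is available: the entire point is the identification $\gamma^{(i+1)}=\gamma^{(i)}\circ\gamma$, after which closure of $\mathcal{R}$ under the lower product closes the induction. Should one prefer to avoid quoting part~(b), the alternative is to verify $(F1)$--$(F4)$ directly for $\gamma^{(i)}$; this, however, simply re-runs the same bookkeeping with epimorphisms and normal subgroups (and the centralizer condition $(F3)$) that is already packaged into the proof that $\circ$ preserves $\mathcal{R}$, so no additional difficulty is introduced and quoting Theorem~\ref{lattice}(b) is the efficient route.
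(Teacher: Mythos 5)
Your proof is correct and follows essentially the same route as the paper: the paper's own argument (step $(b.2)$ of the proof of Theorem \ref{lattice}) likewise deduces membership of $\gamma^{(i)}$ in $\mathcal{R}$ from the closure of $\mathcal{R}$ under the lower product established in $(b.1)$, and handles the idempotent case by the same collapse $\gamma=\gamma^{(1)}=\gamma^{(2)}=\dots$. Your explicit identification $\gamma^{(i+1)}=\gamma^{(i)}\circ\gamma$ and the induction merely spell out what the paper leaves implicit.
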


\begin{corollary}\label{lattice12}
  The cardinality of the set of all idempotent $\mathbb{F}$-functorials is continuum.
\end{corollary}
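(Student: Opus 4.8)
The plan is to prove the two cardinality inequalities separately. The upper bound is immediate: every idempotent $\mathbb{F}$-functorial is in particular an $\mathbb{F}$-functorial, so the set in question embeds into $\mathcal{R}$, and by Theorem~\ref{lattice}$(d)$ its cardinality is at most continuum. It therefore remains to exhibit continuum many pairwise distinct idempotent $\mathbb{F}$-functorials, and the whole difficulty is concentrated there.

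The first preparatory step is to set up an idempotent-closure operation $\gamma\mapsto\gamma^\infty$ on $\mathcal{R}$ and to check that it lands among the idempotents. By Corollary~\ref{lattice11} each iterate $\gamma^{(i)}$ again lies in $\mathcal{R}$, and since $(\mathcal{R},\vee,\wedge)$ is a complete lattice with $(\bigwedge_{i\in\mathbb{N}}\gamma^{(i)})(G)=\bigcap_{i\in\mathbb{N}}\gamma^{(i)}(G)=\gamma^\infty(G)$ by Theorem~\ref{lattice}$(a)$, the functorial $\gamma^\infty$ belongs to $\mathcal{R}$. For idempotency I would use finiteness of $G$: the descending chain $\gamma^{(1)}(G)\supseteq\gamma^{(2)}(G)\supseteq\cdots$ stabilises, say $\gamma^\infty(G)=\gamma^{(n)}(G)$, so $\gamma(\gamma^\infty(G))=\gamma^\infty(G)$; iterating gives $\gamma^{(i)}(\gamma^\infty(G))=\gamma^\infty(G)$ for every $i$, whence $\gamma^\infty(\gamma^\infty(G))=\gamma^\infty(G)$. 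Thus $\gamma\mapsto\gamma^\infty$ retracts $\mathcal{R}$ onto the set of idempotent $\mathbb{F}$-functorials, which by Theorem~\ref{lattice}$(b)$ has $\mathrm{F}^*$ and $\mathrm{\tilde F}^\infty$ as its least and greatest members. Any continuum family of idempotents will automatically satisfy $(F1)$--$(F4)$ for free, simply by membership in $\mathcal{R}$.

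For the lower bound I would revisit the continuum family underlying Theorem~\ref{lattice}$(d)$, indexed (as is typical) by subsets $\pi$ of an infinite set of primes, and arrange that its members be idempotent. Concretely, I would choose the groups $G_\pi$ witnessing that two members $\gamma_{\pi_1}\neq\gamma_{\pi_2}$ disagree to be $\gamma$-stable, i.e.\ groups on which the relevant chief factors are not hidden by the Frattini subgroup, so that $\gamma(G_\pi)$ is already a fixed point of $\gamma$ and therefore $\gamma^\infty(G_\pi)=\gamma(G_\pi)$ there. Applying the closure of the previous paragraph to the whole family then yields continuum many idempotent $\mathbb{F}$-functorials $\gamma_\pi^\infty$ that remain pairwise distinct, because they are separated on the stable witnesses; combined with the upper bound this gives the assertion. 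The main obstacle is exactly this last distinctness point: the map $\gamma\mapsto\gamma^\infty$ can in principle collapse many functorials to one idempotent (it pushes the interval below $\mathrm{\tilde F}$ toward $\mathrm{\tilde F}^\infty$), so the delicate part is to produce a continuum family whose separating behaviour survives the passage to $\gamma^\infty$. Guaranteeing this amounts to realising the $\pi$-dependent part of each $\gamma_\pi$ on groups where no iteration of $\gamma$ can erase it, and that careful choice of witnessing groups is where the genuine work lies.
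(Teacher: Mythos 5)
Your overall strategy is exactly the paper's: bound the set above by $\mathcal{R}$ via Theorem~\ref{lattice}$(d)$, pass from the continuum family $\{\Phi_\pi\star\mathrm{F}^*\}$ to the idempotents $\{(\Phi_\pi\star\mathrm{F}^*)^\infty\}$ using Proposition~\ref{FF}$(4)$ and Corollary~\ref{lattice11}, and then argue that distinctness survives the closure. The gap is that you stop precisely at the step that carries the content of the corollary: you never actually verify that the separation survives, instead declaring that ``the genuine work lies'' in choosing suitable witnesses. A proof cannot end with a description of the difficulty; as written, the lower bound is not established. Moreover, your proposed criterion for stability --- groups ``on which the relevant chief factors are not hidden by the Frattini subgroup'' --- points in the wrong direction for this family: in the witness $E$ of step $(d.2)$ the relevant chief factors lie \emph{inside} $\Phi(E)$ by construction (that is the whole point of the Frattini extension $A\rightarrowtail E\twoheadrightarrow \mathbb{A}_p$), and the separation comes from $\mathrm{O}_{\pi_1}(\Phi(E))=\Phi(E)$ versus $\mathrm{O}_{\pi_2}(\Phi(E))=1$.

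The missing verification is in fact immediate with the witnesses already built in $(d.2)$, and no new ``careful choice'' is needed. For $p\in\pi_1\setminus\pi_2$ one has $(\Phi_{\pi_1}\star\mathrm{F}^*)(E)=E$, which is trivially a fixed point, so $(\Phi_{\pi_1}\star\mathrm{F}^*)^\infty(E)=E$. On the other side $(\Phi_{\pi_2}\star\mathrm{F}^*)(E)=\Phi(E)$ is a $p$-group, hence quasinilpotent, hence equal to its own $\mathrm{F}^*$; since every $\mathbb{F}$-functorial contains $\mathrm{F}^*$ by Proposition~\ref{FF}$(2)$, the subgroup $\Phi(E)$ is a fixed point of $\Phi_{\pi_2}\star\mathrm{F}^*$ and therefore $(\Phi_{\pi_2}\star\mathrm{F}^*)^\infty(E)=\Phi(E)\neq E$. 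Thus the map $\pi\mapsto(\Phi_\pi\star\mathrm{F}^*)^\infty$ is injective on $\mathcal{P}(\mathbb{P})$ and the lower bound follows. Adding these two sentences would complete your argument and make it coincide with the paper's proof.
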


\begin{theorem}\label{lattice0}
 $\mathrm{F}$ is the unique $\mathbb{F}$-functorial in the class of all soluble groups.  \end{theorem}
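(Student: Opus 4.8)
The plan is to prove the statement in two parts: first that $\mathrm{F}$ is indeed an $\mathbb{F}$-functorial in the class $\mathfrak{S}$ of all soluble groups, and then that it is the only one, i.e. that every $\mathbb{F}$-functorial $\gamma$ in $\mathfrak{S}$ satisfies $\gamma(G)=\mathrm{F}(G)$ for all soluble $G$. Existence is immediate from Proposition~\ref{prop0} (recall that $\mathrm{F}$ is already a functorial): $(F1)$ is part $(1)$, $(F2)$ is part $(2)$, $(F3)$ is part $(3)$ (valid since every $G\in\mathfrak{S}$ is soluble), and $(F4)$ follows from part $(4)$ because in a soluble group every chief factor is abelian, so $\mathrm{ASoc}=\mathrm{Soc}$ and hence $\mathrm{F}(G)/\Phi(G)=\mathrm{Soc}(G/\Phi(G))$. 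For uniqueness I would establish the two inclusions $\mathrm{F}(G)\subseteq\gamma(G)$ and $\gamma(G)\subseteq\mathrm{F}(G)$ separately.

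For the lower inclusion I would first reduce to nilpotent groups: since $\mathrm{F}(G)\trianglelefteq G$ is nilpotent, $(F2)$ gives $\gamma(\mathrm{F}(G))\subseteq\gamma(G)$, so it suffices to show that $\gamma(N)=N$ for every nilpotent $N$, for then $\mathrm{F}(G)=\gamma(\mathrm{F}(G))\subseteq\gamma(G)$. This I would prove by taking a counterexample $N$ of least order. Writing $N$ as the direct product of its Sylow subgroups and applying $(F2)$ together with minimality to each (characteristic, hence normal) factor, one is reduced to the case where $N=P$ is a $p$-group with $\gamma(P)<P$. If $P$ is abelian, then $(F3)$ forces $P=C_P(\gamma(P))\subseteq\gamma(P)$, a contradiction; if $P$ is non-abelian it is non-cyclic, hence it has two distinct maximal subgroups $M_1\neq M_2$, which are normal of index $p$ and therefore proper. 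By minimality $\gamma(M_i)=M_i$, so $(F2)$ yields $M_1,M_2\subseteq\gamma(P)$, and since $\langle M_1,M_2\rangle=P$ we obtain $\gamma(P)=P$, again a contradiction.

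For the upper inclusion I would use $(F4)$. The lower inclusion already gives $\Phi(G)\subseteq\mathrm{F}(G)\subseteq\gamma(G)$, so $\gamma(G)/\Phi(G)$ is defined and, by $(F4)$, contained in $\mathrm{Soc}(G/\Phi(G))$. For soluble $G$ the quotient $G/\Phi(G)$ is again soluble with trivial Frattini subgroup, so by Proposition~\ref{prop0}(4) its socle equals $\mathrm{F}(G)/\Phi(G)$; hence $\gamma(G)/\Phi(G)\subseteq\mathrm{F}(G)/\Phi(G)$, whence $\gamma(G)\subseteq\mathrm{F}(G)$. Combining the two inclusions gives $\gamma=\mathrm{F}$ on $\mathfrak{S}$.

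I expect the genuine obstacle to be the nilpotent step, and more precisely the $p$-group case. The conditions $(F3)$ and $(F4)$ alone do not pin down $\gamma(P)$: a proper self-centralizing characteristic subgroup, such as the cyclic maximal subgroup of the dihedral group of order $8$, satisfies both the centralizer condition and the Frattini condition, so it cannot be excluded by looking inside $P$ only. The place where the argument really bites is therefore the use of the functoriality axiom $(F2)$ on the proper normal maximal subgroups together with the induction on $|P|$, relying on the elementary fact that a non-cyclic $p$-group is generated by any two of its distinct maximal subgroups. I would make sure this reduction is airtight before carrying out the routine verifications.
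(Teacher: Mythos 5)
Your proof is correct and takes essentially the same route as the paper: the paper simply notes that $\mathrm{F}^*(G)=\mathrm{\tilde F}(G)=\mathrm{F}(G)$ for soluble $G$ and invokes parts $(2)$ and $(3)$ of Proposition~\ref{FF}, whose proofs you have in effect reproduced in the soluble setting (the minimal-counterexample $p$-group argument via $(F2)$ and $(F3)$ for the lower inclusion, and the $\mathrm{Soc}(G/\Phi(G))$ argument via $(F4)$ for the upper one). Your only addition is the explicit check that $\mathrm{F}$ is itself an $\mathbb{F}$-functorial on soluble groups, which the paper leaves to Proposition~\ref{prop0}.
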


We need the following propositions in our proves.

\begin{proposition}\label{FF}
Let $\gamma$ be a functorial.

$(1)$ If $\gamma$  satisfies $(F1)$ and $(F2)$, then $\gamma(G_1\times G_2)=\gamma(G_1)\times \gamma(G_2)$ for every groups $G_1$\,and\,$G_2$.

$(2)$ If $\gamma$ satisfies $(F2)$ and $(F3)$, then $\mathrm{F}^*(G)\subseteq\gamma(G)$.

$(3)$  If $\gamma$ satisfies $(F4)$, then  $\gamma(G)\leq\mathrm{\tilde{F}}(G)$ for every group $G$.

$(4)$  $\gamma^\infty$ is idempotent.
     \end{proposition}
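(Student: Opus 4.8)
I would dispatch parts $(1)$, $(3)$ and $(4)$ quickly and reserve the real work for $(2)$. For $(1)$, I would prove the two inclusions separately. For ``$\subseteq$'' I would apply $(F1)$ to the coordinate projections $\pi_i\colon G_1\times G_2\to G_i$, which are epimorphisms, getting $\pi_i(\gamma(G_1\times G_2))\subseteq\gamma(G_i)$; since $\gamma(G_1\times G_2)\subseteq\pi_1(\gamma(G_1\times G_2))\times\pi_2(\gamma(G_1\times G_2))$, this yields $\gamma(G_1\times G_2)\subseteq\gamma(G_1)\times\gamma(G_2)$. For ``$\supseteq$'' I would use that $G_i\times 1\trianglelefteq G_1\times G_2$ and that $\gamma(G_i\times 1)=\gamma(G_i)\times 1$ (as $\gamma$ commutes with the isomorphism $G_i\cong G_i\times 1$); then $(F2)$ gives $\gamma(G_i)\times 1\subseteq\gamma(G_1\times G_2)$ for $i=1,2$, and these two subgroups generate $\gamma(G_1)\times\gamma(G_2)$. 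Part $(3)$ is immediate from the definitions: $(F4)$ gives $\gamma(G)\Phi(G)/\Phi(G)\subseteq\mathrm{Soc}(G/\Phi(G))=\tilde{\mathrm F}(G)/\Phi(G)$, whence $\gamma(G)\subseteq\gamma(G)\Phi(G)\subseteq\tilde{\mathrm F}(G)$. For $(4)$ I would observe that the chain $\gamma^{(1)}(G)\supseteq\gamma^{(2)}(G)\supseteq\cdots$ is descending, because $\gamma(H)$ is a (characteristic, hence) subgroup of $H$ for every $H$, so $\gamma^{(i+1)}(G)=\gamma(\gamma^{(i)}(G))\subseteq\gamma^{(i)}(G)$; finiteness gives $n$ with $\gamma^{(n)}(G)=\gamma^\infty(G)$ and $\gamma(\gamma^\infty(G))=\gamma^\infty(G)$. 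Writing $H=\gamma^\infty(G)$, one then has $\gamma^{(i)}(H)=H$ for all $i$, so $\gamma^\infty(H)=H$, i.e.\ $\gamma^\infty(\gamma^\infty(G))=\gamma^\infty(G)$.

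The substantial claim is $(2)$, which I would prove by induction on $|G|$. Put $D=\gamma(G)$; by $(F3)$ we have $C_G(D)\le D$. The key use of $(F2)$ is this: for every proper normal subgroup $K\trianglelefteq G$ the induction hypothesis (applicable since $\gamma$ satisfies $(F2)$ and $(F3)$ on all groups and $|K|<|G|$) together with $(F2)$ gives $\mathrm{F}^*(K)\subseteq\gamma(K)\subseteq D$. If $\mathrm{F}^*(G)\neq G$, I would take $K=\mathrm{F}^*(G)$; since $\mathrm{F}^*$ is idempotent (the excerpt notes $(\mathrm{F}^*)^\infty=\mathrm{F}^*$, and $\mathrm{F}^*(G)$ is quasinilpotent), $\mathrm{F}^*(K)=\mathrm{F}^*(G)\subseteq D$ and we are done.

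It remains to treat the case $G=\mathrm{F}^*(G)$, i.e.\ $G$ quasinilpotent. Here every proper normal subgroup $K$ is again quasinilpotent, so $K=\mathrm{F}^*(K)\subseteq D$; thus $D$ contains \emph{all} proper normal subgroups of $G$. If $G$ has two distinct maximal normal subgroups $M_1\neq M_2$, then $M_1M_2=G$ and both lie in $D$, forcing $D=G$. If $G$ has a unique maximal normal subgroup $M$, then $D$ (being normal) is either $G$ or contained in $M$; in the latter case $M\subseteq D$ forces $D=M$, and $(F3)$ gives $C_G(M)\le M$. I would then invoke the structural fact that a quasinilpotent group with a unique maximal normal subgroup is a cyclic $p$-group or a quasisimple group, and check directly that in each such group $C_G(M)\not\le M$ (for a cyclic $p$-group $M$ is the maximal subgroup and $C_G(M)=G$; for quasisimple $G$ one has $M=Z(G)$ or $M=1$ and again $C_G(M)=G$). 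This contradiction forces $D=G=\mathrm{F}^*(G)$.

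The main obstacle is precisely this last quasinilpotent case, and within it the structural classification of quasinilpotent groups with a unique maximal normal subgroup. I would establish it from the central decomposition $\mathrm{F}^*(G)=\mathrm{F}(G)E(G)$: each of the (necessarily normal) components contributes a distinct non-abelian simple quotient, while the abelianization $G/G'$ contributes the abelian simple quotients, so having a single maximal normal subgroup forces either no components and $G/G'$ cyclic of prime power order (hence $G$ cyclic $p$-group) or a single component with trivial nilpotent part (hence $G$ quasisimple). The delicate point to get right is that distinct components yield genuinely distinct maximal normal subgroups, which is what rules out everything except the two surviving configurations where $(F3)$ fails; note that $(F3)$ alone is not enough here, since in non-monolithic groups such as $D_8$ a proper characteristic subgroup (e.g.\ the cyclic subgroup of order $4$) can be self-centralizing, and it is exactly $(F2)$ that propagates $\gamma$ up to all of $\mathrm{F}^*(G)$.
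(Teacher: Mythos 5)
Your proposal is correct. Parts $(1)$, $(3)$ and $(4)$ coincide with the paper's argument up to cosmetic differences (the paper realises your projections $\pi_i$ as the quotient maps modulo $G_{\bar i}$ and intersects the two resulting bounds; the descending-chain argument for $(4)$ is identical). Part $(2)$ is where you genuinely diverge. The paper works bottom-up: it first proves $\gamma(P)=P$ for every $p$-group $P$ by a minimal counterexample (all maximal subgroups are normal, hence lie in $\gamma(P)$ by $(F2)$; two of them generate $P$, and the remaining cyclic case is killed by $(F3)$), deduces $\mathrm{F}(G)\subseteq\gamma(G)$ from $\mathrm{F}(G)=\times_p\mathrm{O}_p(G)$, then treats a quasinilpotent $G$ by decomposing $G/\mathrm{F}(G)$ into simple factors $H/\mathrm{F}(G)$ and using the structure $H=D\mathrm{F}(H)$, $[D,\mathrm{F}(H)]=1$ from \cite[X, Theorem 13.8]{19} to force $\gamma(H)=H$ via $(F3)$, and finally transfers to arbitrary $G$ through $\mathrm{F}^*(G)=\gamma(\mathrm{F}^*(G))\subseteq\gamma(G)$. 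You instead run a single induction on $|G|$ that collapses immediately to the case $G=\mathrm{F}^*(G)$, observe that every proper normal subgroup of a quasinilpotent group is quasinilpotent and hence lands in $\gamma(G)$, and split on the number of maximal normal subgroups, invoking the classification of quasinilpotent groups with a unique maximal normal subgroup as cyclic $p$-groups or quasisimple groups. That classification is the one piece you must actually prove, and your sketch via the central decomposition $\mathrm{F}^*(G)=\mathrm{F}(G)E(G)$ is sound; note, though, that it rests on the same Theorem 13.8 the paper uses, so neither route is more elementary -- yours buys a cleaner global structure (one induction, one case split) at the price of an extra structural lemma, while the paper's buys self-containedness at the price of the explicit three-stage build-up. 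Your closing observation that $(F3)$ alone cannot suffice (a self-centralizing proper characteristic subgroup exists already in $D_8$) correctly isolates why $(F2)$ is doing real work in both proofs.
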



\begin{proof}
 $(1)$ From $G_i\trianglelefteq G_1\times G_2$ it follows that $\gamma(G_i)\subseteq \gamma(G_1\times G_2)$ by $(F2)$ for $i\in\{1, 2\}$.
Note that $\gamma(G_1\times G_2)G_i/G_i\subseteq \gamma((G_1\times G_2)/G_i)=(\gamma(G_{\bar i})\times G_i)/G_i$ by $(F1)$ for $i\in\{1, 2\}$. Now
    $$\gamma(G_1\times G_2)\subseteq (\gamma(G_1\times G_2)G_1)\cap (\gamma(G_1\times G_2)G_2)\subseteq (\gamma(G_1)\times G_2)\cap (G_1\times\gamma(G_2))=\gamma(G_1)\times \gamma(G_2). $$
    Thus $\gamma(G_1\times G_2)=\gamma(G_1)\times \gamma(G_2)$.

 $(2)$  Note that $\gamma(1)=1$. Assume that there is a $p$-group $G$ with $\gamma(G)\neq\mathrm{F}^*(G)=G$. Suppose that $G$ is a minimal order group with this property. It means that $\gamma(G)<G$ and $\gamma(M)=\mathrm{F}^*(M)=M$ for every maximal subgroup $M$ of $G$. Since all maximal subgroups of a $p$-group are normal, by $(F2)$ $\gamma(G)$ contains  all maximal subgroups of $G$. If $G$ has at least two maximal subgroups, then $\gamma(G)=G$, a contradiction with $\gamma(G)<G$. Thus $G$ has a unique maximal subgroup. It means that $G$ is cyclic. Now   $G=C_G(\gamma(G))\subseteq\gamma(G)<G$ by $(F3)$, the contradiction.  Thus  $\gamma(G)=\mathrm{F}^*(G)=G$ for every $p$-group $G$.

  From $\mathrm{O}_p(G)=\gamma(\mathrm{O}_p(G))\subseteq\gamma(G)$ by $(F2)$ and $\mathrm{F}(G)=\times_{p\in\pi(G)}\mathrm{O}_p(G)$ for every group $G$ it follows that $\mathrm{F}(G)\subseteq\gamma(G)$.

Let show that $\gamma(G)=G$ for every quasinilpotent group $G$. By \cite[X, Theorem 13.8]{19} $G/\mathrm{F}(G)$ is a direct product of simple non-abelian groups. Let $H/\mathrm{F}(G)$ be one of them. Note that $\mathrm{F}(G)=\mathrm{F}(H)$ and $H\trianglelefteq G$. So $H$ is quasinilpotent. Now $H=D\mathrm{F}(H)$ where $[D, \mathrm{F}(H)]=1$, $D\cap \mathrm{F}(H)=\mathrm{Z}(D)$ and $D/\mathrm{Z}(D)$ is a simple group by \cite[X, Theorem 13.8]{19}. It means that $\gamma(H)\in\{\mathrm{F}(H), H\}$. Assume that $\gamma(H)=\mathrm{F}(H)$. Now $D\subseteq C_H(\mathrm{F}(H))=C_H(\gamma(H))\subseteq \gamma(H)$ by $(F3)$. Hence $\gamma(H)=H$, a contradiction with  $\gamma(H)=\mathrm{F}(H)$. Thus $\gamma(H)=H$. Now $G$ is a product of normal subgroups $H=\gamma(H)$. It means that $G=\gamma(G)$ by $(F2)$.

Note that $\mathrm{F}^*(G)=\gamma(\mathrm{F}^*(G))\subseteq\gamma(G)$ by $(F2)$.

$(3)$ According to $(F4)$ $\gamma(G)/\Phi(G)\subseteq \mathrm{Soc}(G/\Phi(G))=\mathrm{\tilde{F}}(G)/\Phi(G)$. Hence $\gamma(G)\leq\mathrm{\tilde{F}}(G)$ for every group $G$.

$(4)$ Since $G$ is a finite group, there is $n\in\mathbb{N}$ such that $\gamma^{(n)}(G)=\gamma^{(m)}(G)$ for all $m\geq n$. From $\gamma^{(i+1)}(G)\leq\gamma^{(i)}(G)$ for all $i\in\mathbb{N}$ it follows that $\gamma^\infty(G)=\cap_{i\in\mathbb{N}}\gamma^{(i)}(G)=\gamma^{(n)}(G)$. Now $$\gamma^\infty(\gamma^\infty(G))=\cap_{i\in\mathbb{N}}\gamma^{(i)}(\gamma^\infty(G))=
  \cap_{i\in\mathbb{N}}\gamma^{(i)}(\gamma^{(n)}(G))=\cap_{i\in\mathbb{N}, i\geq n+1}\gamma^{(i)}(G)=\gamma^{(n)}(G)=\gamma^\infty(G).$$
  Thus $\gamma^\infty$ is idempotent.
\end{proof}

Recall that a functorial is called hereditary if it satisfies

$(F5)$  $\gamma(G)\cap N\subseteq\gamma(N)$ for every $N\trianglelefteq G$.

Note that if $\gamma$ satisfies $(F2)$ and $(F5)$, then  $\gamma(G)\cap N=\gamma(N)$ for every $N\trianglelefteq G$.

\begin{proposition}\label{length0}
  Let $\gamma_1$ and $\gamma_2$ be functorials. If $\gamma_1$ and $\gamma_2$  satisfy $(F1)$ and $(F2)$, then $\gamma_2\star\gamma_1$ satisfies $(F1)$ and $(F2)$. Moreover if $\gamma_1$ and $\gamma_2$ also satisfy $(F5)$, then  $\gamma_2\star\gamma_1$ satisfies $(F5)$.
\end{proposition}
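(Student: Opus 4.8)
The plan is to argue throughout from the explicit description of $\delta:=\gamma_2\star\gamma_1$. By definition $\delta(G)$ is the full preimage of $\gamma_1(G/\gamma_2(G))$ under the canonical projection $\pi_G\colon G\to G/\gamma_2(G)$; in particular $\gamma_2(G)=\ker\pi_G\subseteq\delta(G)$ and $\pi_G(\delta(G))=\gamma_1(G/\gamma_2(G))$. Each verification then reduces a statement about $\delta$ to the corresponding statement about $\gamma_1$ on a suitable quotient, after first using the relevant property of $\gamma_2$ to set up that quotient.

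For $(F1)$, I would start with an epimorphism $f\colon G\to G^*$. Since $\gamma_2$ satisfies $(F1)$ we have $f(\gamma_2(G))\subseteq\gamma_2(G^*)$, so $f$ induces an epimorphism $\bar f\colon G/\gamma_2(G)\to G^*/\gamma_2(G^*)$ with $\bar f\circ\pi_G=\pi_{G^*}\circ f$. Applying $(F1)$ for $\gamma_1$ to $\bar f$ gives $\bar f(\gamma_1(G/\gamma_2(G)))\subseteq\gamma_1(G^*/\gamma_2(G^*))$, that is $\pi_{G^*}(f(\delta(G)))\subseteq\pi_{G^*}(\delta(G^*))=\gamma_1(G^*/\gamma_2(G^*))$. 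As $\delta(G^*)$ is the full $\pi_{G^*}$-preimage of the latter group, this yields $f(\delta(G))\subseteq\delta(G^*)$.

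For $(F2)$, take $N\trianglelefteq G$ and pass to $\bar G=G/\gamma_2(G)$, writing $\bar N=N\gamma_2(G)/\gamma_2(G)\trianglelefteq\bar G$. Property $(F2)$ for $\gamma_2$ gives $\gamma_2(N)\subseteq\gamma_2(G)\cap N$, so by the second isomorphism theorem there is an epimorphism $q\colon N/\gamma_2(N)\to\bar N$ with $q\circ\pi_N=\pi_G|_N$, where $\pi_N\colon N\to N/\gamma_2(N)$. Applying $(F1)$ for $\gamma_1$ to $q$ and then $(F2)$ for $\gamma_1$ to $\bar N\trianglelefteq\bar G$, I obtain $\pi_G(\delta(N))=q(\gamma_1(N/\gamma_2(N)))\subseteq\gamma_1(\bar N)\subseteq\gamma_1(\bar G)$. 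Since $\delta(G)$ is the full $\pi_G$-preimage of $\gamma_1(\bar G)$, this gives $\delta(N)\subseteq\delta(G)$.

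The $(F5)$ claim is where the additional hypotheses really matter, and I expect it to be the main obstacle: the previous step produces only an epimorphism $q\colon N/\gamma_2(N)\to\bar N$, which is too weak to push an element of $\delta(G)\cap N$ back down into $\delta(N)$. Here I would invoke $(F5)$ together with $(F2)$ for $\gamma_2$, which by the remark preceding the proposition yields $\gamma_2(G)\cap N=\gamma_2(N)$; consequently $q$ becomes an isomorphism $q\colon N/\gamma_2(N)\xrightarrow{\ \sim\ }\bar N$ (still with $q\circ\pi_N=\pi_G|_N$). Then for $x\in\delta(G)\cap N$ we have $\pi_G(x)\in\gamma_1(\bar G)\cap\bar N$, which lies in $\gamma_1(\bar N)$ by $(F5)$ for $\gamma_1$. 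Transporting back along $q^{-1}$, and using that $\gamma_1$ respects the isomorphism $q$ (so $q(\gamma_1(N/\gamma_2(N)))=\gamma_1(\bar N)$), I get $\pi_N(x)=q^{-1}(\pi_G(x))\in\gamma_1(N/\gamma_2(N))$, i.e. $x\in\delta(N)$. Care is needed only in checking that all the projection squares commute, so that the passage back along $q^{-1}$ is legitimate.
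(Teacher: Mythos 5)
Your proposal is correct: all three verifications go through, and the overall strategy (unwind the definition of $\gamma_2\star\gamma_1$ as the full preimage of $\gamma_1(G/\gamma_2(G))$ and chase commuting projection squares) is the same as the paper's. The $(F1)$ step is essentially identical. For $(F2)$ you work entirely inside $G/\gamma_2(G)$, applying $(F1)$ of $\gamma_1$ to the epimorphism $N/\gamma_2(N)\to N\gamma_2(G)/\gamma_2(G)$ and then $(F2)$ of $\gamma_1$ to $\bar N\trianglelefteq \bar G$; the paper instead routes through the intermediate quotient $G/\gamma_2(N)$, applying $(F2)$ of $\gamma_1$ there and $(F1)$ of $\gamma_1$ to the map onto $G/\gamma_2(G)$ --- same ingredients, different order. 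The real divergence is in $(F5)$: you upgrade $q$ to an isomorphism via $\gamma_2(G)\cap N=\gamma_2(N)$, apply $(F5)$ of $\gamma_1$ directly to $\bar N\trianglelefteq\bar G$, and transport back along $q^{-1}$ (legitimate since functorials commute with isomorphisms). The paper never invokes $(F5)$ of the outer functorial in that direct way; it instead notes that $(N\gamma(G)\cap(\gamma_2\star\gamma_1)(G))/\gamma(G)$ is a normal subgroup of the ambient quotient contained in the value of the outer functorial, hence equal to its own value by $(F5)$, identifies it with a normal subgroup of $N/\gamma(N)$, and finishes with $(F2)$. Both arguments hinge on the same identity $\gamma(G)\cap N=\gamma(N)$; yours is slightly more direct and avoids the ``self-fixed subgroup'' detour (and, incidentally, is stated in the notation consistent with the paper's definition of $\star$, whereas the paper's part on $(F5)$ silently swaps the roles of $\gamma_1$ and $\gamma_2$, which is harmless since the hypotheses are symmetric).
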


\begin{proof}
  $(1)$  \emph{$\gamma_2\star\gamma_1$ satisfies $(F1)$}.

  Let $f: G\to f(G)$ be an epimorphism.  From $f(\gamma_2(G))\subseteq\gamma_2(f(G))$ it follows that the following diagram is commutative.
\[
\xymatrix{
G \ar[r]^{f} \ar[dr]^{f_4} \ar[d]^{f_1} & f(G) \ar[d]^{f_3} \\
  G/\gamma_2(G) \ar[r]^{f_2} & f(G)/\gamma_2(f(G))
}
\]
Let $X=\gamma_1(G/\gamma_2(G))$ and $Y=\gamma_1(f(G)/\gamma_2(f(G)))$. Note that $(\gamma_2\star\gamma_1)(G)=f_1^{-1}(X)$ and $(\gamma_2\star\gamma_1)(f(G))=f_3^{-1}(Y)$ by the definition of $\gamma_2\star\gamma_1$.
Since $\gamma_1$ satisfies $(F1)$, we see that $f_2(X)\subseteq Y$.  Hence $X\subseteq f_2^{-1}(Y)$. Now $(\gamma_2\star\gamma_1)(G)\subseteq   f_1^{-1}(f_2^{-1}(Y))=f_4^{-1}(Y)$. So  $$f((\gamma_2\star\gamma_1)(G))\subseteq   f(f_4^{-1}(Y))=f_3^{-1}(Y)=(\gamma_2\star\gamma_1)(f(G)).$$ Thus $\gamma_2\star\gamma_1$ satisfies $(F1)$.

$(2)$ \emph{$\gamma_2\star\gamma_1$ satisfies $(F2)$}.

    Let $N\trianglelefteq G$. From $\gamma_2(N)\textrm{ char }N\trianglelefteq G$ it follows that $\gamma_2(N)\trianglelefteq G$.  Since $\gamma_2$ satisfies $(F2)$, we see that $\gamma_2(N)\subseteq \gamma_2(G)$. So the following diagram is commutative.

\[
\xymatrix{
G \ar[r]^{f_1} \ar[dr]_{f_3}& G/\gamma_2(N) \ar[d]^{f_2} \\
 & G/\gamma_2(G)
}
\]
Let $X=\gamma_1(G/\gamma_2(N))$, $Y=\gamma_1(N/\gamma_2(N))$ and $Z=\gamma_1(G/\gamma_2(G))$. Then      $(\gamma_2\star\gamma_1)(G)=f_3^{-1}(Z)$ and $(\gamma_1\star\gamma_2)(N)\subseteq f_1^{-1}(Y)$. Since $\gamma_1$ satisfies $(F1)$ and $(F2)$, we see that $f_2(X)\subseteq Z$ and $Y\subseteq X$. Now
     $$(\gamma_2\star\gamma_1)(N)\subseteq f_1^{-1}(Y)\subseteq f_1^{-1}(X)\subseteq f_1^{-1}(f_2^{-1}(Z))=f_3^{-1}(Z)=(\gamma_2\star\gamma_1)(G).$$
Hence $\gamma_2\star\gamma_1$ satisfies $(F2)$.

$ (3)$ \emph{If $\gamma_1$ and $\gamma_2$ also satisfy $(F5)$, then  $\gamma_2\star\gamma_1$ satisfies $(F5)$.}

 Assume that $\gamma_1$ and $\gamma_2$  satisfy $(F2)$ and $(F5)$. Let $N\trianglelefteq G$.

Since $N\gamma_1(G)/\gamma_1(G)\cap(\gamma_2\star\gamma_1)(G)/\gamma_1(G)\trianglelefteq (\gamma_2\star\gamma_1)(G)/\gamma_1(G)=\gamma_2(G/\gamma_1(G))$, we see that $$\gamma_2((N\gamma_1(G)\cap (\gamma_2\star\gamma_1)(G))/\gamma_1(G))=(N\gamma_1(G)\cap (\gamma_2\star\gamma_1)(G))/\gamma_1(G).$$
Note that
\begin{multline*}
  (N\gamma_1(G)\cap (\gamma_2\star\gamma_1)(G))/\gamma_1(G)=\\
 (N\cap (\gamma_2\star\gamma_1)(G))\gamma_1(G)/\gamma_1(G)\simeq
 (N\cap (\gamma_2\star\gamma_1)(G))/(N\cap\gamma_1(G))\\
=(N\cap (\gamma_2\star\gamma_1)(G))/\gamma_1(N)\trianglelefteq N/\gamma_1(N).
\end{multline*}
It means that $(N\cap (\gamma_2\star\gamma_1)(G))/\gamma_1(N)\subseteq \gamma_2(N/\gamma_1(N))$. Thus  $N\cap (\gamma_2\star\gamma_1)(G)\subseteq  (\gamma_2\star\gamma_1)(N)$, i.e $\gamma_2\star\gamma_1$ satisfies $(F5)$.\end{proof}


\begin{proof}[Proof of Theorem \ref{lattice}]
Let prove the statement $(a)$.

$(a.1)$ \emph{$\mathrm{F}^*$ is the smallest $\mathbb{F}$-functorial and idempotent.}

According to $(2)$  of Proposition \ref{FF} every $\mathbb{F}$-functorial contains   $\mathrm{F}^*$. From   \cite[X, Lemma 13.3(a), Corollary 13.11, Theorem 13.12]{19} it follows that $\mathrm{F}^*$ is idempotent and satisfies $(F1), (F2)$ and $(F3)$. Now $ \mathrm{F}^*(G)/\Phi(G)\subseteq \mathrm{ F}^*(G/\Phi(G))=\mathrm{\tilde F}(G)/\Phi(G)=\mathrm{Soc}(G/\Phi(G))$. So   $\mathrm{F}^*$ satisfies $(F4)$. Hence it is the smallest $\mathbb{F}$-functorial and idempotent.

$(a.2)$  \emph{$\mathrm{\tilde F}$ is the largest  $\mathbb{F}$-functorial and  non-idempotent.}

 According to $(3)$ of Proposition \ref{FF} every $\mathbb{F}$-functorial is contained in    $\mathrm{\tilde F}$. From \cite[Lemmas 1.3 and 1.4]{Foerster1985} it follows that $\mathrm{\tilde{F}}$ satisfies $(F1)$ and $(F2)$. According \cite[Theorem 7.12]{f4}  $\mathrm{\tilde{F}}$ satisfies $(F3)$. Note that $\mathrm{\tilde{F}}$ satisfies $(F4)$ by its definition. Hence it is an $\mathbb{F}$-functorial. The following example was suggested by P. F\"orster \cite{Foerster1984} and shows that $\mathrm{\tilde{F}}$ is not idempotent:

 Let $E\simeq \mathbb{A}_5$ be the symmetric group of degree 5. There is an $\mathbb{F}_5E$-module $V$ such
 that $R=Rad(V)$ is a faithful irreducible $\mathbb{F}_5E$-module and $V/R$ is
 an irreducible trivial $\mathbb{F}_5E$-module.  Let $H=V\leftthreetimes E$. Then $H'=RE$ is a primitive group  and $|H:H'|=5$. Now $\Phi(H)=R$ by \cite[B, Lemma 3.14]{s8}.
 By \cite[B, Theorem 10.3]{s8} there is an irreducible $\mathbb{F}_7H$-module $W$   with $C_H(W)=H'$.
 Let $G=W\leftthreetimes H$. Note that $W$ is a minimal normal subgroup of $G$.  Now every maximal subgroup of $G$ either contains $W$ and a subgroup isomorphic to a maximal subgroup of $H$ or is isomorphic to $H$. Since $R\trianglelefteq G$, we see that it is contained in both types of maximal subgroups of $G$.
 Thus $\Phi(G)=\Phi(H)=R$. Note that $\mathrm{Soc}(G/R)=(WR/R)\times (ER/R)$. So $\tilde{\mathrm{F}}(G)=W\times ER$
 and $\Phi(\tilde{\mathrm{F}}(G))=1$. It means that
 $\tilde{\mathrm{F}}(\tilde{\mathrm{F}}(G))=\mathrm{Soc}(\tilde{\mathrm{F}}(G))=R\times W<\tilde{\mathrm{F}}(G)$.

The module from this example can be constructed in GAP by the following commands.

This command creates a permutation $\mathbb{F}_5E$-module of dimension $5$:

  $K:=PermutationGModule(AlternatingGroup(5), GaloisField(5));$

This command finds the socle of the previous module:

  $L:=MTX.BasisSocle(K);$

This command finds the quotient module of our permutation module by its socle:

  $V:=MTX.InducedActionFactorModule(K, L);$

We can check that this module is indecomposable:

  $MTX.IsIndecomposable(V);$

Now we can find all its composition factors of it.

  $MTX.CompositionFactors(V);$

It has two composition factors. The dimension of them are 3 and 1.

 $(a.3)$ \emph{Let $\{\gamma_i\mid i\in I\}$  be a set of $\mathbb{F}$-functorials. Then $(\bigwedge_{i\in I}\gamma_i)(G)=\bigcap_{i\in I}\gamma_i(G)$ and $(\bigvee_{i\in I}\gamma_i)(G)=\langle\gamma_i(G)\mid i\in I\rangle$ are $\mathbb{F}$-functorials. In particular $(\mathcal{R},\vee,\wedge)$ is a complete lattice.}

 Let $f: G\to G^*$ be an epimorphism. Since every element of $\{\gamma_i\mid i\in I\}$  satisfies $(F1)$,    \begin{center}
 $f((\bigwedge_{i\in I}\gamma_i)(G))=f(\bigcap_{i\in I}\gamma_i(G))\subseteq \bigcap_{i\in I}f(\gamma_i(G))\subseteq \bigcap_{i\in I}\gamma_i(f(G))=(\bigwedge_{i\in I}\gamma_i)(f(G))$ and $f((\bigvee_{i\in I}\gamma_i)(G))=f(\langle\gamma_i(G)\mid i\in I\rangle)=\langle f(\gamma_i(G))\mid i\in I\rangle
 \subseteq \langle\gamma_i(f(G))\mid i\in I\rangle=(\bigvee_{i\in I}\gamma_i)(f(G))$.
\end{center}
It means that $\bigwedge_{i\in I}\gamma_i$ and $\bigvee_{i\in I}\gamma_i$ satisfy $(F1).$

Let $N\trianglelefteq G$. Then by $(F2)$
  \begin{center}
 $(\bigwedge_{i\in I}\gamma_i)(N)=\bigcap_{i\in I}\gamma_i(N)\subseteq \bigcap_{i\in I}\gamma_i(G)=(\bigwedge_{i\in I}\gamma_i)(G)$ and $(\bigvee_{i\in I}\gamma_i)(N)=\langle \gamma_i(N)\mid i\in I\rangle \subseteq\langle\gamma_i(G)\mid i\in I\rangle=(\bigvee_{i\in I}\gamma_i)(G)$.
\end{center}
Therefore $\bigwedge_{i\in I}\gamma_i$ and $\bigvee_{i\in I}\gamma_i$ satisfy $(F2).$

Since $\mathrm{F}^*(G)\subseteq\bigcap_{i\in I}\gamma_i(G)\subseteq \langle\gamma_i(G)\mid i\in I\rangle$ by Proposition \ref{FF}, we see that  $\bigwedge_{i\in I}\gamma_i$ and $\bigvee_{i\in I}\gamma_i$ satisfy $(F3).$

From $\gamma_i(G)/\Phi(G)\leq \mathrm{Soc}(G/\Phi(G))$, $i\in I$, it follows that

\begin{center}
$(\bigwedge_{i\in I}\gamma_i)(G)/\Phi(G)=\bigcap_{i\in I}(\gamma_i(G)/\Phi(G))\leq \mathrm{Soc}(G/\Phi(G)) $ and

$(\bigvee_{i\in I}\gamma_i)(G)/\Phi(G)=\langle \gamma_i(G)/\Phi(G)\mid i\in I\rangle\leq \mathrm{Soc}(G/\Phi(G)).$\end{center}
So $\bigwedge_{i\in I}\gamma_i$ and $\bigvee_{i\in I}\gamma_i$ satisfy $(F4).$

Thus $\bigwedge_{i\in I}\gamma_i$ and $\bigvee_{i\in I}\gamma_i$ are $\mathbb{F}$-functorials and $(\mathcal{R},\vee,\wedge)$ is a complete lattice.

$(a.4)$ \emph{Let $L(G)=\{\gamma(G)\mid\gamma$ is an $\mathbb{F}$-functorial$\}$. Then $L(G)$ is $($lattice$)$ isomorphic to a sublattice of a subset  lattice.  In particular,  $(\mathcal{R},\vee,\wedge)$  is a   distributive lattice.}

 Since $\mathrm{F}^*(G)\subseteq \gamma(G)$ for every $\mathbb{F}$-functorial $\gamma$, we see that $\overline{L}(G)=\{\gamma(G)/\mathrm{F}^*(G)\mid\gamma$ is an $\mathbb{F}$-functorial\} is isomorphic to $L(G)$. Then $\gamma(G)/\mathrm{F}^*(G)$ lies in the non-abelian socle $\overline{S}$ of $G/\mathrm{F}^*(G)$. Note that  $\overline{S}=\overline{N}_1\times\dots\times\overline{N}_n$ where $\overline{N}_i$ is a minimal normal subgroup of $G/\mathrm{F}^*(G)$.

  Let $f: L(G)\to\mathcal{P}(\{1,\dots, n\})$  where $f(\gamma(G))=\{i\mid\overline{N}_i\subseteq\gamma(G)/\mathrm{F}^*(G)\}$. Note that $f$ is injective.  According to \cite[X, Lemma 13.6(a)]{19} every normal in $G/\mathrm{F}^*(G)$ subgroup of $\overline{S}$  is a direct product of some $\overline{N}_i$. It means that
  $f((\gamma_1\wedge\gamma_2)(G))=f(\gamma_1(G)\cap \gamma_2(G))=f(\gamma_1(G))\wedge f(\gamma_2(G))$ and $f((\gamma_1\vee\gamma_2)(G))=f(\gamma_1(G)\gamma_2(G))=f(\gamma_1(G))\vee f(\gamma_2(G))$. So $f$ is a (lattice) isomorphism between $L(G)$ and some sublattice of   $\mathcal{P}(\{1,\dots, n\})$ which is distributive.

 Now the distributive laws hold for values of $\mathbb{F}$-functorials  for every group $G$. It means that   $(\mathcal{R},\vee,\wedge)$ is a distributive lattice.

Let prove the statement $(b)$.

  $(b.1)$ \emph{$(\mathcal{R}, \circ)$ is a semigroup and $\mathrm{F}^*$ is its zero element where  $(\gamma_2\circ\gamma_1)(G)=\gamma_1(\gamma_2(G))$.}

 To prove that $(\mathcal{R}, \circ)$ is a semigroup it is sufficient to prove that $\delta(G)=\gamma_1(\gamma_2(G))$ is an $\mathbb{F}$-functorial where $\gamma_1$ and $\gamma_2$ are $\mathbb{F}$-functorials.  This statement  will follow from the following 4 claims.

  $(b.1.1)$ \emph{If $\gamma_1$ and $\gamma_2$  satisfy $(F2)$, then $\delta$ also satisfies $(F2)$}.

According to $(F2)$
  $$\delta(N)=\gamma_1(\gamma_2(N))\subseteq \gamma_1(\gamma_2(G))=\delta(G).$$ Hence $\delta$ satisfies $(F2)$.


  $(b.1.2)$ \emph{If $\gamma_1$ and $\gamma_2$  satisfy $(F1)$ and $(F2)$, then $\delta$ also satisfies $(F1)$ and $(F2)$.}

By $(b.1.1)$ $\delta$ satisfies $(F2)$. Let show that it satisfies $(F1)$. Let $f$ be an epimorphism.       Note that $f(\gamma_i(G))\trianglelefteq \gamma_i(f(G))$  by $(F1)$, $i\in\{1, 2\}$. Now
  $$f(\delta(G))=f(\gamma_1(\gamma_2(G)))\subseteq\gamma_1(f(\gamma_2(G)))\subseteq \gamma_1(\gamma_2(f(G)))=\delta(f(G)).$$ Hence $\delta$ satisfies $(F1)$.

$(b.1.3)$  \emph{If $\gamma_1$ and $\gamma_2$  satisfy $(F2)$ and $(F3)$, then $\mathrm{F}^*(G)\subseteq\delta(G)$ for every group $G$. In particular, $\delta$     satisfies $(F3)$.}

From Proposition \ref{FF} it follows that  $\mathrm{F}^*(G)\subseteq\gamma_i(G)$ for $i\in\{1, 2\}$. Then $\mathrm{F}^*(G)=\gamma_1(\mathrm{F}^*(G))\subseteq\gamma_1(\gamma_2(G))=\delta(G)$.

$(b.1.4)$  \emph{If $\gamma_1$ and $\gamma_2$  satisfy $(F4)$, then  $\delta$  also   satisfies $(F4)$.}

Since $\gamma_2(G)/\Phi(G)\subseteq\mathrm{Soc}(G/\Phi(G))$, we see that $$\delta(G)/\Phi(G)=\gamma_1(\gamma_2(G))/\Phi(G)\subseteq\gamma_2(G)/\Phi(G)\subseteq\mathrm{Soc}(G/\Phi(G)).$$
Hence $\delta$ satisfies $(F4)$.

  Thus  $(\mathcal{R}, \circ)$ is a semigroup. Let $\gamma$ be an $\mathbb{F}$-functorial. Then $\mathrm{F}^*(\gamma(G))\subseteq\mathrm{F}^*(G)$. From $\mathrm{F}^*(G)\subseteq\gamma(G)\trianglelefteq G$ it follows that $\mathrm{F}^*(\gamma(G))=\mathrm{F}^*(G)$.    Note that
 $\mathrm{F}^*(G)=\mathrm{F}^*(\mathrm{F}^*(G))\subseteq\gamma(\mathrm{F}^*(G))\subseteq \mathrm{F}^*(G)$. Hence $\gamma(\mathrm{F}^*(G))=\mathrm{F}^*(G)$. It means that $\mathrm{F}^*\circ\gamma=\gamma\circ\mathrm{F}^*=\mathrm{F}^*$ for every $\mathbb{F}$-functorial $\gamma$. Thus $\mathrm{F}^*$ is a zero element of $(\mathcal{R}, \circ)$.

$(b.2)$ \emph{Let $\gamma$ be an $\mathbb{F}$-functorial and  $\delta\in\{\gamma^{(i)}\mid i\in\mathbb{N}\}\cup\{\gamma^\infty\}$. If $\gamma$ is an $($idempotent$)$ $\mathbb{F}$-functorial, then $\delta$ is also an $($idempotent$)$ $\mathbb{F}$-functorial.}

From $(b.1)$ it follows that if  $\delta\in\{\gamma^{(i)}\mid i\in\mathbb{N}\}$, then $\delta$ is an $\mathbb{F}$-functorial. Since $\mathcal{R}$ is a complete lattice, we see that $\delta^\infty$ is also an $\mathbb{F}$-functorial. Note that if $\gamma$ is idempotent, then $\gamma(G)=\gamma^{(1)}(G)=\gamma^{(2)}(G)=\dots=\gamma^{\infty}(G)$. Hence $\delta=\gamma$ is idempotent.

 $(b.3)$  \emph {$\mathrm{\tilde{F}}^\infty$ is the largest idempotent $\mathbb{F}$-functoral.}

From $(b.2)$ it follows that  $\mathrm{\tilde{F}}^\infty$ is an $\mathbb{F}$-functorial. From $(4)$ of Proposition \ref{FF} it follows that  $\mathrm{\tilde{F}}^\infty$ is idempotent.

   Assume that an $\mathbb{F}$-functorial $\gamma$ is idempotent.   Note that $\gamma(G)\subseteq\mathrm{\tilde{F}}(G)=\mathrm{\tilde{F}}^{(1)}(G)$. Assume now that  $\gamma^{(i)}(G)\subseteq\mathrm{\tilde{F}}^{(i)}(G)$. According to $(F2)$
  $$\gamma^{(i+1)}(G)=\gamma(\gamma^{(i)}(G))\subseteq\gamma(\mathrm{\tilde{F}}^{(i)}(G))\subseteq\mathrm{\tilde{F}}(\mathrm{\tilde{F}}^{(i)}(G))
  =\mathrm{\tilde{F}}^{(i+1)}(G). $$
  Since $\gamma$ is idempotent,   $\gamma(G)=\gamma^\infty(G)\subseteq \mathrm{\tilde{F}}^\infty(G)$.

$(c)$   \emph{If $\gamma$ and $\varphi$ are an $\mathbb{F}$-functorial and a Frattini functorial respectively, then $\varphi\star\gamma$ is an $\mathbb{F}$-functorial and $\varphi\star\mathrm{\tilde F}=\mathrm{\tilde F}$.}

$(c.1)$ $\varphi\star\gamma$ satisfies $(F1)$ and $(F2)$.

Directly follows from Proposition \ref{length0}.

$(c.2)$ $\varphi\star\mathrm{\tilde F}=\mathrm{\tilde F}$.

From $\varphi(G)\subseteq\Phi(G)$ it follows that  $\Phi(G/\varphi(G))=\Phi(G)/\varphi(G)$ and the following diagram is commutative:

\[
\xymatrix{
G \ar[r]^{f_1} \ar[dr]_{f_3}& G/\varphi(G) \ar[d]^{f_2} \\
 & G/\Phi(G)
}
\]

Let $X=\mathrm{Soc}(G/\Phi(G))$. Then $\mathrm{\tilde F}(G)=f_3^{-1}(X)$ and $\mathrm{\tilde F}(G/\varphi(G))=f_2^{-1}(X)$ by the definition of $\mathrm{\tilde F}$. Note that $(\varphi\star\mathrm{\tilde F})(G)=f_1^{-1}(\mathrm{\tilde F}(G/\varphi(G)))$ by its definition. Hence $(\varphi\star\mathrm{\tilde F})(G)=f_1^{-1}(f_2^{-1}(X))=f_3^{-1}(X)=\mathrm{\tilde F}(G)$. Thus $\varphi\star\mathrm{\tilde F}=\mathrm{\tilde F}$.

$(c.3)$  \emph{$\varphi\star\gamma$ is an $\mathbb{F}$-functorial.}

Note that $\mathrm{F}^*(G)/\varphi(G)\subseteq \mathrm{F}^*(G/\varphi(G))\subseteq\gamma(G/\varphi(G))$. Hence $\mathrm{F}^*(G)\subseteq(\varphi\star\gamma)(G)$. It means that $\varphi\star\gamma$ satisfies $(F3)$. From definition of $\star$ it follows that  if $\gamma_1(G)\subseteq\gamma_2(G)$ for every group $G$, then $(\varphi\star\gamma_1)(G)\subseteq(\varphi\star\gamma_2)(G)$ for every group $G$. Now $(\varphi\star\gamma)(G)\subseteq(\varphi\star\mathrm{\tilde F})(G)=\mathrm{\tilde F}(G)$. Hence $(\varphi\star\gamma)(G)/\Phi(G)\subseteq\mathrm{Soc}(G/\Phi(G))$, i.e. $\varphi\star\gamma$ satisfies $(F4)$. Thus   $\varphi\star\gamma$ is an $\mathbb{F}$-functorial.

$(d)$ \emph{The cardinality of $\mathcal{R}$ is continuum.}

According to  Cantor--Schr\"oder--Bernstein theorem to prove this statement it is enough to prove that there are injections from $\mathcal{R}$ to $\mathcal{P}(\mathbb{N})$ and from $\mathcal{P}(\mathbb{N})$ to $\mathcal{R}$.

$(d.1)$ \emph{There is an injection from $\mathcal{R}$ to $\mathcal{P}(\mathbb{N})$}.

  Let $\mathcal{G}$ be the set of non-isomorphic finite groups  such that for every finite group $H$ there is unique $G\in\mathcal{G}$ with $H\simeq G$. It is known that   $\mathcal{G}$ is  countable. From the definition of    functorial it follows that to define an $\mathbb{F}$-functorial $\gamma$ it is sufficient to   define it on each member of $\mathcal{G}$. For $H\in\mathcal{G}$ let $N(H)$ be the set of all characteristic subgroups of $H$. Note that $N(H)$ is a finite set. Let $\mathcal{N}$ be the disjoint union of  $N(H)$ for all $H\in\mathcal{G}$. Then $\mathcal{N}$ is countable  and every $\mathbb{F}$-functorial $\gamma$ is uniquely defined by the subset $\{\gamma(H)\mid H\in\mathcal{G}\}$ of $\mathcal{N}$. Hence there is an injection from   $\mathcal{R}$ to $\mathcal{P}(\mathcal{N})$. Since $\mathcal{N}$ is countable, there is a bijection from $\mathcal{P}(\mathcal{N})$ to $\mathcal{P}(\mathbb{N})$. Thus there is an injection from $\mathcal{R}$ to $\mathcal{P}(\mathbb{N})$.

$(d.2)$ \emph{There is an injection from $\mathcal{P}(\mathbb{N})$ to $\mathcal{R}$.}

 Recall that the set of all primes is denoted by $\mathbb{P}$.   Let $\pi\subseteq\mathbb{P}$ and $\Phi_\pi(G)=\mathrm{O}_\pi(\Phi(G))$ (if $\pi=\emptyset$, then  $\Phi_\pi(G)=1$). It is straightforward to check that $\Phi_\pi$ is a Frattini functorial.

  Let $\pi_1\neq\pi_2$ be a subsets of $\mathbb{P}$. WLOG we may assume that there is a prime $p\in\pi_1\setminus\pi_2$.   Let $G\simeq\mathbb{A}_p$ be the alternating group of degree $p$. Note that $p\in\pi(G)$.  From  \cite{Griess1978} it follows that there exists a faithful  $\mathbb{F}_pG$-module $A$
  such that  $A\rightarrow E\twoheadrightarrow G$ where $A\stackrel {G}{\simeq} \Phi(E)$ and
 $E/\Phi(E)\simeq G$.

 Note that $\mathrm{O}_{\pi_1}(\Phi(E))=\Phi(E)$, $\mathrm{O}_{\pi_2}(\Phi(E))=1$,  $\mathrm{F}^*(E)=\Phi(E)$ and $\mathrm{F}^*(E/\Phi(E))\simeq G$. It means that $(\Phi_{\pi_2}\star\mathrm{F}^*)(E)=\Phi(E)\neq E=(\Phi_{\pi_1}\star\mathrm{F}^*)(E)$. Hence $\Phi_{\pi_2}\star\mathrm{F}^*\neq \Phi_{\pi_1}\star\mathrm{F}^*$.

So there is an injection from $\mathcal{P}(\mathbb{P})$ to $\mathcal{R}$. Since   $\mathbb{P}$   is countable, we see that there is a bijection between  $\mathcal{P}(\mathbb{N})$ and $\mathcal{P}(\mathbb{P})$. Thus there is an injection from $\mathcal{P}(\mathbb{N})$ to $\mathcal{R}$.
  \end{proof}

\begin{proof}[Proof of Corollary \ref{lattice11}]
  Directly follows from $(b.2)$ of the proof of Theorem \ref{lattice}.
\end{proof}

\begin{proof}[Proof of Corollary \ref{lattice12}]
  Note that the set of all idempotent $\mathbb{F}$-functorials is a subset of $\mathcal{R}$.
  According to $(4)$ of Proposition \ref{FF} and the proof of Theorem \ref{lattice}  $(\Phi_{\pi}\star\mathrm{F}^*)^\infty$ is an idempotent $\mathbb{F}$-functorial.  Note that in $(d.2)$ of the proof of Theorem \ref{lattice} $(\Phi_{\pi_2}\star\mathrm{F}^*)((\Phi_{\pi_2}\star\mathrm{F}^*)(E))=\Phi(E)\neq E=(\Phi_{\pi_1}\star\mathrm{F}^*)((\Phi_{\pi_1}\star\mathrm{F}^*)(E))$. So $(\Phi_{\pi_2}\star\mathrm{F}^*)^\infty(E))=\Phi(E)\neq E=(\Phi_{\pi_1}\star\mathrm{F}^*)^\infty(E)$. Hence for $\pi_1\neq\pi_2$ functorials $(\Phi_{\pi_1}\star\mathrm{F}^*)^\infty$ and $(\Phi_{\pi_2}\star\mathrm{F}^*)^\infty$ are different. So the  set of all idempotent $\mathbb{F}$-functorials contains a subset of cardinality continuum. Thus its cardinality is continuum.
\end{proof}

\begin{proof}[Proof of Theorem \ref{lattice0}]
  Note that $\mathrm{F}^*(G)=\mathrm{\tilde F}(G)=\mathrm{F}(G)$ in a soluble group $G$. Now the statement of  Theorem \ref{lattice0} directly follows from $(2)$ and $(3)$ of Proposition \ref{FF}.
\end{proof}


\section{$\mathbb{F}$-functorials and classes of groups}

Recall that  a class of groups $\mathfrak{X}$ is called

$(1)$ \emph{$Q$-closed} or \emph{homomorph} if  every homomorphic image of an $\mathfrak{X}$-group is an $\mathfrak{X}$-group;

$(2)$ \emph{formation} if it is $Q$-closed and\,$G/(M\cap N)\in \mathfrak{X}$ when\,$G /M\in\mathfrak{X}$\,and\,$G/ N\in\mathfrak{X}$;

$(3)$ \emph{$N_0$-closed} if from $G=NM$ where $N$ and $M$ are normal $\mathfrak{X}$-subgroups of $G$ it follows that $G\in\mathfrak{X}$.

If $\mathfrak{X}$ is an $N_0$-closed class of groups with 1, then $G_\mathfrak{X}=\langle H\trianglelefteq G\mid H\in\mathfrak{X}\rangle$ is the largest normal $\mathfrak{X}$-subgroup of $G$.

\begin{proposition}
  Let $\gamma$ be an $\mathbb{F}$-functorial and $\mathfrak{F}(\gamma)=(G\mid\gamma(G)=G)$. Then $\mathfrak{F}(\gamma)=\mathfrak{F}(\gamma^\infty)$ is a $\{Q, N_0\}$-closed class of groups and $G_{\mathfrak{F}(\gamma)}=\gamma^\infty(G)$.
\end{proposition}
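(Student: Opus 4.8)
The plan is to verify the four assertions in turn: the identity $\mathfrak{F}(\gamma)=\mathfrak{F}(\gamma^\infty)$, then $Q$-closure, then $N_0$-closure, and finally the radical formula $G_{\mathfrak{F}(\gamma)}=\gamma^\infty(G)$. All four should follow from the axioms $(F1)$, $(F2)$ together with the idempotence of $\gamma^\infty$ recorded in Proposition \ref{FF}$(4)$.

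First I would record the identity $\mathfrak{F}(\gamma)=\mathfrak{F}(\gamma^\infty)$. Since $\gamma^{(i+1)}(G)\leq\gamma^{(i)}(G)$ for all $i$, if $\gamma(G)=G$ then a trivial induction gives $\gamma^{(i)}(G)=G$ for every $i$, whence $\gamma^\infty(G)=G$; conversely $\gamma^\infty(G)\subseteq\gamma(G)\subseteq G$ forces $\gamma(G)=G$ as soon as $\gamma^\infty(G)=G$. For $Q$-closure, take $G\in\mathfrak{F}(\gamma)$ and an epimorphism $f: G\to G^*$; then $(F1)$ gives $G^*=f(G)=f(\gamma(G))\subseteq\gamma(G^*)\subseteq G^*$, so $\gamma(G^*)=G^*$. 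For $N_0$-closure, if $G=NM$ with $N,M\trianglelefteq G$ and $\gamma(N)=N$, $\gamma(M)=M$, then $(F2)$ yields $N=\gamma(N)\subseteq\gamma(G)$ and $M=\gamma(M)\subseteq\gamma(G)$, so $G=NM\subseteq\gamma(G)$ and hence $\gamma(G)=G$. Since $\gamma(1)=1$, the class $\mathfrak{F}(\gamma)$ contains the trivial group, so it is a $\{Q,N_0\}$-closed class and $G_{\mathfrak{F}(\gamma)}$ is well defined as its largest normal member in each $G$.

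For the radical formula I would prove both inclusions. For $\gamma^\infty(G)\subseteq G_{\mathfrak{F}(\gamma)}$, note that each $\gamma^{(i)}(G)$ is characteristic in $G$ (a characteristic subgroup of a characteristic subgroup is characteristic), hence $\gamma^\infty(G)$ is characteristic, so normal, in $G$; and by idempotence of $\gamma^\infty$ we have $\gamma^\infty(\gamma^\infty(G))=\gamma^\infty(G)$, so $\gamma^\infty(G)\in\mathfrak{F}(\gamma^\infty)=\mathfrak{F}(\gamma)$. Thus $\gamma^\infty(G)$ is a normal $\mathfrak{F}(\gamma)$-subgroup of $G$ and therefore contained in $G_{\mathfrak{F}(\gamma)}$. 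For the reverse inclusion set $K=G_{\mathfrak{F}(\gamma)}$, so $K\trianglelefteq G$ and $\gamma(K)=K$. I claim $K\subseteq\gamma^{(i)}(G)$ for all $i$ by induction: the base case $K=\gamma(K)\subseteq\gamma(G)$ is $(F2)$, and if $K\subseteq\gamma^{(i)}(G)$ then $K\trianglelefteq\gamma^{(i)}(G)$ (as $K\trianglelefteq G$), so $(F2)$ applied inside $\gamma^{(i)}(G)$ gives $K=\gamma(K)\subseteq\gamma(\gamma^{(i)}(G))=\gamma^{(i+1)}(G)$. Intersecting over $i$ yields $K\subseteq\gamma^\infty(G)$, which closes the argument.

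The only delicate point is this reverse inclusion $G_{\mathfrak{F}(\gamma)}\subseteq\gamma^\infty(G)$: everything hinges on iterating $(F2)$ down the descending chain $\gamma^{(i)}(G)$, which works precisely because $K$ is normal not merely in $G$ but in each intermediate term $\gamma^{(i)}(G)$. The other three parts are direct consequences of $(F1)$, $(F2)$ and the idempotence of $\gamma^\infty$, and I do not anticipate any obstruction there.
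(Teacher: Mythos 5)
Your proof is correct and follows essentially the same route as the paper: the same reduction of $\mathfrak{F}(\gamma)=\mathfrak{F}(\gamma^\infty)$ via the descending chain, $Q$- and $N_0$-closure from $(F1)$ and $(F2)$, and the two inclusions for the radical via idempotence of $\gamma^\infty$ and $(F2)$. The only cosmetic difference is that where the paper invokes $(F2)$ for $\gamma^\infty$ directly (available since $\gamma^\infty$ is itself an $\mathbb{F}$-functorial by Theorem \ref{lattice}), you re-derive the containment $G_{\mathfrak{F}(\gamma)}\subseteq\gamma^\infty(G)$ by iterating $(F2)$ for $\gamma$ down the chain $\gamma^{(i)}(G)$, which is equally valid.
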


\begin{proof} Since $\gamma$ satisfies $(F1)$ and $(F2)$, we see that $\mathfrak{F}(\gamma)$ is  a $\{Q, N_0\}$-closed class of groups.
  Note that $\gamma(G)=G$ iff $\gamma(\gamma(G))=G$. Hence  $\gamma(G)=G$ iff $\gamma^\infty(G)=G$. So $\mathfrak{F}(\gamma)=\mathfrak{F}(\gamma^\infty)$.  Now $G_{\mathfrak{F}(\gamma)}=\langle H\trianglelefteq G\mid H\in\mathfrak{F}(\gamma)\rangle=\langle H\trianglelefteq G\mid H=\gamma^\infty(H)\rangle\subseteq\gamma^\infty(G)$ by $(F2)$. From $\gamma^\infty(\gamma^\infty(G))=\gamma^\infty(G)$ it follows that $\gamma^\infty(G)\in\mathfrak{F}(\gamma)$. Hence $\gamma^\infty(G)\subseteq G_{\mathfrak{F}(\gamma)}$. Thus $\gamma^\infty(G)= G_{\mathfrak{F}(\gamma)}$.
\end{proof}






Recall that a formation $\mathfrak{F}$ is called \emph{saturated} if from $G/\Phi(G)\in\mathfrak{F}$ always follows $G\in\mathfrak{F}$.

\begin{theorem}\label{trad} Let $\mathfrak{F}$ be an $N_0$-closed formation.

$(1)$ If $\mathfrak{F}$ is saturated and  $\mathrm{F}(G)\subseteq G_\mathfrak{F}\subseteq \tilde{\mathrm{F}}(G)$ holds for every group $G$, then $\mathfrak{F}=\mathfrak{N}$ is a formation of all nilpotent groups.

$(2)$ If $\mathrm{F}^*(G)\subseteq G_\mathfrak{F}\subseteq \tilde{\mathrm{F}}(G)$ holds for every group $G$,
 then $\mathfrak{F}=\mathfrak{N}^*$ is a formation of all quasinilpotent groups.
\end{theorem}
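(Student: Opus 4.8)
The plan is to prove both parts through one structural obstruction. First I record the two easy halves. The lower bounds give $\mathfrak{N}\subseteq\mathfrak{F}$ and $\mathfrak{N}^*\subseteq\mathfrak{F}$: if $G$ is nilpotent (resp. quasinilpotent) then $G=\mathrm{F}(G)\subseteq G_\mathfrak{F}$ (resp. $G=\mathrm{F}^*(G)\subseteq G_\mathfrak{F}$), whence $G_\mathfrak{F}=G$, i.e. $G\in\mathfrak{F}$. For the reverse inclusions the upper bound is decisive: any $G\in\mathfrak{F}$ satisfies $G=G_\mathfrak{F}\subseteq\tilde{\mathrm{F}}(G)\subseteq G$, so $G=\tilde{\mathrm{F}}(G)$; using $\tilde{\mathrm{F}}(G)/\Phi(G)=\mathrm{Soc}(G/\Phi(G))=\mathrm{F}^*(G/\Phi(G))$ (F\"orster) this says precisely that $G/\Phi(G)$ is quasinilpotent and equals its own socle. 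So in both parts the task reduces to upgrading ``$G/\Phi(G)$ (quasi)nilpotent'' to ``$G$ (quasi)nilpotent'', and the whole difficulty sits in the Frattini subgroup.

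The technical heart will be the following lemma, which I would isolate and prove once: \emph{no $N_0$-closed formation $\mathfrak{F}$ with $G_\mathfrak{F}\subseteq\tilde{\mathrm{F}}(G)$ for all $G$ can contain a monolithic group $G_0$ whose socle $N=\mathrm{Soc}(G_0)$ lies in $\Phi(G_0)$ and is not central.} (Such $N$ is automatically an elementary abelian $p$-group, being a minimal normal subgroup inside the nilpotent group $\Phi(G_0)$.) To prove it I would form the subdirect product $P=\{(x,y)\in G_0\times G_0\mid xN=yN\}$ over $G_0/N$. Its projections give $P/N_1\cong G_0\cong P/N_2$ with $N_1=N\times1$, $N_2=1\times N$ and $N_1\cap N_2=1$, so $P\in\mathfrak{F}$ because $\mathfrak{F}$ is a formation. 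On the other hand $P=N_1\rtimes\Delta$, where $\Delta=\{(x,x)\}\cong G_0$ is a complement to $N_1$; hence $N_1$ is a complemented minimal normal subgroup of $P$, so $N_1\cap\Phi(P)=1$, and $\Delta$ acts on $N_1$ exactly as $G_0$ acts on $N$, i.e. nontrivially. Consequently the image of $N_1$ in $P/\Phi(P)$ is a nontrivial abelian normal subgroup that is not centralized by $P/\Phi(P)$, so $P/\Phi(P)$ is \emph{not} quasinilpotent and $\tilde{\mathrm{F}}(P)\neq P$. This contradicts $P=P_\mathfrak{F}\subseteq\tilde{\mathrm{F}}(P)$, proving the lemma. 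Verifying these three points (membership $P\in\mathfrak{F}$ via the formation axiom, the splitting $P=N_1\rtimes\Delta$, and the non-quasinilpotence of $P/\Phi(P)$) is exactly the step I expect to be the main obstacle.

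For part (2) I would run a minimal counterexample: let $G_0\in\mathfrak{F}\setminus\mathfrak{N}^*$ be of least order. Since $\mathfrak{N}^*$ is a formation, if $G_0$ had two distinct minimal normal subgroups $N_1,N_2$ then $G_0$ would embed in $(G_0/N_1)\times(G_0/N_2)$ with both factors quasinilpotent (by minimality), forcing $G_0\in\mathfrak{N}^*$; so $G_0$ is monolithic with socle $N$. If $N\not\subseteq\Phi(G_0)$ then, $G_0$ being monolithic, $\Phi(G_0)=1$ and $G_0=\tilde{\mathrm{F}}(G_0)=\mathrm{Soc}(G_0)=N$ is quasinilpotent, a contradiction; thus $N\subseteq\Phi(G_0)$ and $N$ is abelian. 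If $N$ were central then $G_0/N\in\mathfrak{N}^*$ together with $N\subseteq\mathrm{Z}(G_0)$ would make $G_0$ quasinilpotent; so $N$ is non-central. Now $G_0$ satisfies the hypotheses of the lemma, a contradiction, and hence $\mathfrak{F}\subseteq\mathfrak{N}^*$.

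For part (1) the extra hypothesis that $\mathfrak{F}$ is saturated lets me manufacture the configuration of the lemma from any non-abelian simple group in $\mathfrak{F}$. Using that $\mathfrak{N}$ is saturated I first pass to $\bar G=G/\Phi(G)$, which lies in $\mathfrak{F}$ and equals $\mathrm{Soc}(\bar G)$, hence is a direct product of minimal normal subgroups; it suffices to show each is abelian, i.e. that $\mathfrak{F}$ contains no non-abelian simple group. Suppose $S\in\mathfrak{F}$ is non-abelian simple. By \cite{Griess1978} there is a non-split extension $V\rightarrowtail T\twoheadrightarrow S$ with $V=\Phi(T)$ a nontrivial (faithful) irreducible $\mathbb{F}_pS$-module; since $T/\Phi(T)\cong S\in\mathfrak{F}$ and $\mathfrak{F}$ is saturated, $T\in\mathfrak{F}$. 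Then $T$ is monolithic with socle $V\subseteq\Phi(T)$ non-central, so the lemma applies to $T$ and yields a contradiction. Therefore no such $S$ exists, $\bar G$ is abelian, $G$ is nilpotent, and $\mathfrak{F}=\mathfrak{N}$. Here the delicate point is the appeal to \cite{Griess1978} to guarantee a \emph{nontrivial} Frattini module for $S$, which is what converts the harmless equality $\tilde{\mathrm{F}}(T)=T$ into the usable monolithic--Frattini configuration.
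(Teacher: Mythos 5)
Your fibre-product lemma is correct and is a genuinely different device from the paper's. The paper reaches the same contradiction by citing the standard fact that for any chief factor $H/K$ of a group $X$ in a formation $\mathfrak{F}$ the group $(H/K)\rtimes(X/C_X(H/K))$ again lies in $\mathfrak{F}$; that group has trivial Frattini subgroup and unique minimal normal subgroup $H/K$, so $\tilde{\mathrm{F}}$ of it is proper and the contradiction is immediate. Your subdirect product $P=\{(x,y)\mid xN=yN\}$ proves the needed special case of that fact from scratch: $P\in\mathfrak{F}$ by $R_0$-closure, $N_1$ is a complemented (hence non-Frattini) non-central abelian minimal normal subgroup of $P$, and a group equal to its socle centralizes all of its abelian minimal normal subgroups, so $\tilde{\mathrm{F}}(P)<P=P_\mathfrak{F}$. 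This buys self-containedness at the cost of a larger auxiliary group. Your part (2) is essentially the paper's argument (monolithic minimal counterexample, socle in the Frattini subgroup, non-central, contradiction) and is sound. Note also that your lemma never actually uses that $G_0$ is monolithic or that $N\leq\Phi(G_0)$: any non-central abelian minimal normal subgroup of a group in $\mathfrak{F}$ suffices, and you should state it that way, because you need the stronger form below.

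The genuine gap is in part (1), at the step ``$V=\Phi(T)$ a nontrivial (faithful) \emph{irreducible} $\mathbb{F}_pS$-module; then $T$ is monolithic with socle $V$ non-central.'' The Griess--Schmid result (as used in the paper) provides a \emph{faithful} Frattini module, not an irreducible one. If $V$ is reducible, $T$ need not be monolithic, and worse, the minimal normal subgroups of $T$ inside $V$ may all be central: the socle of a faithful $S$-module can consist entirely of trivial submodules when $p$ divides $|S|$. So the lemma as you stated it (monolithic, socle non-central) does not apply to $T$. The repair is the step the paper actually performs: since $S$ acts faithfully on the $p$-group $V$ and the stabilizer of a chain of subgroups of a $p$-group is a $p$-group, some chief factor $N/K$ of $T$ below $\Phi(T)$ satisfies $T/C_T(N/K)\simeq S$; pass to $T/K\in\mathfrak{F}$, in which $N/K$ is a non-central abelian minimal normal subgroup, and apply your lemma in the strengthened form above. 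Without this extra argument the deduction ``$T$ is monolithic with non-central socle'' is unjustified.
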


\begin{proof}
(1)
 From $\mathrm{F}(G)\subseteq G_\mathfrak{F}$ for every group $G$ it follows that $\mathfrak{N}\subseteq \mathfrak{F}$. Assume that
 $\mathfrak{F}\setminus\mathfrak{N}\neq\emptyset$. Let chose a minimal order group $G$ from
 $\mathfrak{F}\setminus\mathfrak{N}$. Since $\mathfrak{F}$ and $\mathfrak{N}$ are saturated formations,
 we see that    $\Phi(G)=1$ and $G$ has a unique minimal normal subgroup. From $\mathrm{F}(G)\subseteq G_\mathfrak{F}\subseteq \tilde{\mathrm{F}}(G)$ it follows that  $G=\mathrm{Soc}(G)$ is a non-abelian group. From  \cite{Griess1978} it follows that there exists a faithful  $\mathbb{F}_pG$-module $A$ for a $p\in\pi(G)$,
  such that  $A\rightarrow E\twoheadrightarrow G$ where $A\stackrel {G}{\simeq} \Phi(E)$ and
 $E/\Phi(E)\simeq G$.   Hence $E\in\mathfrak{F}$.

  Since $A$ is a  faithful  $\mathbb{F}_pG$-module and $G$ is a non-abelian simple group, we see that there is a chief factor $N/K$ of $E$ below $\Phi(E)$ with $E/C_E(N/K)\simeq G$.   Note that $H=(N/K)\rtimes (E/C_E(N/K))\in\mathfrak{F}$ by \cite[Corollary 2.2.5]{s9}. Since $N/K$ is the unique minimal normal subgroup of $H$, we see that
 $\tilde{\mathrm{F}}(H)\simeq N/K<H=H_\mathfrak{F}$, a contradiction.  Thus $\mathfrak{N}=\mathfrak{F}$.

 (2)
 From $\mathrm{F}^*(G)\subseteq G_\mathfrak{F}$ for every group  $G$ it follows that  $\mathfrak{N}^*\subseteq \mathfrak{F}$. Assume that $\mathfrak{F}\setminus\mathfrak{N}^*\neq\emptyset$. Let chose a minimal order group $G$ from
 $\mathfrak{F}\setminus\mathfrak{N}^*$. It is clear that $G=G_\mathfrak{F}=\tilde{\mathrm{F}}(G)$.  Since
 $\mathfrak{F}$ and $\mathfrak{N}^*$ are formations, we see that  $G$ has a unique minimal normal subgroup $N$. If $\Phi(G)=1$, then
 $G=\mathrm{Soc}(G)\in\mathfrak{N}^*$, a contradiction. Hence $N\leq\Phi(G)$ and  $N$ is a normal elementary abelian   $p$-subgroup of $G$ for some prime $p$.
 By our assumption $G/N\in\mathfrak{N}^*$. If $C_G(N)=G$, then $G$ is  quasinilpotent by the definition of a quasinilpotent group, a contradiction. Thus $C_G(N)\neq G$.
Now  $N$ is a unique minimal normal subgroup of
  $H=N\rtimes (G/C_G(N))$ and $H\in\mathfrak{F}$ by \cite[Corollary 2.2.5]{s9}. Note that $\Phi(H)=1$.
 Hence $\tilde{\mathrm{F}}(G)=N$ and $H_\mathfrak{F}=H$, the contradiction. Thus
 $\mathfrak{N}^*=\mathfrak{F}$.
\end{proof}

\begin{corollary}
If $\mathfrak{F}(\gamma)$ is a formation, then $\mathfrak{F}(\gamma)=\mathfrak{N}^*$.
\end{corollary}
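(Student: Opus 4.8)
The plan is to recognize that this corollary is designed to be an immediate consequence of Theorem~\ref{trad}$(2)$ together with the proposition preceding it, so the whole task reduces to verifying that $\mathfrak{F}(\gamma)$ satisfies the hypotheses of that theorem. First I would record what is already known about $\mathfrak{F}(\gamma)$: by the proposition just proved, $\mathfrak{F}(\gamma)=\mathfrak{F}(\gamma^\infty)$ is a $\{Q,N_0\}$-closed class and, crucially, $G_{\mathfrak{F}(\gamma)}=\gamma^\infty(G)$ for every group $G$. Since we are assuming that $\mathfrak{F}(\gamma)$ is a formation, and it is in particular $N_0$-closed, it is an $N_0$-closed formation, which is exactly the standing hypothesis of Theorem~\ref{trad}.

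The only remaining thing to check is the sandwich condition $\mathrm{F}^*(G)\subseteq G_{\mathfrak{F}(\gamma)}\subseteq\tilde{\mathrm{F}}(G)$, which after the identification above reads $\mathrm{F}^*(G)\subseteq\gamma^\infty(G)\subseteq\tilde{\mathrm{F}}(G)$. For the lower containment I would invoke that $\gamma^\infty$ is itself an $\mathbb{F}$-functorial (established in part $(b.2)$ of the proof of Theorem~\ref{lattice}); in particular it satisfies $(F2)$ and $(F3)$, so Proposition~\ref{FF}$(2)$ gives $\mathrm{F}^*(G)\subseteq\gamma^\infty(G)$ — equivalently, this is just the fact that $\mathrm{F}^*$ is the smallest $\mathbb{F}$-functorial. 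For the upper containment I would use $\gamma^\infty(G)\subseteq\gamma(G)$ together with Proposition~\ref{FF}$(3)$, which yields $\gamma(G)\leq\tilde{\mathrm{F}}(G)$ from property $(F4)$; hence $\gamma^\infty(G)\leq\tilde{\mathrm{F}}(G)$.

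With both containments in hand, applying Theorem~\ref{trad}$(2)$ to the $N_0$-closed formation $\mathfrak{F}(\gamma)$ immediately forces $\mathfrak{F}(\gamma)=\mathfrak{N}^*$, completing the argument. There is essentially no genuine obstacle here; the proof is a matter of correctly assembling previously established facts. The one point that requires a little care — and which I would flag as the place where the reasoning could go wrong — is ensuring that the two containments are verified for the iterated functorial $\gamma^\infty$ (the one that actually equals the radical $G_{\mathfrak{F}(\gamma)}$), rather than only for $\gamma$ itself; this is legitimate precisely because $\gamma^\infty$ inherits the $\mathbb{F}$-functorial properties, so both Proposition~\ref{FF}$(2)$ and the monotonicity $\gamma^\infty(G)\subseteq\gamma(G)$ apply.
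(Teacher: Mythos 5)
Your proof is correct and follows exactly the route the paper intends: the corollary is stated without proof as an immediate consequence of Theorem~\ref{trad}$(2)$ applied to the $N_0$-closed formation $\mathfrak{F}(\gamma)$ with $G_{\mathfrak{F}(\gamma)}=\gamma^\infty(G)$, and your verification of the sandwich $\mathrm{F}^*(G)\subseteq\gamma^\infty(G)\subseteq\tilde{\mathrm{F}}(G)$ via Proposition~\ref{FF} and part $(b.2)$ of the proof of Theorem~\ref{lattice} supplies precisely the details the authors leave implicit.
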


Recall that the Frattini subgroup is the intersection of all maximal subgroups. The Jacobson radical is   the analog of the Frattini subgroup in the ring theory. It is the intersection of all maximal (right) ideals. Note that the concept of ideal in the ring theory corresponds to the concept of normal subgroup in the group theory. Here we proved:

\begin{theorem}\label{kolc}
   A group $G=\mathrm{\tilde F}(G)$  iff the intersection of all maximal normal subgroup of  $G$ coincides with $\Phi(G)$.
\end{theorem}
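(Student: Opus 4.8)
The plan is to reduce everything to the quotient $\bar G=G/\Phi(G)$ and to isolate the ``radical-free'' fact that, writing $L(H)$ for the intersection of all maximal normal subgroups of a finite group $H$, one has $L(H)=1$ if and only if $H=\mathrm{Soc}(H)$. Granting this, the theorem follows at once. By the definition of $\tilde{\mathrm{F}}$ the condition $G=\tilde{\mathrm{F}}(G)$ is exactly $G/\Phi(G)=\mathrm{Soc}(G/\Phi(G))$, i.e. $\bar G=\mathrm{Soc}(\bar G)$. Next I would record that every maximal normal subgroup $M$ of $G$ contains $\Phi(G)$: the product $\Phi(G)M$ is normal and contains $M$, so by maximality it equals $M$ or $G$, and the non-generator property of $\Phi(G)$ excludes $\Phi(G)M=G$ (which would give $G=\langle M\rangle=M$). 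Hence, by the correspondence theorem, the maximal normal subgroups of $\bar G$ are exactly the images $M/\Phi(G)$, whence $L(\bar G)=L(G)/\Phi(G)$, so that $L(G)=\Phi(G)$ is equivalent to $L(\bar G)=1$. The chain $G=\tilde{\mathrm{F}}(G)\iff\bar G=\mathrm{Soc}(\bar G)\iff L(\bar G)=1\iff L(G)=\Phi(G)$ then completes the argument; note that $\Phi(\bar G)=1$ is not even needed, since the radical-free fact holds unconditionally.

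For the easy implication, suppose $H=\mathrm{Soc}(H)$, a direct product of minimal normal subgroups. Each such subgroup $N$ is then a direct factor of $H$, say $H=N\times C$; the factor $C$ centralizes $N$ and $N$ acts on itself by inner automorphisms, so $H$ induces only inner automorphisms on $N$. Since inner automorphisms of a direct product of simple groups normalize each simple component, those components are normal in $H$, and minimality forces $N$ to be simple. Thus $H=T_1\times\cdots\times T_s$ with all $T_j$ simple and normal; the subgroups $\prod_{i\neq j}T_i$ are maximal normal with simple quotient $T_j$, and their intersection over $j$ is trivial, so $L(H)=1$.

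The hard implication is the converse, and I expect the generation step inside it to be the main obstacle. Assuming $L(H)=1$, I would thin out the finite family of all maximal normal subgroups to an irredundant subfamily $M_1,\dots,M_n$ still intersecting in $1$, so that $N_k:=\bigcap_{i\neq k}M_i\neq1$ for each $k$. Then $N_k\cap M_k=\bigcap_i M_i=1$, and since $N_k\neq1$ we get $N_k\not\subseteq M_k$, whence $N_kM_k=H$; thus $H=N_k\times M_k$ and $N_k\cong H/M_k$ is simple and normal in $H$. Because $N_i\subseteq M_k$ whenever $i\neq k$, the subgroups $N_1,\dots,N_n$ pairwise commute and form an internal direct product $P=N_1\times\cdots\times N_n$. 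The delicate point is to show $P=H$. For this I would use the embedding $\pi\colon H\to\prod_k H/M_k$ (its kernel is $\bigcap_k M_k=1$): the $k$-th coordinate of $\pi(n_1\cdots n_n)$ equals $n_kM_k$, as the remaining factors lie in $M_k$, and this runs over all of $N_kM_k/M_k=H/M_k$ as $n_k$ runs over $N_k$. Hence $\pi(P)=\prod_k H/M_k\supseteq\pi(H)$, and injectivity forces $P=H$. Then $H=N_1\times\cdots\times N_n$ is a direct product of simple groups, i.e. $H=\mathrm{Soc}(H)$, as required.
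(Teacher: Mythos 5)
Your proof is correct, and its skeleton coincides with the paper's: both reduce modulo $\Phi(G)$ (the paper's step $(b)$ is exactly your observation that every maximal normal subgroup contains $\Phi(G)$, which you justify more explicitly via the non-generator property) and both rest on the same key lemma, namely that the intersection $L(H)$ of all maximal normal subgroups is trivial iff $H=\mathrm{Soc}(H)$. Where you genuinely diverge is in the hard direction of that lemma. The paper takes a Remak decomposition of $H$ into indecomposable direct factors and derives a contradiction if some indecomposable factor fails to be a minimal normal subgroup, using a maximal normal subgroup avoiding a given minimal normal one; you instead pass to an irredundant family $M_1,\dots,M_n$ of maximal normal subgroups with trivial intersection, show each partial intersection $N_k=\bigcap_{i\neq k}M_i$ is a simple normal complement to $M_k$, and use the subdirect embedding $H\hookrightarrow\prod_k H/M_k$ to force $H=N_1\times\cdots\times N_n$. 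Your route is more constructive (it exhibits the simple direct factors explicitly rather than arguing by contradiction against indecomposability) and avoids invoking the existence of a decomposition into indecomposables; it also makes explicit that $\Phi(H)=1$ is a consequence of $L(H)=1$ rather than a needed side hypothesis, whereas the paper states its step $(a)$ with $M(G)=\Phi(G)=1$ as a joint assumption. The paper's version is marginally shorter because the indecomposable-factor argument dispenses with the counting/embedding step. Both are valid; the content proved is the same.
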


\begin{proof}
Let $M(G)$ be the intersection of all (proper) maximal normal subgroups of  $G$.

  $(a)$ \emph{$M(G)=\Phi(G)=1$ iff $G=\mathrm{Soc}(G)$.}

  Assume that $M(G)=\Phi(G)=1$ and $G$ has a not irreducible indecomposable  direct multiplier $K$. Let $N$ be a minimal normal subgroup of  $G$ and $N\leq K$. From $M(G)=\Phi(G)=1$ it follows that there is a maximal normal subgroup $M$ of $G$ with $N\not\leq M$. Note that $N\cap M=1$. From $M<MN\trianglelefteq G$ it follows that $MN=G$.  So $G=N\times M$. Thus $K=G\cap K=NM\cap K=N(M\cap K)=N\times (M\cap K),$ a contradiction.

Assume now that $G=\mathrm{Soc}(G)$. Note that $\Phi(G)=1$. Since every minimal normal subgroup is a direct product of simple groups, we see that $G$ is a direct product of simple groups. Not that if we take all multipliers in this product except one, then we obtain a maximal normal subgroup of $G$. The intersection of all such subgroups is 1.

$(b)$ $M(G/\Phi(G))=M(G)/\Phi(G)$.

Note that $M$ is a maximal normal subgroup of  $G$ if  $M/\Phi(G)$ is a maximal normal subgroup of $G/\Phi(G)$. Thus  $M(G/\Phi(G))=M(G)/\Phi(G)$.

Let prove the main statement of the theorem.
If $M(G)=\Phi(G)$, then $M(G/\Phi(G))=\Phi(G/\Phi(G))\simeq 1$ by $(b)$.  So $G/\Phi(G)=\mathrm{Soc}(G/\Phi(G))$ by $(a)$. Thus $G=\mathrm{\tilde F}(G)$.

If $\mathrm{\tilde F}(G)=G$, then $\mathrm{\tilde F}(G)/\Phi(G)=\mathrm{Soc}(G/\Phi(G))$. Hence $M(G/\Phi(G))=\Phi(G/\Phi(G))\simeq 1$ by $(a)$. Thus    $M(G)=\Phi(G)$ by $(b)$.\end{proof}

\section{A construction of $\mathbb{F}$-functorials}

Recall that the Fitting subgroup is the intersection of centralizers of all chief factors.
For a chief factor $H/K$ of  $G$ the subgroup $C_G^*(H/K)=HC_G(H/K)$ is called an \emph{inneriser} (see \cite[Definition 1.2.2]{s9}). It is the set of all elements of $G$ that induce inner automorphisms on $H/K$. From the definition of the generalized Fitting subgroup it follows that it is the intersection of  innerisers of all chief factors.
In this section we will obtain the similar characterization for some $\mathbb{F}$-functorials.

\begin{theorem}\label{thm2}
Let  $\varphi$ be a  Frattini functorial that satisfies: $H/K\not\leq\varphi(G/K)$ for every chief factor $H/K$ of $G$ with $\varphi(G)\leq K\leq H\leq\Phi(G)$. Then

   $$(\varphi\star\mathrm{F}^*)(G)=\bigcap_{H/K\not\leq\varphi(G/K) \textrm{ is a chief factor of }G}C^*_G(H/K).$$
\end{theorem}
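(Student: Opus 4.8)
The plan is to rewrite both sides as intersections of innerisers and then compare the families of chief factors involved. Put $L=(\varphi\star\mathrm{F}^*)(G)$, so that $L/\varphi(G)=\mathrm{F}^*(G/\varphi(G))$ by the definition of the upper product. Since the generalized Fitting subgroup is the intersection of the innerisers of all chief factors, I would apply this inside $\bar G=G/\varphi(G)$; for $\varphi(G)\le K\le H$ one has $C^*_{\bar G}(\bar H/\bar K)=C^*_G(H/K)/\varphi(G)$, because modulo $\varphi(G)\le K$ centralising $H/K$ in $G$ and centralising $\bar H/\bar K$ in $\bar G$ are the same condition. Hence
$$L=\bigcap_{\varphi(G)\le K}C^*_G(H/K),$$
the intersection running over the chief factors $H/K$ of $G$ with $\varphi(G)\le K$. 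Both $L$ and the asserted right-hand side $R=\bigcap_{H/K\not\le\varphi(G/K)}C^*_G(H/K)$ are now intersections of innerisers, and it remains to prove $L\le R$ and $R\le L$.

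For $L\le R$ I would fix a chief factor $H/K$ with $H/K\not\le\varphi(G/K)$ and show $L\le C^*_G(H/K)$. If $\varphi(G)\le K$ this is clear, since $C^*_G(H/K)$ is one of the factors defining $L$. If $\varphi(G)\not\le K$, then $\varphi(G)K/K$ is a nontrivial normal subgroup of $G/K$ lying in $\varphi(G/K)$ by $(F1)$; minimality of $H/K$ together with $H/K\not\le\varphi(G/K)$ forces $H/K\cap\varphi(G)K/K=1$, so $H\cap\varphi(G)K=K$ and the natural map yields a $G$-isomorphism $H/K\cong H\varphi(G)/K\varphi(G)$. As innerisers depend only on the $G$-isomorphism type of a chief factor and the right-hand factor lies above $\varphi(G)$, I get $L\le C^*_G(H\varphi(G)/K\varphi(G))=C^*_G(H/K)$.

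For $R\le L$ I would show $R\le C^*_G(H/K)$ for every chief factor with $\varphi(G)\le K$. When $H/K\not\le\varphi(G/K)$ this is immediate. Otherwise $\varphi(G)\le K$ and $H/K\le\varphi(G/K)\subseteq\Phi(G/K)$, so $H/K$ is an abelian $p$-chief factor, and the hypothesis on $\varphi$ forbids $K\le H\le\Phi(G)$, whence $H\not\le\Phi(G)$. Testing $H/K$ against the normal subgroup $\Phi(G)K/K\le\Phi(G/K)$ and using Dedekind's law exactly as before, $H/K$ is $G$-isomorphic either to a chief factor $A$ lying between $\varphi(G)$ and $\Phi(G)$, or to a chief factor $A'$ whose bottom contains $\Phi(G)$; by $(F1)$ this $A'$ is still contained in $\varphi$ of its quotient. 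In the first case the hypothesis gives $A\not\le\varphi(G/K')$ for the bottom $K'$ of $A$, so $C^*_G(H/K)=C^*_G(A)$ is one of the factors of $R$ and $R\le C^*_G(H/K)$ follows. The second case is the real obstacle.

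Thus everything reduces to the lemma: \emph{if $\Phi(G)\le K$ and $H/K\le\Phi(G/K)$, then $H/K$ is central in $G/K$}; indeed then $C^*_G(H/K)=G\ge R$ trivially. I would prove it by passing to the Frattini-free group $G/\Phi(G)$, reducing to the claim that a Frattini chief factor of a quotient of a Frattini-free group is central. Supposing $H/K$ non-central, $G/C_G(H/K)$ acts faithfully and irreducibly on the $p$-group $H/K$, and a Gaschütz-type complementation argument — passing to a Sylow $p$-subgroup, where $H/K\le\Phi(G/K)$ means $H/K$ has no complement, and then exploiting $\Phi(G)=1$ to manufacture a complement in $G/K$ — would yield the contradiction. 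This complementation step, which controls Frattini chief factors of quotients through the triviality of $\Phi(G)$, is where the substantive work lies; the remainder is the bookkeeping of innerisers under $G$-isomorphism carried out above.
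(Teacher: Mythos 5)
Your reduction of the left-hand side to $L=\bigcap C^*_G(H/K)$, the intersection taken over the chief factors above $\varphi(G)$, and your proof of $L\le R$ via the $G$-isomorphism $H/K\cong H\varphi(G)/K\varphi(G)$ and the invariance of innerisers under $G$-isomorphism, are both sound and close to what the paper does. The gap is in the direction $R\le L$, exactly where you locate ``the real obstacle'': the lemma you reduce everything to --- \emph{if $\Phi(G)\le K$ and $H/K\le\Phi(G/K)$ then $H/K$ is central in $G/K$} --- is false. Let $E$ be the Griess--Schmid Frattini extension of $\mathbb{A}_5$ used in part $(d.2)$ of the proof of Theorem 1, so $A=\Phi(E)$ is faithful as an $E/A$-module, and let $X=W\rtimes E$ where $W$ is the regular $\mathbb{F}_qE$-module for a prime $q$ not dividing $|E|$. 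Any nonzero submodule of $\Phi(X)\cap W$ would have a module complement $U'$ in $W$ and the non-generator property would force $X=U'\rtimes E$, so $\Phi(X)\cap W=1$, whence $\Phi(X)\le C_X(W)=W$ and $\Phi(X)=1$. Yet $X/W\simeq E$ has non-central chief factors inside $\Phi(E)$: if all of them were central, $A$ would lie in the hypercenter of $E$ and $E/A\simeq\mathbb{A}_5$ would act on $A$ as a $p$-group, contradicting faithfulness. So a Frattini chief factor of a quotient of a Frattini-free group need not be central, and no Gasch\"utz-type complementation can rescue this: $\Phi(G)=1$ imposes no useful condition on the Sylow subgroups of $G/K$.

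What is actually needed --- and what the paper's proof supplies with its intermediate subgroup $D_1$ --- is the preliminary bound $R\le\tilde{\mathrm{F}}(G)$: every element of $R$ induces inner automorphisms on all non-Frattini chief factors, hence on $\mathrm{Soc}(G/\Phi(G))$, and the self-centralizing property $C_{G/\Phi(G)}(\mathrm{Soc}(G/\Phi(G)))\le \mathrm{Soc}(G/\Phi(G))$ then forces $R/\Phi(G)\le\mathrm{Soc}(G/\Phi(G))$. With this in hand you should compute $L$ along a chief series of $G$ passing through $\varphi(G)$, $\Phi(G)$ and $R$: factors above $R$ are centralized by $R$ for free; factors between $\varphi(G)$ and $\Phi(G)$ are non-$\varphi$-covered by the hypothesis on $\varphi$; and factors between $\Phi(G)$ and $R$ lie in $\mathrm{Soc}(G/\Phi(G))$, hence are complemented and again non-$\varphi$-covered. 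Your problematic ``second case'' therefore never occurs where it matters, even though the factors it describes genuinely exist and are genuinely non-central. (The paper phrases this final step as showing directly that $D/\varphi(G)$ is quasinilpotent via the normal-series criterion of \cite[X, Lemma 13.1]{19}, but the content is the same.) In short, your inneriser bookkeeping is fine; the missing ingredient is the containment $R\le\tilde{\mathrm{F}}(G)$, not a centrality statement.
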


\begin{corollary}\label{1.4} Let   $G$ be a group. Then

$$\mathrm{\tilde F}(G)=\bigcap_{H/K \textrm{ is a non-Frattini chief factor of }G}\hspace{-10mm}HC_G(H/K) \textrm{ and } $$
$$(\Phi_\pi\star\mathrm{F}^*)(G)=\bigcap_{H/K\not\leq\mathrm{O}_\pi(\Phi(G/K)) \textrm{ is a chief factor of }G}\hspace{-10mm}HC_G(H/K). $$
\end{corollary}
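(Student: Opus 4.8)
The plan is to turn both sides into intersections of innerisers and then match them factor by factor. Put $\Phi_0=\varphi(G)$ and $Q=G/\Phi_0$. Since $\varphi$ is a Frattini functorial, $(\varphi\star\mathrm F^*)(G)/\Phi_0=\mathrm F^*(Q)$, and $\mathrm F^*(Q)$ is the intersection of the innerisers $C^*_Q(\overline H/\overline K)$ taken over all chief factors of $Q$ (the inneriser description of the generalized Fitting subgroup recalled above). These chief factors are exactly the images of the chief factors $H/K$ of $G$ with $\Phi_0\le K$; moreover $\Phi_0\le\Phi(G)\le\mathrm F(G)\le C_G(H/K)$, so $C^*_Q(\overline H/\overline K)=C^*_G(H/K)/\Phi_0$ for such a factor. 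Taking preimages I would first record the working formula
\[(\varphi\star\mathrm F^*)(G)=\bigcap_{H/K\ \text{a chief factor of }G,\ \Phi_0\le K}C^*_G(H/K).\]

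Next I would fix a chief series of $G$ running through the normal subgroups $\Phi_0\le\Phi(G)$ and split its factors into three types: (i) $H\le\Phi_0$; (ii) $\Phi_0\le K<H\le\Phi(G)$; (iii) $\Phi(G)\le K$. For type (i), property $(F1)$ for $\varphi$ applied to $G\to G/K$ gives $H/K\le\varphi(G)K/K\le\varphi(G/K)$; for type (ii) the standing hypothesis on $\varphi$ gives $H/K\not\le\varphi(G/K)$. Hence every factor of the series with $H/K\not\le\varphi(G/K)$ satisfies $\Phi_0\le K$, so the family indexing the right-hand side of the theorem is a subfamily of the one in the displayed formula; this already yields the inclusion $(\varphi\star\mathrm F^*)(G)\subseteq\bigcap_{H/K\not\le\varphi(G/K)}C^*_G(H/K)$.

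For the reverse inclusion I have to show that every factor occurring in the displayed formula but not on the right is redundant. By the trichotomy these are exactly the type (iii) factors $H_0/K_0$ (so $\Phi(G)\le K_0$) with $H_0/K_0\le\varphi(G/K_0)\le\Phi(G/K_0)$; each is abelian, since it lies in the nilpotent group $\Phi(G/K_0)$. The hard part will be to prove $\bigcap_{H/K\not\le\varphi(G/K)}C^*_G(H/K)\le C^*_G(H_0/K_0)$ for every such factor, and this is where the condition $\Phi(G)\le K_0$ is decisive. I would invoke the theory of crowns (Gaschütz; see \cite{s9}): an abelian chief factor $H_0/K_0$ with $\Phi(G)\le K_0$ is $G$-isomorphic to a complemented chief factor $H'/K'$ of $G$ — intuitively, after passing to $G/\Phi(G)$, whose Frattini subgroup is trivial, the factor must reappear complemented. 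Such $H'/K'$ is non-Frattini, so $H'/K'\not\le\Phi(G/K')$ and hence $H'/K'\not\le\varphi(G/K')$, placing it among the factors of the right-hand intersection; being abelian it has $C^*_G(H'/K')=C_G(H'/K')$, and $G$-isomorphic chief factors share the same centraliser, so $C_G(H'/K')=C_G(H_0/K_0)\le C^*_G(H_0/K_0)$. Thus the right-hand intersection is contained in $C^*_G(H_0/K_0)$, giving the reverse inclusion. I expect the only genuine obstacle to be this redundancy of the Frattini factors; note that the hypothesis on $\varphi$ is precisely what keeps the type (ii) factors out of the redundant set (where, lacking $\Phi(G)\le K_0$, the crown argument need not apply), and that choosing the chief series through both $\varphi(G)$ and $\Phi(G)$ is what guarantees $\Phi(G)\le K_0$ for the factors that remain.
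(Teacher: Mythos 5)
Your working formula $(\varphi\star\mathrm F^*)(G)=\bigcap_{\varphi(G)\le K}C^*_G(H/K)$ and the trichotomy along a chief series through $\varphi(G)$ and $\Phi(G)$ are sound, but the reverse inclusion rests on a claim that is \emph{false}: an abelian Frattini chief factor above $\Phi(G)$ need \emph{not} be $G$-isomorphic to a complemented chief factor. Gasch\"utz's crown theory gives the invariance of the number of complemented factors of a given $G$-isomorphism type across chief series; it does not guarantee that this number is positive. Concretely, let $G=V\rtimes\mathrm{SL}(2,5)$ with $V=\mathbb F_5^{\,2}$ the natural module. Then $C_G(V)=V$ forces $\Phi(G)=1$, the normal subgroups of $G$ are exactly $1<V<VZ<G$ where $Z=\mathrm Z(\mathrm{SL}(2,5))$, and $VZ/V$ is an abelian chief factor contained in $\Phi(G/V)=ZV/V$, hence Frattini and uncomplemented. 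It is the trivial $\mathbb F_2G$-module, while the only other chief factors are $V$ (of order $25$) and $G/VZ\simeq\mathbb A_5$; so no complemented chief factor of $G$ is $G$-isomorphic to it. (Here the statement survives only because $VZ/V$ is central, so its inneriser is all of $G$; replacing $\mathrm{SL}(2,5)$ by $3^{1+2}\rtimes Z_8$ with $Z_8$ acting through a nonsplit torus of $\mathrm{GL}(2,3)$ yields a non-central such factor, so the redundancy of these innerisers genuinely requires an argument.) A secondary point: the theorem's intersection runs over \emph{all} chief factors, not only those of your fixed series, so even for the forward inclusion a factor $H/K$ with $H/K\not\le\varphi(G/K)$ but $\varphi(G)\not\le K$ must be replaced by the $G$-isomorphic factor $H\varphi(G)/K\varphi(G)$ via Lemma \ref{Lem1} before your formula applies.

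The paper closes the reverse inclusion by a different route: writing $D$ for the right-hand intersection and $D_1$ for the intersection over non-Frattini factors, it first shows $D\le D_1\le\tilde{\mathrm F}(G)$ by checking that every element of $D_1/\Phi(G)$ induces an inner automorphism on $\mathrm{Soc}(G/\Phi(G))$ and then using $C_{G/\Phi(G)}(\tilde{\mathrm F}(G/\Phi(G)))\le\tilde{\mathrm F}(G/\Phi(G))$; it then produces a normal series of $D$ from $\varphi(G)$ through $\Phi(G)$ to $D$ on each factor of which $D$ induces inner automorphisms, so that $D/\varphi(G)$ is quasinilpotent and $D\le(\varphi\star\mathrm F^*)(G)$. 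This avoids any appeal to the existence of complemented companions. Finally, since you are proving the general identity of Theorem \ref{thm2}, to obtain the stated corollary you would still need to verify its hypothesis for $\varphi=\Phi_\pi$, namely that $H/K\not\le\mathrm O_\pi(\Phi(G/K))$ for chief factors between $\mathrm O_\pi(\Phi(G))$ and $\Phi(G)$, which the paper gets from the fact that $\Phi(G)/\mathrm O_\pi(\Phi(G))$ is a $\pi'$-group.
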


We need the following lemma in the proof of Theorem \ref{thm2}.

\begin{lemma}\label{Lem1}
  Let $H/K$ and $M/N$ be  $G$-isomorphic chief factors of a group $G$. Then $C_G^*(H/K)=C_G^*(M/N)$.
\end{lemma}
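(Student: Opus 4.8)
The plan is to reduce the whole statement to the conjugation action of $G$ on the two factors and to recognise the inneriser as the full preimage of the group of inner automorphisms. Write $\rho\colon G\to\mathrm{Aut}(H/K)$ and $\sigma\colon G\to\mathrm{Aut}(M/N)$ for the homomorphisms induced by conjugation; then $\ker\rho=C_G(H/K)$ and $\ker\sigma=C_G(M/N)$, while $\rho(H)$ is exactly the group $\mathrm{Inn}(H/K)$ of inner automorphisms, since $h\in H$ acts as conjugation by $hK$. The first step is the elementary identity $C_G^*(H/K)=H\,C_G(H/K)=\rho^{-1}(\mathrm{Inn}(H/K))$: an element $g$ induces an inner automorphism on $H/K$ precisely when $\rho(g)=\rho(h)$ for some $h\in H$, i.e. when $gh^{-1}\in\ker\rho$. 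The same characterisation holds for $M/N$ via $\sigma$.

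Next I would bring in the $G$-isomorphism $\phi\colon H/K\to M/N$. Because $\phi$ intertwines the two actions, $\phi(x^g)=\phi(x)^g$, the conjugation map $\phi_*\colon\mathrm{Aut}(H/K)\to\mathrm{Aut}(M/N)$, $\alpha\mapsto\phi\,\alpha\,\phi^{-1}$, is an isomorphism with $\sigma=\phi_*\circ\rho$. In particular $\ker\sigma=\ker\rho$, which already recovers the well-known fact $C_G(H/K)=C_G(M/N)$. Moreover $\phi_*$ sends conjugation by $x$ to conjugation by $\phi(x)$, so it restricts to an isomorphism $\phi_*(\mathrm{Inn}(H/K))=\mathrm{Inn}(M/N)$.

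The final step is then a purely formal preimage computation:
$$C_G^*(M/N)=\sigma^{-1}(\mathrm{Inn}(M/N))=\rho^{-1}\bigl(\phi_*^{-1}(\mathrm{Inn}(M/N))\bigr)=\rho^{-1}(\mathrm{Inn}(H/K))=C_G^*(H/K),$$
using $\sigma=\phi_*\circ\rho$ and $\phi_*^{-1}(\mathrm{Inn}(M/N))=\mathrm{Inn}(H/K)$. This yields the claimed equality in one stroke, with no appeal to the internal structure of $H/K$.

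I expect the only genuine care to be needed in making the argument uniform across the abelian and non-abelian cases. When $H/K$ is abelian one has $\mathrm{Inn}(H/K)=1$ and $H\subseteq C_G(H/K)$, so the inneriser collapses to the centralizer and the result is already immediate from $C_G(H/K)=C_G(M/N)$; the preimage formula above still applies verbatim with $\mathrm{Inn}=1$. In the non-abelian case the inner-automorphism bookkeeping does the real work, but since $\mathrm{Inn}(H/K)\cong(H/K)/\mathrm{Z}(H/K)$ transports correctly under $\phi_*$, no case split is actually required. Thus the main obstacle is not a hard step but the two supporting verifications—that $\rho(H)=\mathrm{Inn}(H/K)$ and that $\phi_*$ preserves innerness—both of which follow directly from the definitions.
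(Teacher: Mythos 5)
Your proof is correct, and it takes a genuinely different route from the paper's. The paper first notes $C_G(H/K)=C_G(M/N)=C$ and then splits into cases: for abelian factors the inneriser collapses to $C$; for non-abelian factors it shows $HC/C$ and $MC/C$ are both minimal normal subgroups of $G/C$ and invokes the fact that $G/C$ is monolithic for a non-abelian chief factor, forcing $HC/C=MC/C$. You instead characterise the inneriser intrinsically as $\rho^{-1}(\mathrm{Inn}(H/K))$ for the conjugation representation $\rho\colon G\to\mathrm{Aut}(H/K)$ (using $\rho(H)=\mathrm{Inn}(H/K)$, so $\rho^{-1}(\rho(H))=H\ker\rho=HC_G(H/K)$), and transport $\mathrm{Inn}$ along the isomorphism $\phi_*$ of automorphism groups induced by the $G$-isomorphism. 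This buys you two things: no case split (as you observe, the abelian case is just $\mathrm{Inn}=1$), and no appeal to the structure theory of non-abelian chief factors — indeed your argument never uses minimality, so it proves the statement for arbitrary $G$-isomorphic sections between normal subgroups, not only chief factors. What the paper's internal argument buys in exchange is that it stays entirely inside the subgroup lattice of $G$ and records along the way the useful fact that $C_G^*(H/K)/C$ is the unique minimal normal subgroup of $G/C$ in the non-abelian case. Both supporting verifications you flag ($\rho(H)=\mathrm{Inn}(H/K)$ and $\phi_*(\mathrm{Inn}(H/K))=\mathrm{Inn}(M/N)$, the latter because $\phi_*$ sends conjugation by $x$ to conjugation by $\phi(x)$) are indeed immediate, so there is no gap.
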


\begin{proof}
  Since $H/K$ and $M/N$ are  $G$-isomorphic chief factors of $G$, we see that $C_G(H/K)=C_G(M/N)=C$. If $H/K$ is abelian, then  $C=C_G^*(H/K)=C_G^*(M/N)$. Assume now that $H/K$ is non-abelian. Note that $K\leq C$ and $N\leq C$. Since $H/K$ and $M/N$ are non-abelian chief factors of $G$, we see that $H\cap C=K$ and $M\cap C=K$. Now $$C^*_G(H/K)/C=HC/C\simeq H/K\simeq M/N\simeq MC/C=C^*_G(M/N)/C.$$
  Note that $G/C$ has a unique minimal normal subgroup $L/C$ and $L/C\simeq H/K$. It means that $C^*_G(H/K)/C=L/C=C^*_G(M/N)/C$. Thus $C^*_G(H/K)=C^*_G(M/N)$.
\end{proof}

\begin{proof}[Proof of Theorem \ref{thm2}]
Denote
$$D=\bigcap_{H/K\not\leq\varphi(G/K) \textrm{ is a chief factor of }G}C^*_G(H/K).$$

  Let   $H/K\not\leq\varphi(G/K)$ be a chief factor of  $G$. If $(\varphi\star\mathrm{F}^*)(G)K\cap H=K$, then   $H/K$ and $(\varphi\star\mathrm{F}^*)(G)K/K$ permute elementwise.
   So $$(\varphi\star\mathrm{F}^*)(G)\leq C_G(H/K)\leq HC_G(H/K)=C^*_G(H/K).$$
   Suppose now $(\varphi\star\mathrm{F}^*)(G)K\cap H\neq K$. Then $H/K\leq(\varphi\star\mathrm{F}^*)(G)K/K$.

  Assume that $H\cap K\varphi(G)=H$. Then $H\varphi(G)= K\varphi(G)$ and
  $$H/K\subseteq H\varphi(G)/K=\varphi(G)K/K\subseteq \varphi(G/K),$$
   a contradiction.

  Hence chief factors $H\varphi(G)/(K\varphi(G))$ and $H/K$ are $G$-isomorphic. By Lemma \ref{Lem1} we have $C_G^*(H/K)=C_G^*(H\varphi(G)/(K\varphi(G)))$. Now  $(\varphi\star\mathrm{F}^*)(G)K/(K\varphi(G))$ is a quasinilpotent group and $H\varphi(G)/(K\varphi(G))\leq(\varphi\star\mathrm{F}^*)(G)K/(K\varphi(G))$.
  Since $H\varphi(G)/(K\varphi(G))$ is a chief factor of $G$ and  $(\varphi\star\mathrm{F}^*)(G)K\trianglelefteq G$, we see that $H\varphi(G)/(K\varphi(G))$ is the product of minimal normal subgroups of a quasinilpotent group $(\varphi\star\mathrm{F}^*)(G)K/(K\varphi(G))$. Now every element of $K(\varphi\star\mathrm{F}^*)(G)$ induces an inner automorphism on $H\varphi(G)/(K\varphi(G))$. So  \begin{align*}(\varphi\star\mathrm{F}^*)(G)\leq (\varphi\star\mathrm{F}^*)(G)K&=H\varphi(G)C_{(\varphi\star\mathrm{F}^*)(G)K}(H\varphi(G)/(K\varphi(G)))\\
  &\leq H\varphi(G)C_G(H\varphi(G)/(K\varphi(G))).\end{align*} Hence $(\varphi\star\mathrm{F}^*)(G)\leq D$.

 Let $$ D_1=\bigcap_{H/K \textrm{ is a non-Frattini chief factor of }G}\hspace{-10mm}HC_G(H/K).$$  It is clear that $\Phi(G)\subseteq D_1$ and $D\subseteq D_1$. Note that every element of   $D_1/\Phi(G)$ induces an inner automorphism on every  $G$-composition factor of $\mathrm{Soc}(G/\Phi(G))$ and $\mathrm{Soc}(G/\Phi(G))$ is the product of minimal normal subgroups of $G/\Phi(G)$. Thus every element $x\Phi(G)\in D_1/\Phi(G)$ induces an inner automorphism on $\mathrm{Soc}(G/\Phi(G))$. So there exists $y\Phi(G)\in\mathrm{Soc}(G/\Phi(G))$ such that $xy^{-1}\Phi(G)$ acts trivially on    $\mathrm{Soc}(G/\Phi(G))$. From $C_{G/\Phi(G)}(\mathrm{\tilde F}(G/\Phi(G)))\subseteq \mathrm{\tilde F}(G/\Phi(G))=\mathrm{Soc}(G/\Phi(G))$ it follows that $xy^{-1}\Phi(G)\in \mathrm{Soc}(G/\Phi(G))$. Hence $D_1/\Phi(G)\subseteq \mathrm{Soc}(G/\Phi(G))=\mathrm{\tilde F}(G/\Phi(G))$. Thus $D\subseteq D_1\subseteq\mathrm{\tilde F}(G)$.

Now $G$ has the following normal series
$$ 1\trianglelefteq \varphi(G)\trianglelefteq \Phi(G)\trianglelefteq D\trianglelefteq\mathrm{\tilde F}(G)\trianglelefteq G.$$
 Since $\mathrm{\tilde F}(G)/\Phi(G)=\mathrm{Soc}(G/\Phi(G))$ and $\Phi(G/\Phi(G))\simeq 1$, we see that $H/K\not\leq \varphi(G/K)$ for every chief factor of $G$ between $\Phi(G)$ and $D$. Note that the same holds for every chief factor of $G$ between $\varphi(G)$  and $\Phi(G)$. Consider the chief series of $G$ between $\varphi(G)$ and $D$ such that it contains $\Phi(G)$.      Let $N/M$ be a chief factor of  it. Then  $D=NC_G(N/M)\cap D=N(D\cap C_G(N/M))= NC_D(N/M)$. So every element of $D$ induces an inner automorphism on    $N/M$. Hence $D/\varphi(G)$ has a normal series such that every element of $D/\varphi(G)$   induces an inner automorphism on every its factor.  Thus $D/\varphi(G)$ is quasinilpotent by \cite[X, Lemma  13.1]{19}, i.e.   $D\leq (\varphi\star\mathrm{F}^*)(G)$. Thus $(\varphi\star\mathrm{F}^*)(G)=D$.
\end{proof}

\begin{proof}[Proof of Corollary \ref{1.4}]
Recall \cite{Foerster1985} that $ \Phi\star\mathrm{F}^*=\mathrm{\tilde F}$. Now the first part of Corollary \ref{1.4} directly follows from Theorem \ref{thm2}. Note that $ \mathrm{O}_\pi$ satisfies $(F1)$ and $(F2)$. Now it is clear that $ \mathrm{O}_\pi(\Phi(G))$ is a Frattini functorial. Since $\Phi(G)$ is nilpotent, we see that $\Phi(G)/ \mathrm{O}_\pi(\Phi(G))$ is a $ \pi'$-group. Hence $H/K\not\leq \mathrm{O}_\pi(\Phi(G/K))\simeq 1$ for every chief factor $H/K$ of $G$ with $\mathrm{O}_\pi(\Phi(G))\leq K<H\leq\Phi(G)$. Thus the second part of Corollary \ref{1.4} directly follows from Theorem \ref{thm2}.
\end{proof}

\section{The generalizations of the Fitting height}

One of the important invariants of a soluble group $G$ is its Fitting height $h(G)$. This invariant is defined with the help of the Fitting subgroup.  E.I.~Khukhro and P.~Shumyatsky \cite{KHUKHRO2015, Khukhro2017} defined the generalized Fitting height $h^*(G)$ of a group $G$ with the help of the generalized Fitting subgroup. They studied this height of factorized groups.  Here we introduce the height $h_\gamma(G)$ of a group $G$ which corresponds to a given $\mathbb{F}$-functorial $\gamma$.

\begin{definition}
  Let $\gamma$ be a functorial with $\gamma(G)>1$ for every non-unit group $G$. Then the $\gamma$-series of $G$ is defined  starting from $\gamma_{(0)}(G)=1$, and then by induction $\gamma_{(i+1)}(G)=(\gamma_{(i)}\star\gamma)(G)$ is the inverse image of $\gamma(G/\gamma_{(i)}(G))$. The least number $h$ such that $\gamma_{(h)}(G)=G$ is defined to be $\gamma$-height $h_\gamma(G)$ of $G$.
\end{definition}

Since $1\neq \mathrm{F}^*(G)\subseteq \gamma(G)$ for any $G\neq 1$ and  $\mathbb{F}$-functorial $\gamma$, we see that the $\gamma$-height is defined for any $ \mathbb{F}$-functorial $\gamma$.
   Note that if $G$ is a soluble group and $\gamma$ is an $\mathbb{F}$-functorial, then $h_\gamma(G)=h(G)$ is the Fitting height of $G$.
If $\gamma=\mathrm{F}^*$, then $h_\gamma(G)=h^*(G)$.

\begin{theorem}\label{len1}
  Let $\gamma$ be an $\mathbb{F}$-functorial. Then $h_{\mathrm{\tilde{F}}}(G)\leq h_\gamma(G)\leq 2h_{\mathrm{\tilde{F}}}(G)$ for any group $G$.  For any natural $n$ there exists a group $H$ with $h_{\mathrm{\tilde{F}}}(H)=n$ and $h^*(H)=2n$.
\end{theorem}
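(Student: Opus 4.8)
The plan is to transport the pointwise inclusions $\mathrm{F}^*(K)\subseteq\gamma(K)\subseteq\mathrm{\tilde{F}}(K)$, valid for every group $K$ and every $\mathbb{F}$-functorial $\gamma$ by part $(a)$ of Theorem~\ref{lattice}, to the level of the associated series. First I would isolate a \emph{monotonicity principle}: if $\delta_1,\delta_2$ are functorials such that $\delta_1$ satisfies $(F1)$ and $\delta_1(K)\subseteq\delta_2(K)$ for every $K$, then $(\delta_1)_{(i)}(G)\subseteq(\delta_2)_{(i)}(G)$ for all $i$ and all $G$. This is an induction on $i$: writing $A=(\delta_1)_{(i)}(G)\subseteq(\delta_2)_{(i)}(G)=B$ and recalling that $(\delta_1)_{(i+1)}(G)$ is the full preimage of $\delta_1(G/A)$, its image under the epimorphism $G/A\twoheadrightarrow G/B$ coincides with the image of $\delta_1(G/A)$, which by $(F1)$ for $\delta_1$ lies in $\delta_1(G/B)$ and hence, by hypothesis, in $\delta_2(G/B)=(\delta_2)_{(i+1)}(G)/B$; as $B\subseteq(\delta_2)_{(i+1)}(G)$, this gives $(\delta_1)_{(i+1)}(G)\subseteq(\delta_2)_{(i+1)}(G)$. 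Applying the principle to the pairs $(\gamma,\mathrm{\tilde{F}})$ and $(\mathrm{F}^*,\gamma)$ yields $\mathrm{F}^*_{(i)}(G)\subseteq\gamma_{(i)}(G)\subseteq\mathrm{\tilde{F}}_{(i)}(G)$ for every $i$; evaluating $\gamma_{(i)}(G)\subseteq\mathrm{\tilde{F}}_{(i)}(G)$ at $i=h_\gamma(G)$ gives $h_{\mathrm{\tilde{F}}}(G)\le h_\gamma(G)$, and evaluating $\mathrm{F}^*_{(i)}(G)\subseteq\gamma_{(i)}(G)$ at $i=h^*(G)$ gives $h_\gamma(G)\le h^*(G)$.

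It then remains to bound $h^*(G)\le 2h_{\mathrm{\tilde{F}}}(G)$. The local engine is the inclusion $\mathrm{\tilde{F}}(K)\subseteq\mathrm{F}^*_{(2)}(K)$ for every $K$: from the characterization $\mathrm{\tilde{F}}(K)/\Phi(K)=\mathrm{F}^*(K/\Phi(K))$ recalled in the introduction and the chain $\Phi(K)\subseteq\mathrm{F}(K)\subseteq\mathrm{F}^*(K)$, property $(F1)$ for $\mathrm{F}^*$ applied to $K/\Phi(K)\twoheadrightarrow K/\mathrm{F}^*(K)$ carries $\mathrm{\tilde{F}}(K)/\mathrm{F}^*(K)$ into $\mathrm{F}^*(K/\mathrm{F}^*(K))=\mathrm{F}^*_{(2)}(K)/\mathrm{F}^*(K)$. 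I would then record the elementary identity $\delta_{(j)}(G/\delta_{(k)}(G))=\delta_{(k+j)}(G)/\delta_{(k)}(G)$, which holds for any functorial $\delta$ by induction on $j$ directly from the definition of the series; taking $\delta=\mathrm{F}^*$ shows that the functorial $\Delta=\mathrm{F}^*\star\mathrm{F}^*=\mathrm{F}^*_{(2)}$ satisfies $\Delta_{(i)}=\mathrm{F}^*_{(2i)}$. As $\mathrm{\tilde{F}}$ satisfies $(F1)$ and $\mathrm{\tilde{F}}(K)\subseteq\Delta(K)$ pointwise, the monotonicity principle applied to $(\mathrm{\tilde{F}},\Delta)$ gives $\mathrm{\tilde{F}}_{(i)}(G)\subseteq\Delta_{(i)}(G)=\mathrm{F}^*_{(2i)}(G)$; evaluating at $i=h_{\mathrm{\tilde{F}}}(G)$ forces $\mathrm{F}^*_{(2h_{\mathrm{\tilde{F}}}(G))}(G)=G$, that is $h^*(G)\le 2h_{\mathrm{\tilde{F}}}(G)$. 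This completes the chain $h_{\mathrm{\tilde{F}}}(G)\le h_\gamma(G)\le h^*(G)\le 2h_{\mathrm{\tilde{F}}}(G)$.

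For the sharpness clause I would use that $\mathrm{F}^*$ is itself an $\mathbb{F}$-functorial with $h^*=h_{\mathrm{F}^*}$: it suffices to produce, for each $n$, a group $H$ with $h^*(H)=2n$ and $h_{\mathrm{\tilde{F}}}(H)\le n$, since the reverse inequality $h_{\mathrm{\tilde{F}}}(H)\ge n$ is then forced by $h^*\le 2h_{\mathrm{\tilde{F}}}$. As the base block I would take the Frattini extension $E$ constructed via \cite{Griess1978} in the proof of Theorem~\ref{lattice}: for $G\simeq\mathbb{A}_p$ there is a faithful $\mathbb{F}_pG$-module $A$ with $A\rightarrow E\twoheadrightarrow G$, $A\stackrel{G}{\simeq}\Phi(E)$ and $E/\Phi(E)\simeq G$. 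Here $C_E(A)=A$ gives $\mathrm{F}^*(E)=\mathrm{F}(E)=\Phi(E)$, while $E/\Phi(E)\simeq\mathbb{A}_p$ is quasinilpotent with $\mathrm{Soc}(E/\Phi(E))=E/\Phi(E)$, so $h^*(E)=2$ and $\mathrm{\tilde{F}}(E)=E$, i.e. $h_{\mathrm{\tilde{F}}}(E)=1$. I would then build $H=H_n$ recursively as a tower of $n$ such blocks, realising $H_n$ as an extension $1\to M\to H_n\to H_{n-1}\to 1$ in which $M$ is an $E$-type block (a normal $p$-subgroup carrying a nonabelian semisimple quotient) arranged so that $M=\mathrm{F}^*_{(2)}(H_n)=\mathrm{\tilde{F}}(H_n)$ and $H_n/M\simeq H_{n-1}$. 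Granting such an extension, the series identity above applied to $\delta=\mathrm{F}^*$ and to $\delta=\mathrm{\tilde{F}}$ yields $h^*(H_n)=h^*(H_{n-1})+2=2n$ and $\mathrm{\tilde{F}}_{(n)}(H_n)/M=\mathrm{\tilde{F}}_{(n-1)}(H_{n-1})=H_{n-1}$, whence $h_{\mathrm{\tilde{F}}}(H_n)\le n$ and the induction closes.

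The hard part, and the only genuine obstacle, is the realisation of these extensions. At each level one must guarantee that the newly adjoined nilpotent layer is \emph{exactly} the Frattini subgroup $\Phi(H_n)=\mathrm{F}^*(H_n)$ of the whole group and that the semisimple layer $M/\Phi(H_n)$ exhausts $\mathrm{Soc}(H_n/\Phi(H_n))$, so that one $\mathrm{\tilde{F}}$-step absorbs precisely two $\mathrm{F}^*$-steps. In the base case this is exactly the content of the construction \cite{Griess1978}; for the inductive step I would apply the analogous embedding to a faithful module for $H_{n-1}$, choosing the prime and the module so that the adjoined $p$-group is driven into $\Phi(H_n)$ and $M/\Phi(H_n)$ stays self-centralising in $H_n/\Phi(H_n)$. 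Once this Frattini and socle control is secured, both height computations are formal consequences of the series identity $\delta_{(j)}(G/\delta_{(k)}(G))=\delta_{(k+j)}(G)/\delta_{(k)}(G)$.
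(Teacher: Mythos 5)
Your proof of the two inequalities is correct and follows essentially the same route as the paper: the paper likewise reduces everything to the pointwise chain $\gamma(K)\leq\mathrm{\tilde F}(K)\leq \mathrm{F}^*_{(2)}(K)\leq\gamma_{(2)}(K)$ and then propagates it along the series to get $\gamma_{(n)}(G)\leq\mathrm{\tilde F}_{(n)}(G)\leq\gamma_{(2n)}(G)$. Your monotonicity principle and the verification that $\mathrm{\tilde F}(K)\subseteq\mathrm{F}^*_{(2)}(K)$ via $\mathrm{\tilde F}(K)/\Phi(K)=\mathrm{F}^*(K/\Phi(K))$ are sound.

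The sharpness clause, however, is not proved: you reduce it to the existence of a tower of extensions with very specific Frattini and socle control and then explicitly leave that realisation open, calling it ``the hard part.'' That is indeed where the entire content of this half of the theorem lies, and the naive inductive step you sketch --- take a Frattini extension of $H_{n-1}$ by a faithful module --- does not close the induction. The problem is that after such an extension one only gets $H_n/\Phi(H_n)\simeq H_{n-1}$, so $\mathrm{\tilde F}(H_n)$ is the preimage of $\mathrm{Soc}(H_{n-1})$ and $H_n/\mathrm{\tilde F}(H_n)\simeq H_{n-1}/\mathrm{Soc}(H_{n-1})$, which is not $H_{n-1}$; worse, if $\mathrm{Soc}(H_{n-1})$ has an abelian part one cannot even guarantee $\mathrm{F}^*(H_n)=\Phi(H_n)$, so an $\mathrm{\tilde F}$-step need not cost two $\mathrm{F}^*$-steps. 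The paper's missing ingredient is an intermediate wreath-product step: from $K$ it first forms $K_1=\mathbb{A}_5\wr K$ (regular wreath product), whose base group $B$ is the \emph{unique} minimal normal subgroup of $K_1$ and is non-abelian with $K_1/B\simeq K$, and only then takes a Frattini extension $A\rightarrowtail K_2\twoheadrightarrow K_1$ with $A$ faithful. Faithfulness of $A$ together with $B$ being non-abelian forces $\mathrm{F}^*(K_2)=\Phi(K_2)=A$ (otherwise $\Phi(K_2)\leq\mathrm{Z}(\mathrm{F}^*(K_2))$ would put $B$ inside $C_{K_1}(A)$), while $\mathrm{F}^*_{(2)}(K_2)=\mathrm{\tilde F}(K_2)$ is the preimage of $B$ and $K_2/\mathrm{\tilde F}(K_2)\simeq K$. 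This is exactly the ``one $\mathrm{\tilde F}$-step equals two $\mathrm{F}^*$-steps'' block you need, and iterating $K\mapsto K_2$ starting from the trivial group gives $h_{\mathrm{\tilde F}}=n$, $h^*=2n$. Without the wreath-product step your construction does not go through, so as written the second half of the statement remains unestablished.
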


\begin{proof}
  Let $\gamma$ be an $\mathbb{F}$-functorial.  Since $\Phi(G)$ and $\mathrm{Soc}(G/\Phi(G))$ are quasinilpotent, we see that $\gamma(G)\leq\mathrm{\tilde F}(G)\leq \mathrm{F}_{(2)}^*(G)\leq \gamma_{(2)}(G)$.  Now $\gamma_{(n)}(G)\leq\mathrm{\tilde F}_{(n)}(G)\leq \gamma_{(2n)}(G)$. Hence if $\mathrm{\tilde F}_{(n)}(G)=G$, then $\gamma_{(n)}(G)\leq G$ and $\gamma_{(2n)}(G)=G$. It means $h_{\mathrm{\tilde{F}}}(G)\leq h_\gamma(G)\leq 2h_{\mathrm{\tilde{F}}}(G)$.

Let $K$ be a group, $K_1$ be isomorphic to the regular wreath product of  $\mathbb{A}_5$ and $K$. Note that the base $B$  of it is the unique minimal normal subgroup of $K_1$ and non-abelian. According to \cite{Griess1978}, there is a
 Frattini $\mathbb{F}_3K_1$-module $A$ which is faithful for $K_1$ and a Frattini extension  $A\rightarrowtail K_2\twoheadrightarrow K_1$
such that $A\stackrel {K_1}{\simeq} \Phi(K_2)$ and $K_2/\Phi(K_2)\simeq K_1$.

Let denote $K_2$ by $\mathbf{f}(K)$.
Now $\mathbf{f}(K)/\mathrm{\tilde F}(\mathbf{f}(K))\simeq K$. From the definition of $h_{\mathrm{\tilde F}}$ it follows that $h_{\mathrm{\tilde F}}(\mathbf{f}(K))=h_{\mathrm{\tilde F}}(K)+1$.

Note that $\Phi(\mathbf{f}(K))\subseteq \mathrm{F}^*(\mathbf{f}(K))$. Assume that $\Phi(\mathbf{f}(K))\neq \mathrm{F}^*(\mathbf{f}(K))$. It means that $\mathrm{F}^*(\mathbf{f}(K))=\mathrm{\tilde F}(\mathbf{f}(K))$ is quasinilpotent. By \cite[X, Theorem 13.8]{19} it follows that $\Phi(\mathbf{f}(K))\subseteq \mathrm{Z}(\mathrm{F}^*(\mathbf{f}(K)))$. It means that $1<B\leq C_{K_1}(A)$. Thus $A$ is not faithful, a contradiction.

Thus  $\Phi(\mathbf{f}(K))= \mathrm{F}^*(\mathbf{f}(K))$ and $\mathbf{f}(K)/\mathrm{F}^*(\mathbf{f}(K))\simeq K_1$. Since $K_1$ has a unique minimal normal subgroup $B$ and it is non-abelian, we see that $\mathrm{F}^*(K_1)=B$. It means that $\mathbf{f}(K)/\mathrm{F}^*_{(2)}(\mathbf{f}(K))\simeq K$. From the definition of $h^*$ it follows that $h^*(\mathbf{f}(K))=h^*(K)+2$.

As usual, let $\mathbf{f}^{(1)}(K)=\mathbf{f}(K)$ and $\mathbf{f}^{(i+1)}(K)=\mathbf{f}(\mathbf{f}^{(i)}(K))$. Then   $h_{\mathrm{\tilde{F}}}(\mathbf{f}^{(n)}(1))=n$ and $h^*(\mathbf{f}^{(n)}(1))=2n$ for any natural $n$.
\end{proof}

Recall \cite[Definition 4.1.1]{PFG} that a group $G$ is called a mutually permutable product of its subgroups $A$ and $B$ if $G=AB$,  $A$ permutes with every subgroup of $B$ and    $B$ permutes with every subgroup of $A$. The products of mutually permutable subgroups is the very interesting topic of the theory of groups (see \cite[Chapter 4]{PFG}). The main result of this section is

\begin{theorem}\label{cor81}
  Let a group $G$ be the product of the mutually permutable subgroups $A$ and $B$. Then $\max\{h^*(A), h^*(B)\}\leq h^*(G)\leq \max\{h^*(A), h^*(B)\}+1$.
\end{theorem}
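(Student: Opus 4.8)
The plan is to establish the two inequalities separately, treating the lower bound as the easy direction and the upper bound as the crux. For the lower bound $\max\{h^*(A),h^*(B)\}\le h^*(G)$, I would argue that the generalized Fitting height behaves monotonically under passage to certain subgroups. The natural tool is the $\mathrm{F}^*$-series $\mathrm{F}^*_{(i)}$ and the fact that $\mathrm{F}^*$ satisfies $(F2)$ and $(F5)$-type hereditary behaviour. Concretely, I would like to show that for the mutually permutable factors $A$ and $B$ one has $\mathrm{F}^*_{(i)}(A)\le \mathrm{F}^*_{(i)}(G)$ (and similarly for $B$), so that $\mathrm{F}^*_{(h^*(G))}(G)=G$ forces $\mathrm{F}^*_{(h^*(G))}(A)=A$, giving $h^*(A)\le h^*(G)$. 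The cleanest route is to reduce to a statement about normal subgroups, using that in a mutually permutable product the factors interact well with the normal structure: the key structural facts from \cite[Chapter 4]{PFG} about mutually permutable products (e.g.\ that $A\cap B$ behaves well, and that cores/normal closures of the factors are controlled) should let me push $A$ through the $\mathrm{F}^*$-series.

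For the upper bound $h^*(G)\le \max\{h^*(A),h^*(B)\}+1$, the plan is to set $n=\max\{h^*(A),h^*(B)\}$ and show $\mathrm{F}^*_{(n+1)}(G)=G$. I would proceed by induction on $|G|$, passing to the quotient $G/\mathrm{F}^*(G)$. The idea is that $G/\mathrm{F}^*(G)$ is again a mutually permutable product of the images $A\mathrm{F}^*(G)/\mathrm{F}^*(G)$ and $B\mathrm{F}^*(G)/\mathrm{F}^*(G)$ (mutual permutability is inherited by quotients, cf.\ \cite[Chapter 4]{PFG}), and these images are quotients of $A$ and $B$ respectively, so their $h^*$ values do not exceed those of $A$ and $B$. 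The subtle point is the bookkeeping of the height: after one application of $\mathrm{F}^*$ we want the heights of the factors to drop by (at least) one, so that the inductive hypothesis yields $h^*(G/\mathrm{F}^*(G))\le n$, hence $h^*(G)\le n+1$. This requires knowing that $\mathrm{F}^*(A)$ and $\mathrm{F}^*(B)$ are ``absorbed'' into $\mathrm{F}^*(G)$ up to the quotient, i.e.\ that the factor heights genuinely decrease rather than merely staying bounded.

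The main obstacle I anticipate is precisely controlling how $\mathrm{F}^*$ of the whole group relates to $\mathrm{F}^*$ of the factors in a mutually permutable product. Unlike a direct product, where Proposition \ref{FF}(1) gives $\mathrm{F}^*(G_1\times G_2)=\mathrm{F}^*(G_1)\times\mathrm{F}^*(G_2)$ exactly, here the factors overlap and permute, so I cannot simply split $\mathrm{F}^*(G)$ along $A$ and $B$. I would need a lemma to the effect that $\mathrm{F}^*(A)$ and $\mathrm{F}^*(B)$ are subnormal (or normal after intersecting with appropriate cores) in $G$ and therefore land inside $\mathrm{F}^*(G)$ by $(F2)$, together with the converse control that $\mathrm{F}^*(G)\cap A$ contributes to $\mathrm{F}^*(A)$. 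The structure theory of mutually permutable products—particularly results stating that if one factor is contained in the generalized Fitting subgroup or that minimal normal subgroups of $G$ meet the factors nicely—should supply exactly this. The $+1$ in the bound reflects the one ``extra'' generalized-Fitting layer needed to reconcile the two factors at the bottom of the series, and making this quantitative is where the real work lies; the reduction modulo $\mathrm{F}^*(G)$ and the inductive step are then routine once the absorption lemma is in hand.
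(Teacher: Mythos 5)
Both halves of your plan rest on containments that are false, and the example $G=\mathbb{S}_3=AB$ with $A\simeq Z_3$, $B\simeq Z_2$ (the paper's own example) kills them simultaneously. Here $\mathrm{F}^*(G)=Z_3$, so $\mathrm{F}^*(B)=B\not\leq\mathrm{F}^*(G)$: the factors of a mutually permutable product are in general not subnormal in $G$, so neither the containment $\mathrm{F}^*_{(i)}(A)\leq\mathrm{F}^*_{(i)}(G)$ (your lower-bound mechanism) nor the ``absorption'' $\mathrm{F}^*(A),\mathrm{F}^*(B)\leq\mathrm{F}^*(G)$ (your upper-bound mechanism) holds. Worse, if absorption did hold in the form you describe, your induction on $G/\mathrm{F}^*(G)$ would make the heights of both factors drop at every step starting from the bottom layer and would prove $h^*(G)\leq\max\{h^*(A),h^*(B)\}$, which the same example refutes since $h^*(\mathbb{S}_3)=2$ while $\max\{h^*(Z_2),h^*(Z_3)\}=1$. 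So the $+1$ cannot be produced by the mechanism you propose.

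The paper's route is genuinely different in both directions. For the upper bound it passes not to $\mathrm{F}^*(A)$ but to the quasinilpotent residuals $A^{\mathfrak{N}^*}$ and $B^{\mathfrak{N}^*}$: these \emph{are} subnormal in $G$ (because $A'$ and $B'$ are subnormal in a mutually permutable product, \cite[Corollary 4.1.26]{PFG}), so their normal closure $C$ satisfies $h^*(C)\leq\max\{h^*(A),h^*(B)\}-1$ by the join-of-subnormal-subgroups bound (Theorem \ref{len3}) together with Lemma \ref{minusone}; the quotient $G/C$ is then a mutually permutable product of two quasinilpotent groups, and the real work is Lemma \ref{metaquasi}, which shows such a product has $h^*\leq 2$ --- the combination $-1+2$ is exactly where the $+1$ comes from. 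For the lower bound this machinery only yields $\max\{h^*(A),h^*(B)\}-1\leq h^*(G)$, and the paper needs a separate minimal-counterexample argument analysing a minimal normal subgroup $N$ of $G$, the inneriser $C^*_G(N)$ and Lemma \ref{lem4} to exclude the value $\max-1$. Neither of these two key ingredients appears in your plan, so as it stands the proposal would not close.
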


\begin{corollary}\label{cor82}
  Let a soluble group $G$ be the product of the mutually permutable subgroups $A$ and $B$. Then $\max\{h(A), h(B)\}\leq h(G)\leq \max\{h(A), h(B)\}+1$.
\end{corollary}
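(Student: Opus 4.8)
The plan is to prove Theorem~\ref{cor81} and then obtain Corollary~\ref{cor82} as a specialization. Since every soluble group satisfies $\mathrm{F}^*(G)=\mathrm{\tilde F}(G)=\mathrm{F}(G)$ and hence $h^*(G)=h(G)$ for soluble $G$, the corollary follows immediately from the theorem once established; I will remark on this at the end. For the theorem itself, the heart of the matter is the upper bound $h^*(G)\le\max\{h^*(A),h^*(B)\}+1$, since the lower bound should be the easy half.

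First I would dispose of the lower bound. The key structural fact is that in a mutually permutable product $G=AB$, the factors $A$ and $B$ interact with normal subgroups and quotients in a controlled way; in particular one expects that $h^*$ is monotone under the relevant operations. Concretely, I would aim to show that the images $A\mathrm{F}^*_{(i)}(G)/\mathrm{F}^*_{(i)}(G)$ and $B\mathrm{F}^*_{(i)}(G)/\mathrm{F}^*_{(i)}(G)$ again form a mutually permutable product in $G/\mathrm{F}^*_{(i)}(G)$ (mutual permutability is inherited by quotients, by \cite[Chapter 4]{PFG}), and that $\mathrm{F}^*$ behaves well with respect to the factors. Since $A$ is isomorphic to a subgroup expressible through the product structure, and $h^*$ cannot increase when passing to a factor of a mutually permutable product, one gets $h^*(A)\le h^*(G)$ and $h^*(B)\le h^*(G)$, giving $\max\{h^*(A),h^*(B)\}\le h^*(G)$.

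For the upper bound I would argue by induction on $|G|$ using the $\mathrm{F}^*$-series. Set $F=\mathrm{F}^*(G)=\mathrm{F}^*_{(1)}(G)$ and pass to $\bar G=G/F$. One knows $\bar G=\bar A\bar B$ is again a mutually permutable product, where $\bar A=AF/F$ and $\bar B=BF/F$. The induction hypothesis gives $h^*(\bar G)\le\max\{h^*(\bar A),h^*(\bar B)\}+1$. The crucial step is to control $h^*(\bar A)$ and $h^*(\bar B)$ in terms of $h^*(A)$ and $h^*(B)$: because $A\cap F$ contains $\mathrm{F}^*(A)$ up to the permutability relations, passing from $A$ to $\bar A=A/(A\cap F)$ should drop the generalized Fitting height by at least one, i.e. $h^*(\bar A)\le h^*(A)-1$ whenever $A\neq 1$, and similarly for $B$. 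Combining, $h^*(G)=h^*(\bar G)+1\le\max\{h^*(\bar A),h^*(\bar B)\}+2\le\max\{h^*(A),h^*(B)\}+1$, as desired.

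The main obstacle I anticipate is precisely establishing the inequality $h^*(\bar A)\le h^*(A)-1$, that is, showing that the single normal layer $F=\mathrm{F}^*(G)$ of $G$ ``absorbs'' at least one full generalized Fitting layer of each factor $A$ and $B$ simultaneously. This requires the mutual permutability hypothesis in an essential way: one must show that $\mathrm{F}^*(A)$ is swallowed by $\mathrm{F}^*(G)$, i.e. $\mathrm{F}^*(A)\le F$ or at least $\mathrm{F}^*(A)F/F$ is trivial, which is where the structure theory of mutually permutable products (the behaviour of minimal normal subgroups and the core of one factor inside the product, as developed in \cite[Chapter 4]{PFG}) must be invoked. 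Once this ``one-layer absorption'' lemma is secured, the induction closes routinely, and the tightness of the ``$+1$'' is not needed for the theorem, though it is exhibited by the examples $\mathbf{f}^{(n)}(1)$ constructed in the proof of Theorem~\ref{len1}.
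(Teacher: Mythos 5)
Your reduction of the corollary to Theorem~\ref{cor81} via $h=h^*$ on soluble groups matches the paper, but your proposed proof of the theorem has a genuine gap: the ``one-layer absorption'' claim $h^*(AF/F)\le h^*(A)-1$ for $F=\mathrm{F}^*(G)$ is simply false, and the paper's own example already refutes it. Take $G=\mathbb{S}_3=Z_3Z_2$ (a mutually permutable, even soluble, product), $A=Z_2$: then $F=\mathrm{F}^*(G)=Z_3$ and $AF/F\simeq Z_2$, so $h^*(AF/F)=1=h^*(A)$, not $h^*(A)-1=0$. Since this fails already in the soluble case with $\mathrm{F}$ in place of $\mathrm{F}^*$, restricting to the corollary does not rescue the argument. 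Without that inequality your induction only yields $h^*(G)\le\max\{h^*(A),h^*(B)\}+2$. You correctly identified this step as the main obstacle, but it is not a technical point to be supplied from \cite[Chapter 4]{PFG}; it is the wrong decomposition. The paper instead works bottom-up: $A^{\mathfrak{N}^*}$ and $B^{\mathfrak{N}^*}$ are subnormal in $G$ (because $A'$ and $B'$ are), their normal closure $C$ satisfies $h^*(C)\le\max\{h^*(A),h^*(B)\}-1$ by Lemma~\ref{minusone} and Theorem~\ref{len3}, and $G/C$ is a mutually permutable product of \emph{quasinilpotent} groups, which by a separate lemma (Lemma~\ref{metaquasi}) has $h^*\le 2$; adding gives the $+1$ bound.

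Your treatment of the lower bound is also too optimistic at the level of the theorem. Unlike the Fitting subgroup, $\mathrm{F}^*$ is not inherited by subgroups ($\mathbb{A}_4\le\mathbb{A}_5$ gives $h^*(\mathbb{A}_4)=2>1=h^*(\mathbb{A}_5)$), so ``$h^*$ cannot increase when passing to a factor'' is exactly what must be proved, and the paper devotes a full minimal-counterexample argument (using Lemma~\ref{lem4} and the behaviour of minimal normal subgroups in mutually permutable products) to establishing $\max\{h^*(A),h^*(B)\}\le h^*(G)$. For the soluble corollary itself the lower bound is indeed immediate, since $\mathrm{F}(G)\cap H\le\mathrm{F}(H)$ makes the Fitting height monotone on subgroups; but as stated your argument does not prove the theorem you are relying on.
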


\begin{example}
  Note that the symmetric group $\mathbb{S}_3$ of degree $3$ is the mutually permutable product of the cyclic groups $Z_2$ and $Z_3$  of orders $2$ and $ 3$ respectively.  Hence $h^*(\mathbb{S}_3)=\max\{h^*(Z_2), h^*(Z_3)\}+1=\max\{h(Z_2), h(Z_3)\}+1$.
\end{example}

The proof of Theorem \ref{cor81} is rather complicated and require several steps. Some of them hold not only for $h^*$. At first we will study the products of normal subgroups.

\begin{theorem}\label{len3}
Let $\gamma$ be  a functorial with $\gamma(G)>1$ that satisfies $(F1)$ and $(F2)$.

$(1)$  If $G=\times_{i=1}^n  A_i$ is the direct product of its normal subgroups $A_i$, then $h_\gamma(G)= \max\{h_\gamma(A_i)\mid 1\leq i\leq n\}$.

$(2)$ Let $G=\langle A_i\mid 1\leq i\leq n\rangle$ be the join of its subnormal subgroups $A_i$. Then $h_\gamma(G)\leq \max\{h_\gamma(A_i)\mid 1\leq i\leq n\}$. If $ \gamma$ satisfies $(F5)$, then $h_\gamma(G)= \max\{h_\gamma(A_i)\mid 1\leq i\leq n\}$.
\end{theorem}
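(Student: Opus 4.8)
The plan is to begin with a reduction that makes both parts uniform: I would first observe that every term $\gamma_{(k)}$ of the $\gamma$-series is itself a functorial satisfying $(F1)$ and $(F2)$, and also $(F5)$ whenever $\gamma$ does. This follows by induction from Proposition \ref{length0}, since $\gamma_{(0)}$ (the functorial sending every group to $1$) trivially satisfies $(F1)$, $(F2)$ and $(F5)$, and $\gamma_{(k+1)}=\gamma_{(k)}\star\gamma$. With this in hand, each stage of the $\gamma$-series behaves like a single well-conditioned functorial, which is all the machinery I will need.

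For part $(1)$, I would prove by induction on $k$ that $\gamma_{(k)}(G)=\times_{i=1}^{n}\gamma_{(k)}(A_i)$. The base case $k=0$ is immediate. For the inductive step, the hypothesis presents $\gamma_{(k)}(G)$ as $\times_i\gamma_{(k)}(A_i)$, which is normal in $G$ because each $\gamma_{(k)}(A_i)$ is characteristic in $A_i\trianglelefteq G$; hence the quotient splits as $G/\gamma_{(k)}(G)\cong\times_i\bigl(A_i/\gamma_{(k)}(A_i)\bigr)$. Iterating Proposition \ref{FF}$(1)$ applied to $\gamma$ then gives $\gamma(G/\gamma_{(k)}(G))=\times_i\gamma\bigl(A_i/\gamma_{(k)}(A_i)\bigr)$, and pulling back along the projection yields $\gamma_{(k+1)}(G)=\times_i\gamma_{(k+1)}(A_i)$. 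Since each $A_i$-series stabilizes at $A_i$ once it reaches it, I can read off that $\gamma_{(k)}(G)=G$ holds exactly when $k\geq h_\gamma(A_i)$ for all $i$, i.e. $h_\gamma(G)=\max_i h_\gamma(A_i)$.

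For part $(2)$, the first step is to note that $(F2)$, and under the extra hypothesis also $(F5)$, passes from normal to subnormal subgroups by chaining along a subnormal series $A\trianglelefteq\cdots\trianglelefteq G$: one gets $\gamma_{(k)}(A)\subseteq\gamma_{(k)}(G)$ always, and $\gamma_{(k)}(G)\cap A\subseteq\gamma_{(k)}(A)$ when $(F5)$ is available. For the upper bound, set $h=\max_i h_\gamma(A_i)$; then $A_i=\gamma_{(h)}(A_i)\subseteq\gamma_{(h)}(G)$ for each $i$ by subnormal $(F2)$, so $G=\langle A_i\mid 1\leq i\leq n\rangle\subseteq\gamma_{(h)}(G)$ and $h_\gamma(G)\leq h$. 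For the reverse inequality under $(F5)$, put $h=h_\gamma(G)$, so $\gamma_{(h)}(G)=G$; then for each $i$, subnormal $(F5)$ gives $A_i=G\cap A_i=\gamma_{(h)}(G)\cap A_i\subseteq\gamma_{(h)}(A_i)$, whence $h_\gamma(A_i)\leq h$ and $\max_i h_\gamma(A_i)\leq h_\gamma(G)$.

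The main obstacle I anticipate is the bookkeeping around subnormality rather than any deep idea: I must verify carefully that $(F5)$ really propagates down a subnormal chain, threading the intersection argument through each normal step in the correct order, and in part $(1)$ I must confirm that $\gamma_{(k)}(A_i)$ is normal in $G$ so that the quotient genuinely decomposes as a direct product before invoking Proposition \ref{FF}$(1)$. Once each $\gamma_{(k)}$ is known to be a functorial with the inherited closure properties, the remaining arguments are routine.
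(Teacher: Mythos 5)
Your proposal is correct and follows essentially the same route as the paper: both establish via Proposition \ref{length0} that each term $\gamma_{(k)}$ inherits $(F1)$, $(F2)$ (and $(F5)$), then use Proposition \ref{FF}$(1)$ for the direct-product case and the subnormal versions of $(F2)$ and $(F5)$ for the join case. Your part $(1)$ spells out the induction along the $\gamma$-series where the paper applies Proposition \ref{FF}$(1)$ directly to $\gamma_{(n)}$, but this is only a difference in level of detail, not of method.
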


\begin{proof}
Note that $\gamma_{(n)}$ satisfies $(F1)$ and $(F2)$ for every $n$ by Proposition \ref{length0}.

$(1)$ From $(1)$ of Proposition \ref{FF} it follows that if   $G=\times_{i=1}^n  A_i$, then $\gamma_{(n)}(G)=\times_{i=1}^n  \gamma_{(n)}(A_i)$. It means that $h_\gamma(G)= \max\{h_\gamma(A_i)\mid 1\leq i\leq n\}$.

$(2)$ Assume  that $G=\langle A_i\mid 1\leq i\leq n\rangle$ is the join of its subnormal subgroups $A_i$,  $h_1=\max\{h_\gamma(A_i)\mid 1\leq i\leq n\}$ and $h_2=h_\gamma(G)$. Since $\gamma_{(n)}$ satisfies $(F2)$, we see that  $\gamma_{(n)}(N)\subseteq \gamma_{(n)}(G)$ for every subnormal subgroup $N$ of $G$ and every $n$. Now $$G=\langle A_i\mid 1\leq i\leq n\rangle=\langle \gamma_{(h_1)}(A_i)\mid 1\leq i\leq n\rangle\subseteq \gamma_{(h_1)}(G)\subseteq G.$$ Hence $\gamma_{(h_1)}(G)=G$. It means that $h_2\leq h_1$.

  Suppose that $\gamma$ satisfies $(F5)$. Now  $\gamma_{(n)}$ satisfies $(F5)$
  for every $n$ by Proposition \ref{length0}. Since $\gamma_{(n)}$ satisfies $(F2)$ and $(F5)$, we see that $\gamma_{(n)}(G)\cap N=\gamma_{(n)}(N)$ for every subnormal subgroup $N$ of $G$. Now   $A_i=A_i\cap G=A_i\cap \gamma_{(h_2)}(G)=\gamma_{(h_2)}(A_i)$. It means that $h_\gamma(A_i)\leq h_2$ for every $i$. Hence $h_1\leq h_2$. Thus $h_1=h_2$.
\end{proof}

\begin{corollary}
  Let $G=\langle A_i\mid 1\leq i\leq n\rangle$ be the join of its subnormal subgroups $A_i$. Then $h^*(G)= \max\{h^*(A_i)\mid 1\leq i\leq n\}$.
\end{corollary}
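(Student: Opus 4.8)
The plan is to deduce this directly from part $(2)$ of Theorem \ref{len3} applied to the functorial $\gamma=\mathrm{F}^*$. Recall from the remarks preceding Theorem \ref{len1} that $h^*(G)=h_{\mathrm{F}^*}(G)$, so it suffices to check that $\mathrm{F}^*$ meets the hypotheses under which Theorem \ref{len3}$(2)$ yields an equality rather than a mere inequality. By Theorem \ref{lattice}, $\mathrm{F}^*$ is an $\mathbb{F}$-functorial; in particular it satisfies $(F1)$ and $(F2)$, and $\mathrm{F}^*(G)>1$ for every non-unit group $G$ since the generalized Fitting subgroup is non-trivial in every non-trivial group. Thus the inequality $h_{\mathrm{F}^*}(G)\leq\max\{h_{\mathrm{F}^*}(A_i)\mid 1\leq i\leq n\}$ is already available from the first half of Theorem \ref{len3}$(2)$; the remaining content of the corollary is the reverse inequality, which the second half of Theorem \ref{len3}$(2)$ supplies once we know that $\mathrm{F}^*$ is hereditary.

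The single point that needs checking, therefore, is that $\mathrm{F}^*$ satisfies $(F5)$, i.e. $\mathrm{F}^*(G)\cap N\subseteq\mathrm{F}^*(N)$ for every $N\trianglelefteq G$. Combined with $(F2)$, which already gives $\mathrm{F}^*(N)\subseteq\mathrm{F}^*(G)\cap N$, this amounts to the familiar identity $\mathrm{F}^*(N)=N\cap\mathrm{F}^*(G)$ valid for every normal subgroup $N$ of $G$; I would simply invoke this standard fact from \cite[X, Lemma 13.3]{19}. Should a self-contained argument be preferred, one writes $\mathrm{F}^*(G)=\mathrm{F}(G)\mathrm{E}(G)$ as the product of the Fitting subgroup and the layer, notes that $\mathrm{F}(G)\cap N=\mathrm{F}(N)$ and that the components of $G$ contained in $N$ are precisely the components of $N$, and then reassembles $\mathrm{F}^*(G)\cap N$ from these pieces to land inside $\mathrm{F}^*(N)$.

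With $(F5)$ established for $\mathrm{F}^*$, Theorem \ref{len3}$(2)$ gives $h_{\mathrm{F}^*}(G)=\max\{h_{\mathrm{F}^*}(A_i)\mid 1\leq i\leq n\}$, and rewriting $h_{\mathrm{F}^*}$ as $h^*$ completes the proof. The only conceivable obstacle is the verification of $(F5)$ for $\mathrm{F}^*$; as this is a classical hereditary property of the generalized Fitting subgroup, the corollary is essentially immediate from Theorem \ref{len3}, the substantive work having been done there.
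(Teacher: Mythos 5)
Your proposal is correct and is precisely the argument the paper intends: the corollary is stated as an immediate consequence of Theorem \ref{len3}$(2)$ applied to $\gamma=\mathrm{F}^*$, which satisfies $(F1)$, $(F2)$, is non-trivial on non-trivial groups, and satisfies $(F5)$ via the classical identity $\mathrm{F}^*(N)=N\cap\mathrm{F}^*(G)$ for $N\trianglelefteq G$. Your explicit verification of $(F5)$ is the only step the paper leaves tacit, and you handle it correctly.
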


\begin{example}
Let $E\simeq \mathbb{A}_5$. According to $(2)$ of the proof of Theorem \ref{lattice} there is an $\mathbb{F}_5E$-module $V$ such
 that $R=Rad(V)$ is a faithful irreducible $\mathbb{F}_5E$-module and $V/R$ is
 an irreducible trivial $\mathbb{F}_5E$-module.  Let $G=V\leftthreetimes E$. Now $\Phi(G)=R$ by \cite[B, Lemma 3.14]{s8}. Note that $G/\Phi(G)=G/R\simeq Z_5\times E$. So $\mathrm{\tilde{F}}(G)=\mathrm{\tilde{F}}^\infty(G)=G$ and  $h_{\mathrm{\tilde{F}}}(G)=h_{\mathrm{\tilde{F}}^\infty}(G)=1$. Note that $G=V(RE)$ where $V$ and $RE$ are normal subgroups of $G$. Since $ V$ is abelian, we see that $h_{\mathrm{\tilde{F}}}(V)=h_{\mathrm{\tilde{F}}^\infty}(V)=1$. Note that $R$ is a unique minimal normal subgroup of $RE$ and $\Phi(RE)=1$. It means that  $\mathrm{\tilde{F}}(RE)=\mathrm{\tilde{F}}^\infty(RE)=R$ and $h_{\mathrm{\tilde{F}}}(RE)=h_{\mathrm{\tilde{F}}^\infty}(RE)=2$. Thus $h_{\mathrm{\tilde{F}}}(G)<\max\{h_{\mathrm{\tilde{F}}}(V), h_{\mathrm{\tilde{F}}}(RE)\}$ and  $h_{\mathrm{\tilde{F}}^\infty}(G)<\max\{h_{\mathrm{\tilde{F}}^\infty}(V), h_{\mathrm{\tilde{F}}^\infty}(RE)\}$.
\end{example}

Let $\mathfrak{F}$ be a formation. Recall that the $ \mathfrak{F}$-\emph{residual} of a group $ G$ is the smallest normal subgroup $G^\mathfrak{F}$ of $ G$ with $ G/G^\mathfrak{F}\in\mathfrak{F}$.

\begin{lemma}\label{minusone}
  If $H\neq 1$, then $h^*(H^{\mathfrak{N}^*})=h^*(H)-1$.
\end{lemma}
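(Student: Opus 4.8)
The plan is to sandwich $h^*(H^{\mathfrak{N}^*})$ between $h^*(H)-1$ from both sides. Write $R=H^{\mathfrak{N}^*}$ and $h=h^*(H)$; since $H\neq 1$ we have $h\geq 1$. Throughout I would use that $\mathfrak{N}^*$ is a formation (Theorem \ref{trad}), hence $Q$-closed, together with the characterization that a group $Q$ is quasinilpotent precisely when $Q=\mathrm{F}^*(Q)$, so that the condition $\mathrm{F}^*_{(i+1)}(G)=G$ is equivalent to $G/\mathrm{F}^*_{(i)}(G)\in\mathfrak{N}^*$.

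First I would prove $h^*(R)\geq h-1$. Put $r=h^*(R)$, so that $\mathrm{F}^*_{(r)}(R)=R$. Since every iterate $\mathrm{F}^*_{(i)}$ satisfies $(F2)$ by Proposition \ref{length0}, applying this to $R\trianglelefteq H$ yields $R=\mathrm{F}^*_{(r)}(R)\subseteq\mathrm{F}^*_{(r)}(H)$. Consequently $H/\mathrm{F}^*_{(r)}(H)$ is a quotient of the quasinilpotent group $H/R$, hence is itself quasinilpotent, so $\mathrm{F}^*_{(r+1)}(H)=H$. Therefore $h\leq r+1$, that is, $r\geq h-1$.

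The opposite inequality $h^*(R)\leq h-1$ is where the main work lies. Because $\mathrm{F}^*_{(h)}(H)=H$, the top factor $H/\mathrm{F}^*_{(h-1)}(H)$ equals its own generalized Fitting subgroup and so is quasinilpotent; minimality of the $\mathfrak{N}^*$-residual then gives $R\subseteq\mathrm{F}^*_{(h-1)}(H)$. To convert this containment into the height bound $\mathrm{F}^*_{(h-1)}(R)=R$ I would invoke the hereditary property of $\mathrm{F}^*$, namely $\mathrm{F}^*(N)=N\cap\mathrm{F}^*(G)$ for $N\trianglelefteq G$ (property $(F5)$), which is the essential external input from \cite{19}. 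By Proposition \ref{length0} the property $(F5)$ is inherited by each upper product $\mathrm{F}^*_{(h-1)}=\mathrm{F}^*_{(h-2)}\star\mathrm{F}^*$ (induction on the index, with $\mathrm{F}^*$ supplying the base case), so $\mathrm{F}^*_{(h-1)}(R)=R\cap\mathrm{F}^*_{(h-1)}(H)=R$, whence $h^*(R)\leq h-1$.

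Combining the two inequalities gives $h^*(R)=h-1=h^*(H)-1$, as required; the degenerate case $H\in\mathfrak{N}^*$, where $R=1$ and $h=1$, is covered since $h^*(1)=0$. The step I expect to be the main obstacle is the upper bound: everything there rests on the hereditariness of $\mathrm{F}^*$ and its propagation through the $\star$-iterates, without which the mere containment $R\subseteq\mathrm{F}^*_{(h-1)}(H)$ would not bound the height of $R$ (the functorial property $(F2)$ only gives containments in the wrong direction for that purpose).
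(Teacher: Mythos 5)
Your proposal is correct and takes essentially the same approach as the paper: the lower bound $h^*(H^{\mathfrak{N}^*})\geq h^*(H)-1$ via $(F2)$ and the quasinilpotency of $H/\mathrm{F}^*_{(k)}(H)$ as a quotient of $H/H^{\mathfrak{N}^*}$, and the upper bound via $H^{\mathfrak{N}^*}\leq \mathrm{F}^*_{(h-1)}(H)$ combined with the $(F5)$-property of $\mathrm{F}^*_{(h-1)}$ supplied by Proposition~\ref{length0}. Both directions match the paper's argument step for step.
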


\begin{proof}
  Let prove that if $H\neq 1$, then $h^*(H^{\mathfrak{N}^*})=h^*(H)-1$. Let $h^*(H)=n$ and $h^*(H^{\mathfrak{N}^*})=k$. Then $\mathrm{F}_{(n-1)}^*(H)<H$ and $H/\mathrm{F}_{(n-1)}^*(H)$ is quasinilpotent. It means that $H^{\mathfrak{N}^*}\leq \mathrm{F}_{(n-1)}^*(H)$. Since $\mathrm{F}_{(n-1)}^*$ satisfies $(F5)$, we see that $\mathrm{F}_{(n-1)}^*(H^{\mathfrak{N}^*})=H^{\mathfrak{N}^*}$. Hence  $k\leq n-1$.

 Note that $H=\mathrm{F}_{(k)}^*(H^{\mathfrak{N}^*})\leq \mathrm{F}_{(k)}^*(H)$. It means that $H/\mathrm{F}_{(k)}^*(H)$ is quasinilpotent. Hence $k\geq n-1$. Thus $k=n-1$.
\end{proof}

The next step in the proof of Theorem \ref{cor81} is to study mutually permutable products of quasinilpotent subgroups. We will need the following definitions and results in the proof.

A chief factor $H/K$ of  $G$ is called   $\mathfrak{X}$-\emph{central} in $G$ provided    $(H/K)\rtimes (G/C_G(H/K))\in\mathfrak{X}$ (see \cite[p. 127--128]{s6} or \cite[1, Definition 2.2]{Guo2015}). A normal subgroup $N$ of $G$ is said to be $\mathfrak{X}$-\emph{hypercentral} in $G$ if $N=1$ or $N\neq 1$ and every chief factor of $G$ below $N$ is $\mathfrak{X}$-central. The symbol $\mathrm{Z}_\mathfrak{X}(G)$ denotes the $\mathfrak{X}$-\emph{hypercenter} of $G$, that is, the product of all normal $\mathfrak{X}$-\emph{hypercentral} in $G$ subgroups. According to \cite[Lemma 14.1]{s6} or \cite[1, Theorem 2.6]{Guo2015} $\mathrm{Z}_\mathfrak{X}(G)$ is the largest normal $\mathfrak{X}$-hypercentral subgroup of $G$. If $\mathfrak{X}=\mathfrak{N}$ is the class of all nilpotent groups, then $\mathrm{Z}_\mathfrak{N}(G)=\mathrm{Z}_\infty(G)$ is the hypercenter of $G$.

If $\mathfrak{F},\mathfrak{H},\mathfrak{K}\neq\emptyset$ are formations, then $\mathfrak{FH}=(G\mid G^\mathfrak{H}\in\mathfrak{F})$ is a formation, $G^{\mathfrak{FH}}=(G^\mathfrak{H})^\mathfrak{F}$ and $(\mathfrak{FH})\mathfrak{K}=\mathfrak{F}(\mathfrak{HK})$  \cite[IV, Theorem 1.8]{s8}. That is why the class $(\mathfrak{N}^*)^n=\underbrace{\mathfrak{N}^*\dots\mathfrak{N}^*}_n$ is a well defined formation.

\begin{lemma}\label{form}
  Let $n$ be a natural number.  Then   $(\mathfrak{N}^*)^n=(G\mid h^*(G)\leq n)=(G\mid G=\mathrm{Z}_{(\mathfrak{N}^*)^n}(G))$.
\end{lemma}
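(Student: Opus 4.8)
The plan is to prove the two asserted equalities separately, writing $\mathfrak{F}=(\mathfrak{N}^*)^n$ throughout.

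For the equality $(\mathfrak{N}^*)^n=(G\mid h^*(G)\leq n)$ I would argue by induction on $n$. The base case $n=1$ is just the definition: $(\mathfrak{N}^*)^1=\mathfrak{N}^*=(G\mid\mathrm{F}^*(G)=G)=(G\mid h^*(G)\leq 1)$. For $n\geq 2$ I would use the associativity of the formation product recalled before the statement to write $(\mathfrak{N}^*)^n=(\mathfrak{N}^*)^{n-1}\mathfrak{N}^*$, so that by the definition $\mathfrak{G}\mathfrak{H}=(G\mid G^{\mathfrak{H}}\in\mathfrak{G})$ one has $G\in(\mathfrak{N}^*)^n$ if and only if $G^{\mathfrak{N}^*}\in(\mathfrak{N}^*)^{n-1}$. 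When $G\neq 1$, Lemma \ref{minusone} gives $h^*(G^{\mathfrak{N}^*})=h^*(G)-1$, and the induction hypothesis turns $G^{\mathfrak{N}^*}\in(\mathfrak{N}^*)^{n-1}$ into $h^*(G^{\mathfrak{N}^*})\leq n-1$, that is $h^*(G)\leq n$; the case $G=1$ is trivial. This settles the first equality and is essentially bookkeeping with Lemma \ref{minusone}.

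For the equality $(G\mid h^*(G)\leq n)=(G\mid G=\mathrm{Z}_{\mathfrak{F}}(G))$ I would appeal to the theory of hypercentral embedding for saturated formations. First I would record that $\mathfrak{N}^*$ is a saturated formation and that saturation is preserved under formation products, so that $\mathfrak{F}=(\mathfrak{N}^*)^n$ is saturated. Then I would invoke the standard chief-factor characterization of a saturated (local) formation: a group $G$ lies in $\mathfrak{F}$ if and only if every chief factor of $G$ is $\mathfrak{F}$-central, equivalently $G=\mathrm{Z}_{\mathfrak{F}}(G)$, where an $\mathfrak{F}$-central chief factor is defined via $(H/K)\rtimes(G/C_G(H/K))\in\mathfrak{F}$ exactly as above. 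Combining this with the first equality yields $(G\mid G=\mathrm{Z}_{\mathfrak{F}}(G))=\mathfrak{F}=(\mathfrak{N}^*)^n=(G\mid h^*(G)\leq n)$, which completes the proof.

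The main obstacle is this second equality, and more precisely the nontrivial implication $G=\mathrm{Z}_{\mathfrak{F}}(G)\Rightarrow G\in\mathfrak{F}$. It rests on two points that must be pinned down carefully: that $(\mathfrak{N}^*)^n$ is genuinely saturated (for which one needs the saturation of $\mathfrak{N}^*$ itself together with the stability of saturation under the formation product), and that the semidirect-product definition of an $\mathfrak{F}$-central chief factor used here agrees with the membership criterion coming from a local definition of $\mathfrak{F}$. Once these are in place, the equivalence between hypercentral embedding and membership for saturated formations does the remaining work, and no further computation is required.
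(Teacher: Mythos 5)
Your handling of the first equality is fine and is essentially the paper's argument: both rest entirely on Lemma \ref{minusone}, and your induction via $(\mathfrak{N}^*)^n=(\mathfrak{N}^*)^{n-1}\mathfrak{N}^*$ is just a repackaging of the paper's direct observation that $h^*(G)\leq n$ if and only if $G^{(\mathfrak{N}^*)^n}=1$.

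The second equality, however, contains a genuine gap: your premise that $\mathfrak{N}^*$ is a \emph{saturated} formation is false. The class of quasinilpotent groups is not saturated, and the paper itself supplies counterexamples: in part $(d.2)$ of the proof of Theorem \ref{lattice} one has a Frattini extension $E$ of a non-abelian simple group $G$ with $A\stackrel{G}{\simeq}\Phi(E)$ faithful, so that $E/\Phi(E)\simeq G$ is quasinilpotent while $\mathrm{F}^*(E)=\Phi(E)<E$, i.e.\ $E\notin\mathfrak{N}^*$. (The same phenomenon is exploited in the proof of Theorem \ref{len1}, where $\Phi(\mathbf{f}(K))=\mathrm{F}^*(\mathbf{f}(K))$.) Consequently you cannot appeal to the Gasch\"utz--Lubeseder--Schmid theory of local (saturated) formations, nor to the stability of saturation under formation products, to obtain the hypercentral characterization. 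The correct route, and the one the paper takes, is to note that $\mathfrak{N}^*$ is a \emph{composition} (Baer-local, solubly saturated) formation, that products of composition formations are again composition formations \cite[Theorem 7.9]{s6}, and that for a composition formation $\mathfrak{F}$ one still has $\mathfrak{F}=(G\mid G=\mathrm{Z}_{\mathfrak{F}}(G))$ by \cite[1, Theorem 2.6]{Guo2015}, where $\mathfrak{F}$-centrality is exactly the semidirect-product condition $(H/K)\rtimes(G/C_G(H/K))\in\mathfrak{F}$ used in the paper. With that substitution your argument goes through; as written, the step ``$G=\mathrm{Z}_{\mathfrak{F}}(G)\Rightarrow G\in\mathfrak{F}$'' is unsupported.
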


\begin{proof}
 It is well known that the class of all quasinilpotent groups is a composition (or Baer-local, or solubly saturated) formation (see \cite[Example 2.2.17]{s9}).  According to  \cite[Theorem 7.9]{s6} $(\mathfrak{N}^*)^n $ is a composition  formation. Now $(\mathfrak{N}^*)^n=(G\mid G=\mathrm{Z}_{(\mathfrak{N}^*)^n}(G))$ by \cite[1, Theorem 2.6]{Guo2015}.

From Lemma \ref{minusone} it follows that if $G\in(G\mid h^*(G)\leq n)$, then $G^{(\mathfrak{N}^*)^n}=1$. It means that $(G\mid h^*(G)\leq n)\subseteq (\mathfrak{N}^*)^n$. Assume that there is a group $G\in(\mathfrak{N}^*)^n$ with $h^*(G)>n$. Then $h^*(G^{(\mathfrak{N}^*)^n})>0$ by Lemma \ref{minusone}. It means that $G^{(\mathfrak{N}^*)^n}\neq 1$, a contradiction. Therefore $(\mathfrak{N}^*)^n\subseteq (G\mid h^*(G)\leq n)$. Thus $(\mathfrak{N}^*)^n=(G\mid h^*(G)\leq n)$.
\end{proof}

\begin{lemma}\label{metaquasi}
   If a group $G=AB$ is a product of mutually permutable quasinilpotent subgroups $A$ and $B$, then $h^*(G)\leq 2$.\end{lemma}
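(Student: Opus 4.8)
The plan is to translate the conclusion $h^*(G)\le 2$ into the membership $G\in(\mathfrak N^*)^2$, which is legitimate by Lemma \ref{form}, and then to argue by induction on $|G|$. The point of this reformulation is that $(\mathfrak N^*)^2$ is a formation (indeed a composition formation, by Lemma \ref{form}), so I can exploit closure properties rather than manipulating the $\mathrm F^*$-series by hand. Both hypotheses descend to quotients: if $G=AB$ is a mutually permutable product of the quasinilpotent subgroups $A$ and $B$ and $N\trianglelefteq G$, then $G/N=(AN/N)(BN/N)$ is again a mutually permutable product (\cite[Chapter 4]{PFG}) of the quasinilpotent subgroups $AN/N$ and $BN/N$. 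Hence every proper quotient of $G$ lies in $(\mathfrak N^*)^2$ by the inductive hypothesis, and the work is concentrated in reconstructing $G$ itself from its proper quotients.

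First I would dispose of the non-monolithic case. If $G$ has two distinct minimal normal subgroups $N_1\neq N_2$, then $N_1\cap N_2=1$, so $G$ embeds as a subdirect product into $G/N_1\times G/N_2$; since both factors lie in the formation $(\mathfrak N^*)^2$ and a formation is closed under subdirect products, $G\in(\mathfrak N^*)^2$. Thus I may assume $G$ is monolithic, with socle $N=\mathrm{Soc}(G)$. If $N\le\Phi(G)$, then $N$ is abelian (a minimal normal subgroup of $G$ lying inside the nilpotent group $\Phi(G)$ is central and elementary abelian); since $(\mathfrak N^*)^2$ is solubly saturated by Lemma \ref{form}, it is closed under Frattini extensions by abelian normal subgroups, and from $G/N\in(\mathfrak N^*)^2$ with $N$ abelian and $N\le\Phi(G)$ I conclude $G\in(\mathfrak N^*)^2$. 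This leaves exactly the case in which $G$ is monolithic with $N\not\le\Phi(G)$, i.e. there is a maximal subgroup $M$ with $\mathrm{Core}_G(M)=1$, so that $G$ is a primitive group with socle $N$.

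The primitive case is the heart of the proof and is where the hypotheses on $A$ and $B$ must be used in an essential way. For a monolithic primitive group one has $\mathrm F^*(G)=\mathrm{Soc}(G)=N$ (whether $N$ is abelian, giving $\mathrm F^*(G)=\mathrm F(G)=N=C_G(N)$, or non-abelian, giving $\mathrm F^*(G)=E(G)=N$ and $C_G(N)=1$), so $h^*(G)\le 2$ is equivalent to the statement that the primitive quotient $G/N$ is \emph{quasinilpotent}. The naive induction only delivers $h^*(G/N)\le 2$, hence $h^*(G)\le 3$, and sharpening this to $h^*(G)\le 2$ is the whole difficulty. My plan here is to feed in the structure theory of mutually permutable products (\cite[Chapter 4]{PFG}): the strong restrictions that mutual permutability imposes on minimal normal subgroups should force $N$ into one of the factors, say $N\le A$, so that $A=N(A\cap M)$ for a suitable complement $M$; then the permutability of $B$ with the subgroups of $A$ is used to control the action of the factors on $N$ and thereby to show that $G/N$ inherits enough from the quasinilpotency of $A$ and $B$ to be quasinilpotent.

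The main obstacle is precisely this last step. Induction is powerless there, because the top quotient $G/N$ is itself a mutually permutable product of quasinilpotent groups and so by itself only reproduces the bound $h^*\le 2$; the required improvement from $3$ to $2$ must come entirely from the rigidity of the embedding of a non-Frattini socle into a mutually permutable product of quasinilpotent factors. I therefore expect the technical core to be the analysis of how $A$, $B$ and their intersections with $N$ interact in the primitive configuration, and the invocation of the appropriate minimal-normal-subgroup lemmas of \cite[Chapter 4]{PFG} to pin down the action of $G$ on $N$ and conclude that $G/\mathrm{Soc}(G)$ is quasinilpotent.
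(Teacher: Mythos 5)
Your setup coincides with the paper's frame: the translation of $h^*(G)\le 2$ into $G\in(\mathfrak{N}^*)^2$ via Lemma \ref{form}, the induction/minimal-counterexample step, the passage of both hypotheses to quotients by \cite[Lemma 4.1.10]{PFG}, and the reduction to a monolithic group are all exactly what the paper does (your separate treatment of the case $N\le\Phi(G)$ via soluble saturation is correct but turns out to be unnecessary). However, the proof stops precisely where the content of the lemma begins. In the monolithic case you only describe what \emph{should} happen --- that mutual permutability ``should force $N$ into one of the factors'' and that permutability ``is used to control the action \dots and thereby to show that $G/N$ inherits enough \dots to be quasinilpotent'' --- and you explicitly defer ``the technical core'' to unspecified lemmas of \cite[Chapter 4]{PFG}. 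Since every earlier step is routine formation-theoretic bookkeeping, this deferred step is the whole lemma, and as written the argument has a genuine gap.

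It is also worth noting that the paper never proves your target statement ``$G/N$ is quasinilpotent'' directly; it proves the weaker local statement $N\le\mathrm{Z}_{(\mathfrak{N}^*)^2}(G)$, i.e.\ $N\rtimes(G/C_G(N))\in(\mathfrak{N}^*)^2$, which together with $G/N\in(\mathfrak{N}^*)^2$ gives $G=\mathrm{Z}_{(\mathfrak{N}^*)^2}(G)$ by Lemma \ref{form}. The missing case analysis runs as follows. By \cite[Theorem 4.3.11]{PFG} one has $A_GB_G\neq 1$, so the unique minimal normal subgroup $N$ lies in, say, $A$. If $N\cap B=1$, then $A\le C_G(N)$ or $B\le C_G(N)$ by \cite[Lemma 4.3.3(5)]{PFG}, and $N\rtimes(G/C_G(N))$ is an epimorphic image of $N\rtimes A$ or $N\rtimes B$, both of which lie in $(\mathfrak{N}^*)^2$ because $N$ and the quotient are quasinilpotent. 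Otherwise $N\le A\cap B$ by \cite[Lemma 4.3.3(4)]{PFG}; if $N$ is abelian, quasinilpotency of $A$ and $B$ forces $A/C_A(N)$ and $B/C_B(N)$ to be $p$-groups, so $G/C_G(N)=(AC_G(N)/C_G(N))(BC_G(N)/C_G(N))$ is a $p$-group acting on a $p$-chief factor and hence trivial; if $N$ is non-abelian, then $N$ is a product of minimal normal subgroups of each of $A$ and $B$, so every element of $A$ and of $B$ --- hence of $G=AB$ --- induces an inner automorphism on $N$, giving $G=NC_G(N)$. In every branch $N$ is $(\mathfrak{N}^*)^2$-hypercentral, which is the sharpening from $h^*\le 3$ to $h^*\le 2$ that your sketch identifies but does not supply; the hypercentral reformulation is what allows the quasinilpotency of $A$ and $B$ to be exploited one factor at a time.
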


\begin{proof}
To prove this lemma we need only to prove that if a group $G=AB$ is a product of mutually permutable quasinilpotent subgroups $A$ and $B$, then $G\in(\mathfrak{N}^*)^2$ by Lemma \ref{form}.
 Assume the contrary. Let $G$ be a minimal order counterexample.

$(1)$ \emph{$G$ has a unique minimal normal subgroup $N$ and    $G/N\in (\mathfrak{N}^*)^2$.}

Note that $G/N$ is a mutually permutable product of quasinilpotent subgroups $(AN/N)$ and $(BN/N)$ by \cite[Lemma 4.1.10]{PFG}. Hence $G/N\in (\mathfrak{N}^*)^2$ by our assumption. Since $(\mathfrak{N}^*)^2$ is a formation, we see that $G$ has a unique minimal normal subgroup. 
According to \cite[Theorem 4.3.11]{PFG} $A_GB_G\neq 1$.  WLOG we may assume that $G$ has a minimal normal subgroup  $N\leq A$.

$(2)$ $N\leq A\cap B$.

Suppose that $N\cap B=1$. Then $A\leq C_G(N)$ or $B\leq C_G(N)$ by \cite[Lemma 4.3.3(5)]{PFG}.
If $A\leq C_G(N)$, then $N\rtimes G/C_G(N)\simeq N\rtimes B/C_B(N)\in(\mathfrak{N}^*)^2$.
If $B\leq C_G(N)$, then $N\rtimes G/C_G(N)\simeq N\rtimes A/C_A(N)\in(\mathfrak{N}^*)\subseteq(\mathfrak{N}^*)^2$ by \cite[Corollary 2.2.5]{s9}. In both cases $N\leq \mathrm{Z}_{(\mathfrak{N}^*)^2}(G)$. It means that $G\in(\mathfrak{N}^*)^2$, a contradiction.
Now $N\cap B\neq 1$. Hence $N\leq A\cap B$ by \cite[Lemma 4.3.3(4)]{PFG}.

$(3)$ \emph{$N$ is non-abelian.}

Assume that $N$ is abelian. Since $A$ is quasinilpotent, we see that   $A/C_A(N)$ is a $p$-group. By analogy $B/C_B(N)$ is a $p$-group. Note that $A/C_A(N)\simeq AC_G(N)/C_G(N)$ and $B/C_B(N)\simeq BC_G(N)/C_G(N)$. From $G=AB$ it follows that $G/C_G(N)$ is a $p$-group. Since $N$ is a chief factor of $G$, we see that       $G/C_G(N)\simeq 1$. So $N\leq\mathrm{Z}_\infty(G)\leq \mathrm{Z}_{(\mathfrak{N}^*)^2}(G)$. Thus $G\in(\mathfrak{N}^*)^2$, a contradiction.
It means that $N$ is non-abelian.

$(4)$ \emph{The final contradiction.}

Now $N$ is a direct product of minimal normal subgroups of $A$. Since $A$ is quasinilpotent, we see that every element of $A$ induces an inner automorphism on every  minimal normal subgroup of $A$. Hence     every element of $A$ induces an inner automorphism on $N$. By analogy  every element of $B$ induces an inner automorphism on $N$. From $G=AB$ it follows that  every element of $G$ induces an inner automorphism on $N$. So $NC_G(N)=G$ or $G/C_G(N)\simeq N$. Now $N\rtimes(G/C_G(N))\in(\mathfrak{N}^*)^2$. It means that $N\leq \mathrm{Z}_{(\mathfrak{N}^*)^2}(G)$. Thus $G\in(\mathfrak{N}^*)^2$ and $h^*(G)\leq 2$, the final contradiction.
\end{proof}

Let $\gamma$ be an $\mathbb{F}$-functorial.  Now we are ready to give some bounds on the $ \gamma$-height of a mutually permutable product.

\begin{theorem}\label{len2}
  Assume that $\gamma$ satisfies $(F1)$, $(F2)$ and $(F3)$. If a group $G=AB$ is the mutually permutable product of its subgroups $A$ and $B$, then  $h_\gamma(G)\leq \max\{h_\gamma(A^{\mathfrak{N^*}}),h_\gamma(B^{\mathfrak{N^*}})\}+2$.
Moreover, if $\gamma$ satisfies $(F5)$, then $\max\{h_\gamma(A^{\mathfrak{N}^*}), h_\gamma(B^{\mathfrak{N}^*})\}\leq h_\gamma(G)$.
\end{theorem}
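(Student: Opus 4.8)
The plan is to reduce the theorem to two ingredients: that the $\gamma$-series always dominates the $\mathrm{F}^*$-series, and that the residuals $A^{\mathfrak{N}^*}$ and $B^{\mathfrak{N}^*}$ sit subnormally in $G$. First I would set up the bookkeeping for the $\gamma$-series. Iterating Proposition \ref{length0} (and using $\gamma_{(i+1)}=\gamma_{(i)}\star\gamma$), every $\gamma_{(n)}$ satisfies $(F1)$ and $(F2)$, and also $(F5)$ whenever $\gamma$ does; hence $(F2)$ and $(F5)$ extend from normal to subnormal subgroups, so that $\gamma_{(n)}(N)\subseteq\gamma_{(n)}(G)$ for $N$ subnormal in $G$, with equality $\gamma_{(n)}(G)\cap N=\gamma_{(n)}(N)$ once $(F5)$ is available. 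I would also record the shift identity $\gamma_{(i+j)}(G)/\gamma_{(i)}(G)=\gamma_{(j)}(G/\gamma_{(i)}(G))$, proved by induction on $j$ directly from the definition of $\star$, together with the monotonicity of the series. Finally, since $\gamma$ satisfies $(F2)$ and $(F3)$, part $(2)$ of Proposition \ref{FF} gives $\mathrm{F}^*(G)\subseteq\gamma(G)$ for every $G$; an induction on $i$ using $(F1)$ for $\mathrm{F}^*$ upgrades this to $\mathrm{F}^*_{(i)}(G)\subseteq\gamma_{(i)}(G)$, so that $h_\gamma(G)\le h^*(G)$ for every group $G$.

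For the upper bound put $m=\max\{h_\gamma(A^{\mathfrak{N}^*}),h_\gamma(B^{\mathfrak{N}^*})\}$. By definition of $m$ one has $\gamma_{(m)}(A^{\mathfrak{N}^*})=A^{\mathfrak{N}^*}$ and $\gamma_{(m)}(B^{\mathfrak{N}^*})=B^{\mathfrak{N}^*}$; invoking the subnormality of $A^{\mathfrak{N}^*}$ and $B^{\mathfrak{N}^*}$ in $G$ together with $(F2)$ for $\gamma_{(m)}$ on subnormal subgroups, I obtain $A^{\mathfrak{N}^*}\subseteq\gamma_{(m)}(G)$ and $B^{\mathfrak{N}^*}\subseteq\gamma_{(m)}(G)$. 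Passing to $\bar G=G/\gamma_{(m)}(G)$, the factor $\bar A=A\gamma_{(m)}(G)/\gamma_{(m)}(G)\cong A/(A\cap\gamma_{(m)}(G))$ is a quotient of the quasinilpotent group $A/A^{\mathfrak{N}^*}$, hence quasinilpotent, and likewise for $\bar B$; by \cite[Lemma 4.1.10]{PFG}, $\bar G=\bar A\bar B$ is again a mutually permutable product. Lemma \ref{metaquasi} then gives $h^*(\bar G)\le 2$, so $h_\gamma(\bar G)\le h^*(\bar G)\le 2$ and $\gamma_{(2)}(\bar G)=\bar G$. By the shift identity $\gamma_{(m+2)}(G)/\gamma_{(m)}(G)=\gamma_{(2)}(\bar G)=\bar G$, i.e. $\gamma_{(m+2)}(G)=G$, whence $h_\gamma(G)\le m+2$.

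For the lower bound assume in addition that $\gamma$ satisfies $(F5)$, and set $n=h_\gamma(G)$, so $\gamma_{(n)}(G)=G$. Since $\gamma_{(n)}$ satisfies both $(F2)$ and $(F5)$, it fulfils $\gamma_{(n)}(G)\cap N=\gamma_{(n)}(N)$ for every subnormal $N$; taking $N=A^{\mathfrak{N}^*}$ (subnormal in $G$) gives $\gamma_{(n)}(A^{\mathfrak{N}^*})=G\cap A^{\mathfrak{N}^*}=A^{\mathfrak{N}^*}$, so $h_\gamma(A^{\mathfrak{N}^*})\le n$, and symmetrically $h_\gamma(B^{\mathfrak{N}^*})\le n$. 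Therefore $\max\{h_\gamma(A^{\mathfrak{N}^*}),h_\gamma(B^{\mathfrak{N}^*})\}\le h_\gamma(G)$, which is the asserted inequality.

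The load-bearing step, and the one I expect to be the main obstacle, is the subnormality of $A^{\mathfrak{N}^*}$ and $B^{\mathfrak{N}^*}$ in the mutually permutable product $G=AB$: this is precisely what lets the functorial inequalities $(F2)$ and $(F5)$ transport residuals of the factors into the $\gamma$-series of $G$. It is delicate because $\mathfrak{N}^*$ is only a composition (Baer-local) formation and not a saturated one, so the standard subnormality theorems for $\mathfrak{F}$-residuals of saturated formations do not apply directly. I would therefore either cite the corresponding statement on $\mathfrak{N}^*$-residuals of mutually permutable products from \cite{PFG}, or establish it separately by an induction on $|G|$ running the minimal-normal-subgroup analysis already used in the proof of Lemma \ref{metaquasi}, via \cite[Lemma 4.3.3]{PFG}.
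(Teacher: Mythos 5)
Your argument is essentially the paper's: both reduce the upper bound to Lemma \ref{metaquasi} applied to the quotient of $G$ by a normal subgroup containing $A^{\mathfrak{N}^*}$ and $B^{\mathfrak{N}^*}$ (the paper takes the normal closure $C=\langle A^{\mathfrak{N}^*},B^{\mathfrak{N}^*}\rangle^G$ and invokes Theorem \ref{len3}$(2)$, you take $\gamma_{(m)}(G)$ directly; the two are interchangeable), and both get the lower bound from $(F5)$ applied to the subnormal residuals. The one step you leave open --- subnormality of $A^{\mathfrak{N}^*}$ and $B^{\mathfrak{N}^*}$ in $G$ --- is easier than you anticipate and needs no induction: by \cite[Corollary 4.1.26]{PFG} the derived subgroups $A'$ and $B'$ are subnormal in the mutually permutable product $G=AB$, and since $A/A'$ is abelian (hence quasinilpotent) one has $A^{\mathfrak{N}^*}\leq A'$ with $A^{\mathfrak{N}^*}\trianglelefteq A$, so $A^{\mathfrak{N}^*}\trianglelefteq A'$ is subnormal in $G$, and likewise for $B$. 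With that observation inserted, your proof is complete and matches the paper's.
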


\begin{proof}
From $(2)$ of Proposition \ref{FF} it follows that $\mathrm{F}^*(G)\subseteq\gamma(G)$ for every group $G$. It means that $h_\gamma(G)\leq h^*(G)$  holds for every groups $G$.

Let a group $G=AB$ be the product of mutually permutable subgroups $A$ and $B$. Note that $A'$ and $B'$ are subnormal in $G$ by \cite[Corollary 4.1.26]{PFG}.  Since $H^{\mathfrak{N}^*}\trianglelefteq H'$ holds for every group $H$, subgroups $A^{\mathfrak{N}^*}$ and $B^{\mathfrak{N}^*}$ are subnormal in $G$. Let $C=\langle A^{\mathfrak{N}^*}, B^{\mathfrak{N}^*}\rangle^G=\langle\{(A^{\mathfrak{N}^*})^x\mid x\in G\}\cup\{(B^{\mathfrak{N}^*})^x\mid x\in G\}\rangle$. Then by $(2)$ of Theorem \ref{len3}
$$h_\gamma(C)\leq\max\left\{\{(h_\gamma(A^{\mathfrak{N}^*})^x)\mid x\in G\}\cup\{(h_\gamma(B^{\mathfrak{N}^*})^x)\mid x\in G\}\right\}=\max\{h_\gamma(A^{\mathfrak{N}^*}), h_\gamma(B^{\mathfrak{N}^*})\}.$$
Now $G/C=(AC/C)(BC/C)$ is a mutually permutable product of quasinilpotent subgroups $AC/C$ and $BC/C$ by
\cite[Lemma 4.1.10]{PFG}. It means that $h_\gamma(G/C)\leq h^*(G/C)\leq 2$ by Lemma \ref{metaquasi}. Thus  $h_\gamma(G)\leq h_\gamma(C)+h_\gamma(G/C)\leq  \max\{h_\gamma(A^{\mathfrak{N}^*}), h_\gamma(B^{\mathfrak{N}^*})\}+2$.

Assume that $ \gamma$ satisfies $(F5)$. Now $\gamma_{(n)}$ satisfies $(F5)$ for all $n$ by Proposition \ref{length0}. It means that   $h_\gamma(G)\geq h_\gamma(C)=\max\{h_\gamma(A^{\mathfrak{N}^*}), h_\gamma(B^{\mathfrak{N}^*})\}$.
\end{proof}

\begin{lemma}\label{lem4}
  Let $N$ be a normal subgroup of a group $G$. If $N$ is a direct product of isomorphic simple groups and $h^*(G/C_G^*(N))\leq k-1$, then $\mathrm{F}^*_{(k)}(G/N)=\mathrm{F}^*_{(k)}(G)/N$.
\end{lemma}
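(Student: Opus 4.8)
The plan is to establish the two inclusions of the asserted equality separately. Throughout I write $\mathrm{F}^*_{(k)}$ for the $k$-th term of the generalized Fitting series; since it is an iterated upper product of $\mathrm{F}^*$ with itself, Proposition \ref{length0} guarantees that each $\mathrm{F}^*_{(k)}$ satisfies $(F1)$ and $(F2)$. As $N$ is a direct product of isomorphic simple groups it is either elementary abelian or semisimple, hence quasinilpotent, and being normal it satisfies $N\leq\mathrm{F}^*(G)\leq\mathrm{F}^*_{(k)}(G)$ for $k\geq 1$. The inclusion $\mathrm{F}^*_{(k)}(G)/N\subseteq\mathrm{F}^*_{(k)}(G/N)$ is then immediate from $(F1)$ applied to $G\to G/N$. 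For the reverse inclusion I would set $M/N=\mathrm{F}^*_{(k)}(G/N)$, so $M\trianglelefteq G$; since $\mathrm{F}^*_{(k)}$ satisfies $(F2)$, it suffices to prove $h^*(M)\leq k$, because then $M=\mathrm{F}^*_{(k)}(M)\subseteq\mathrm{F}^*_{(k)}(G)$.

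First I would record the subadditivity $h^*(X)\leq h^*(A)+h^*(X/A)$ for $A\trianglelefteq X$: one has $A\subseteq\mathrm{F}^*_{(h^*(A))}(X)$ by $(F2)$, and lifting the generalized Fitting series of $X/A$ along $X/A\to X/\mathrm{F}^*_{(h^*(A))}(X)$ via $(F1)$ shows the series of $X$ exhausts $X$ after $h^*(A)+h^*(X/A)$ steps. Next I would transport the hypothesis from $G$ to $M$. Because $N\leq M$, Dedekind's law gives $M\cap C_G^*(N)=N(M\cap C_G(N))=C_M^*(N)$, so $M/C_M^*(N)\cong MC_G^*(N)/C_G^*(N)$ is a normal subgroup of $G/C_G^*(N)$; as $h^*(G/C_G^*(N))\leq k-1$ means $G/C_G^*(N)\in(\mathfrak{N}^*)^{k-1}$, and this class is closed under normal subgroups (being a product of the normal-subgroup-closed class $\mathfrak{N}^*$ with itself), I get $h^*(M/C_M^*(N))\leq k-1$. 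Finally $M/N=\mathrm{F}^*_{(k)}(G/N)$ carries the normal series $\mathrm{F}^*_{(0)}(G/N)\leq\cdots\leq\mathrm{F}^*_{(k)}(G/N)$ with quasinilpotent factors, so $M/N\in(\mathfrak{N}^*)^k$ and $h^*(M/N)\leq k$ by Lemma \ref{form}. Thus everything reduces to the self-contained claim, applied to $M$ in place of $G$: if $h^*(G/N)\leq k$ and $h^*(G/C_G^*(N))\leq k-1$, then $h^*(G)\leq k$.

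To prove this reduced claim I would use Lemma \ref{form} in the form $h^*(G)\leq k\iff G=\mathrm{Z}_{(\mathfrak{N}^*)^k}(G)$, and verify that every chief factor $H/K$ of a chief series of $G$ refining $1\trianglelefteq N\trianglelefteq G$ is $(\mathfrak{N}^*)^k$-central, that is, $h^*\big((H/K)\rtimes(G/C_G(H/K))\big)\leq k$. For a factor with $N\leq K$ one has $[H,N]\leq N\leq K$, so $N\leq C_G(H/K)$ and $H/K$ is a chief factor of $G/N$, hence $(\mathfrak{N}^*)^k$-central since $h^*(G/N)\leq k$. For a factor $H/K\leq N$ I set $X=(H/K)\rtimes Q$ with $Q=G/C_G(H/K)$ acting faithfully; if $N$ is abelian then $C_G^*(N)=C_G(N)\leq C_G(H/K)$, so $Q$ is a quotient of $G/C_G^*(N)$, giving $h^*(Q)\leq k-1$ and $h^*(X)\leq 1+(k-1)=k$ by subadditivity.

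The hard part, and the step I expect to be the main obstacle, is a non-abelian factor $H/K\leq N$. Here $N$ induces on $H/K$ exactly its full inner automorphism group, so the image $I$ of $N$ in $Q$ is isomorphic to $H/K$ and semisimple, while $Q/I\cong G/NC_G(H/K)$ is a quotient of $G/C_G^*(N)$ (because $C_G(N)\leq C_G(H/K)$) and so lies in $(\mathfrak{N}^*)^{k-1}$. The naive bound $h^*(X)\leq h^*(H/K)+h^*(Q)$ costs two layers and only yields $k+1$; the essential point is that the inner part contributes no second layer. Concretely, in $X$ the centralizer $C_X(H/K)$ is normal and maps isomorphically onto $I$, hence is semisimple, so $(H/K)\,C_X(H/K)$ is a quasinilpotent normal subgroup of $X$ with $X/(H/K)C_X(H/K)\cong Q/I$; subadditivity then gives $h^*(X)\leq 1+(k-1)=k$. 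Having shown every chief factor is $(\mathfrak{N}^*)^k$-central, I conclude $G=\mathrm{Z}_{(\mathfrak{N}^*)^k}(G)$, whence $h^*(G)\leq k$, completing the reduced claim and the lemma. The crux throughout is the recognition that the inner automorphisms coming from $N$ are absorbed into $\mathrm{F}^*(X)$ through $C_X(H/K)$ rather than stacking onto the height.
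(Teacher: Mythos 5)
Your argument is correct and follows essentially the same route as the paper's: pass to the preimage $M$ of $\mathrm{F}^*_{(k)}(G/N)$, transport the bound $h^*(\,\cdot\,/C^*(N))\leq k-1$ to $M$ via normality in $G/C_G^*(N)$, show the chief factors below $N$ are $(\mathfrak{N}^*)^k$-central by absorbing the inner automorphisms induced by $N$ into a quasinilpotent normal subgroup, and conclude $M\in(\mathfrak{N}^*)^k$ from Lemma \ref{form} so that $M\subseteq\mathrm{F}^*_{(k)}(G)$. You merely make explicit some steps the paper leaves implicit (the subadditivity of $h^*$ and the structure of $C_X(H/K)$ inside $(H/K)\rtimes(G/C_G(H/K))$), which is sound.
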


\begin{proof}
Assume that $h^*(G/C_G^*(N))\leq k-1$.
Let $F/N=\mathrm{F}^*_{(k)}(G/N)$. Then $\mathrm{F}^*_{(k)}(G)\subseteq F$. Now $F/C_F^*(N)\simeq FC_G^*(N)/C_G^*(N)\trianglelefteq G/C_G^*(N)$.  Hence $h^*(F/C_F^*(N))\leq k-1$. It means that $h^*(F/C_F^*(H/K))\leq k-1$ for every chief factor $H/K$ of $F$ below $N$. Hence $(H/K)\rtimes (F/C_F(H/K))\in (\mathfrak{N}^*)^k$  for every chief factor $H/K$ of $F$ below $N$. It means that $N\leq \mathrm{Z}_{(\mathfrak{N}^*)^k}(F)$. Thus $F\in (\mathfrak{N}^*)^k$ by Lemma \ref{form}. So $F\subseteq \mathrm{F}^*_{(k)}(G)$. Thus $\mathrm{F}^*_{(k)}(G)= F$.
\end{proof}

\begin{proof}[Proof of Theorem \ref{cor81}]
   If $A=1$ or $B=1$, then there is nothing to prove. Assume now that $A, B\neq 1$. Now $h^*(A^{\mathfrak{N}^*})=h^*(A)-1$ and $h^*(B^{\mathfrak{N}^*})=h^*(B)-1$ by Lemma \ref{minusone}. Note that $\max\{h^*(A^{\mathfrak{N^*}}),h^*(B^{\mathfrak{N^*}})\}=\max\{h^*(A), h^*(B)\}-1$. From Theorem \ref{len2} it follows that
     $\max\{h^*(A), h^*(B)\}-1\leq h^*(G)\leq \max\{h^*(A), h^*(B)\}+1$.

Assume that a group $G=AB$ is a minimal order mutually permutable product of groups  $A$ and $B$ with $h^*(G)<\max\{h^*(A), h^*(B)\}$. WLOG we may assume that $h^*(A)>h^*(G)$ and $h^*(A)\geq h^*(B)$. Since  $\max\{h^*(A), h^*(B)\}-1\leq h^*(G)$, we see that $h^*(A)=h^*(G)+1$.

Let $N$ be a minimal normal subgroup of $G$. Then $ N\cap A\in\{N, 1\}$ by \cite[Lemma 4.3.3(4)]{PFG}.

Assume that $N\cap A=1$. Now $ G/N=(AN/N)(BN/N)$ is a mutually permutable product of groups  $AN/N$ and $BN/N$ by \cite[Lemma 4.1.10]{PFG}. By our assumption  and $h^*(G)\geq h^*(G/N)\geq h^*(AN/N)=h^*(A)$, a contradiction. Hence $N\cap A=N$ for every minimal normal subgroup $N$ of $G$. Note that $N\leq \mathrm{F}^*(A)$.

 Now  $h^*(G)+1=h^*(A)>h^*(G)\geq h^*(G/N)\geq h^*(A/N)\geq h^*(A)-1$. It means that $h^*(G)=h^*(A/N)= h^*(A)-1$. If $G$ has two  minimal normal subgroups $N_1$ and $N_2$, then $h^*(A/N_1)=h^*(A/N_2)= h^*(A)-1$. It means $h^*(A)<h^*(A)-1$ by Lemma \ref{form}, a contradiction. Hence $G$ has a unique minimal normal subgroup $N$.

 Assume that $h^*(A/C_A^*(N))<h^*(A)-1$. Then $\mathrm{F}_{(h^*(A)-1)}^*(A/N)=\mathrm{F}_{(h^*(A)-1)}^*(A)/N<A/N$ by Lemma \ref{lem4}. It means that $h^*(A)=h^*(A/N)$. Since $G/N=(A/N)(BN/N)$ is a mutually permutable products of groups $A/N$ and $BN/N$ by \cite[Lemma 4.1.10]{PFG}, we see that $h^*(G)\geq h^*(G/N)\geq h^*(A/N)=h^*(A)>h^*(G)$ by our assumption, a contradiction. Hence  $h^*(A/C_A^*(N))=h^*(A)-1$

    Since $G/C_G^*(N)=(AC_G^*(N)/C_G^*(N))(BC_G^*(N)/C_G^*(N))$ is a mutually permutable products of subgroups $AC_G^*(N)/C_G^*(N)$ and $BC_G^*(N)/C_G^*(N)$ by \cite[Lemma 4.1.10]{PFG} and $A/C_A^*(N)\simeq AC_G^*(N)/C_A^*(N)$, we see that $h^*(G/C_G^*(N))\geq h^*(A/C_A^*(N))=h^*(A)-1$ by our assumptions. Note that $\mathrm{F}^*(G)\leq C_G^*(N)$. Now $h^*(G)-1=h^*(G/\mathrm{F}^*(G))\geq h^*(G/C_G^*(N))\geq h^*(A/C_A^*(N))=h^*(A)-1$. It means that $h^*(G)\geq h^*(A)$, the final contradiction.
      \end{proof}

\section{Some applications of $\mathbb{F}$-functorials}

Recall \cite{mv1} that a subgroup   $H$ of $G$  is called $R$-\emph{subnormal} if $H$ is subnormal in $\langle H, R\rangle$.
 In \cite{MonKon, MonChir, mv2, mv1} the products of $R$-subnormal subgroups were studied for   $R\in\{\mathrm{F}(G), \mathrm{F}^*(G)\}$. It was shown that if  $G$ is the product of two nilpotent (resp. quasinilpotent) $\mathrm{F}(G)$-subnormal (resp. $\mathrm{F}^*(G)$-subnormal) subgroups, then it is nilpotent (resp. quasinilpotent).

 \begin{theorem} The following holds:

 $(1)$ A group $G$ is nilpotent if and only if every maximal subgroup of $G$ is
 $\tilde{\mathrm{F}}(G)$-subnormal.

 $(2)$ Assume that $\mathfrak{F}$ is a Q-closed class of groups which contains every group whose maximal subgroups are  $\mathrm{F}^*(G)$-subnormal. Then $\mathfrak{F}$ is the class of all groups. \end{theorem}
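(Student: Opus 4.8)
The plan is to first translate $R$-subnormality of a maximal subgroup into a usable dichotomy. Let $M$ be a maximal subgroup of $G$ and let $R$ be a characteristic subgroup (here $R=\tilde{\mathrm{F}}(G)$ or $R=\mathrm{F}^*(G)$). If $R\leq M$ then $\langle M,R\rangle=M$, so $M$ is trivially subnormal in $\langle M,R\rangle$; if $R\not\leq M$ then maximality forces $\langle M,R\rangle=G$, so $M$ being $R$-subnormal means $M$ is subnormal in $G$, i.e. (being maximal) $M\trianglelefteq G$. Thus \emph{``every maximal subgroup is $R$-subnormal'' is equivalent to ``every maximal subgroup not containing $R$ is normal in $G$''}. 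The forward implication of $(1)$ is then immediate: if $G$ is nilpotent then $\tilde{\mathrm{F}}(G)=G$ and every maximal subgroup is normal, hence $\tilde{\mathrm{F}}(G)$-subnormal.

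For the converse in $(1)$ I would argue by a minimal counterexample $G$. Using $\tilde{\mathrm{F}}(G/\Phi(G))=\tilde{\mathrm{F}}(G)/\Phi(G)$ and the correspondence between subnormal subgroups of $G$ above $\Phi(G)$ and those of $G/\Phi(G)$, the reformulated hypothesis passes to $G/\Phi(G)$; since the class of nilpotent groups is saturated, $G/\Phi(G)$ is again a non-nilpotent group satisfying the hypothesis, so minimality forces $\Phi(G)=1$ and $\tilde{\mathrm{F}}(G)=\mathrm{Soc}(G)$. Now for each minimal normal subgroup $N$ choose, as $\Phi(G)=1$, a maximal $M$ with $N\not\leq M$; then $\mathrm{Soc}(G)\not\leq M$, so $M\trianglelefteq G$ by the reformulated hypothesis, and $N\cap M\trianglelefteq G$ is proper in $N$, hence trivial, giving $G=N\times M$. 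So every minimal normal subgroup is a direct factor of $G$. A short induction on $|G|$, using $C_G(\mathrm{Soc}(G))\leq\mathrm{Soc}(G)$ (property $(F3)$ for $\tilde{\mathrm{F}}$, established in $(a.2)$), upgrades this to $G=\mathrm{Soc}(G)=\tilde{\mathrm{F}}(G)$. But then no proper subgroup contains $\tilde{\mathrm{F}}(G)=G$, so every maximal subgroup falls into the ``not containing $\tilde{\mathrm{F}}(G)$'' case and is therefore normal; hence $G$ is nilpotent, a contradiction.

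For $(2)$ the plan is to realise an arbitrary group $X$ as a quotient of a member of the class $\mathfrak{Y}=\{G\mid$ every maximal subgroup of $G$ is $\mathrm{F}^*(G)$-subnormal$\}$; since $\mathfrak{F}\supseteq\mathfrak{Y}$ is $Q$-closed this yields $X\in\mathfrak{F}$. The key observation is that whenever $\mathrm{F}^*(G)=\Phi(G)$, every maximal subgroup contains $\mathrm{F}^*(G)$ and is thus (trivially) $\mathrm{F}^*(G)$-subnormal, so such $G$ lies in $\mathfrak{Y}$. I would feed $X$ into the construction $\mathbf{f}$ of Theorem \ref{len1}: the group $\mathbf{f}(X)$ satisfies $\mathrm{F}^*(\mathbf{f}(X))=\Phi(\mathbf{f}(X))$ and $\mathbf{f}(X)/\mathrm{F}^*(\mathbf{f}(X))\cong\mathbb{A}_5\wr X$. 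Hence $\mathbf{f}(X)\in\mathfrak{Y}\subseteq\mathfrak{F}$, so by $Q$-closure $\mathbb{A}_5\wr X\in\mathfrak{F}$, and projecting the wreath product onto its top group gives $X\in\mathfrak{F}$. As $X$ is arbitrary, $\mathfrak{F}$ is the class of all groups.

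The routine parts are the reformulation and the forward direction of $(1)$. The main obstacle is the troublesome case in the backward direction of $(1)$, namely maximal subgroups that \emph{do} contain $\tilde{\mathrm{F}}(G)$, about which the hypothesis gives no information directly; the device that removes it is proving that the hypothesis forces $G=\tilde{\mathrm{F}}(G)$ (through the direct-factor argument together with $(F3)$), which makes that case vacuous. In $(2)$ the only real point is recognising that the Frattini-extension machinery already developed for Theorem \ref{len1} produces, for every $X$, a member of $\mathfrak{Y}$ having $X$ as an iterated quotient, after which $Q$-closure finishes the argument.
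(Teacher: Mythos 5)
Your proposal is correct. Parts (1, forward direction) and (2) take essentially the paper's route: the paper likewise observes that $\mathrm{F}^*(\mathbf{f}(G))=\Phi(\mathbf{f}(G))$ makes every maximal subgroup of $\mathbf{f}(G)$ trivially $\mathrm{F}^*$-subnormal and then applies $Q$-closure, the only cosmetic difference being that it uses the single quotient $\mathbf{f}(G)/\tilde{\mathrm{F}}(\mathbf{f}(G))\simeq G$ where you compose two quotients through $\mathbb{A}_5\wr G$. In the converse of (1) your skeleton matches the paper's (minimal counterexample, reduction to $\Phi(G)=1$ by saturation, final appeal to $C_G(\tilde{\mathrm{F}}(G))\leq\tilde{\mathrm{F}}(G)$), but the middle step is genuinely different: the paper shows that every \emph{abnormal} maximal subgroup must contain $\tilde{\mathrm{F}}(G)$, so $\tilde{\mathrm{F}}(G)\leq\Delta(G)$, and then invokes Gasch\"utz's theorem ($\Delta(G)=\mathrm{Z}(G)$ when $\Phi(G)=1$) to place $\tilde{\mathrm{F}}(G)$ in the centre; you instead show that every minimal normal subgroup $N$ splits off as $G=N\times M$ with $M$ a normal maximal subgroup. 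Your route is more elementary (no abnormality, no Gasch\"utz) and in fact finishes faster than you indicate: since $M$ is both maximal and normal, $N\simeq G/M$ has prime order, so every minimal normal subgroup is central; hence $C_G(\mathrm{Soc}(G))=G$, and $(F3)$ together with $\Phi(G)=1$ gives $G\leq\mathrm{Soc}(G)\leq\mathrm{Z}(G)$, so $G$ is abelian --- the advertised ``short induction on $|G|$'' (the one genuinely vague spot in your sketch) is not needed, nor is the closing ``all maximal subgroups are normal'' step. With that step tightened as above, the argument is complete.
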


 \begin{proof}  Let $G$ be a nilpotent group. It is clear that every maximal subgroup of $G$
 is $\tilde{\mathrm{F}}(G)$-subnormal.

  Conversely. Assume the theorem is false and let a group $G$ the the minimal order counterexample.

 Assume that $\Phi(G)\neq 1$.  Since $ \tilde {\mathrm{F}} (G / \Phi (G)) = \tilde {\mathrm{F}} (G) / \Phi (G) $, we
 see that every maximal subgroup of $ G / \Phi (G) $ is $ \tilde {\mathrm{F}} (G / \Phi (G)) $-subnormal.
  So $G/\Phi(G)$ is nilpotent. Now $G$ is nilpotent, a contradiction.

Assume that $ \Phi (G) = 1 $. Then   $ \tilde {\mathrm{F}} (G) = \mathrm{Soc} (G) $. Let $ M $ be an abnormal maximal
  subgroup of $G$. Since $M$ is $\tilde{\mathrm{F}}(G)$-subnormal maximal subgroup, we see that
 $M\trianglelefteq M\tilde{\mathrm{F}}(G)$. Hence $\tilde{\mathrm{F}}(G)\subseteq M$. It means that
 $\tilde{\mathrm{F}}(G)\leq\Delta(G)$, where $ \Delta(G)$ is the intersection of all abnormal maximal subgroups of $G$. From $\Phi(G)=1$ and Gasch\"{u}tz result (see \cite[1, Corollary 2.36]{Guo2015}) it follows that
 $\tilde{\mathrm{F}}(G)\subseteq \mathrm{Z}(G)$. Since $C_G(\tilde{\mathrm{F}}(G))\subseteq\tilde{\mathrm{F}}(G)$, we see that  $G\subseteq\tilde{\mathrm{F}}(G)\subseteq \mathrm{Z}(G)$. Thus $G$ is nilpotent,
 the  contradiction.

 $(2)$ Let $G$ be a group. Then $\mathrm{F}^*(\mathbf{f}(G))=\Phi(\mathbf{f}(G))$ (see the proof of Theorem \ref{len1}). Now all maximal subgroups of   $\mathbf{f}(G)$ are $\mathrm{F}^*(\mathbf{f}(G))$-subnormal. Hence $\mathbf{f}(G)\in\mathfrak{F}$. Since $\mathfrak{F}$ is Q-closed, we see that $G\simeq \mathbf{f}(G)/\mathrm{\tilde F}(\mathbf{f}(G))\in\mathfrak{F}$. Thus $\mathfrak{F}$ is the class of all groups.        \end{proof}



\begin{theorem}  The following statements for a group $G$ are equivalent:

$(1)$ $G$ is nilpotent;

$(2)$ Every abnormal subgroup of $ G $ is $ \mathrm{F}^{*}(G) $-subnormal
 subgroup of $ G $;

$(3)$ All normalizers  of Sylow subgroups of $G$ are  $ \mathrm{F}^{*} (G) $-subnormal;

$(4)$ All cyclic primary subgroups of $G$ are  $ \mathrm{F}^{*} (G) $-subnormal;

$(5)$ All  Sylow subgroups of $G$ are $ \mathrm{F}^{*} (G) $-subnormal.
\end{theorem}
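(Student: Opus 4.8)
The plan is to establish the cycle $(1)\Rightarrow(2)\Rightarrow(3)\Rightarrow(5)\Rightarrow(4)\Rightarrow(1)$. The implication $(1)\Rightarrow(2)$ is immediate: if $G$ is nilpotent then $\mathrm{F}^*(G)=G$, so $\langle H,\mathrm{F}^*(G)\rangle=G$ for every subgroup $H$, and every subgroup of a nilpotent group is subnormal; hence every subgroup, in particular every abnormal one, is $\mathrm{F}^*(G)$-subnormal. For $(2)\Rightarrow(3)$ I would invoke the classical fact that the normalizer $N_G(P)$ of a Sylow subgroup $P$ is abnormal in $G$, so $(2)$ applies to it. For $(3)\Rightarrow(5)$, note that $P$ is the unique, hence characteristic, Sylow $p$-subgroup of $N_G(P)$; since $N_G(P)$ is subnormal in $\langle N_G(P),\mathrm{F}^*(G)\rangle=N_G(P)\mathrm{F}^*(G)$, the characteristic subgroup $P$ is subnormal there too, and as subnormality is inherited by intermediate subgroups, $P$ is subnormal in $P\mathrm{F}^*(G)$, i.e. $P$ is $\mathrm{F}^*(G)$-subnormal.

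The heart of the matter is the Lemma: \emph{if $\mathrm{F}^*(G)$ normalizes every Sylow subgroup of $G$, then $G$ is nilpotent}. First I would show that both $(5)$ and $(4)$ force this normalizing property. A subnormal Sylow subgroup is normal, so $(5)$ gives $P\trianglelefteq P\mathrm{F}^*(G)$, i.e. $\mathrm{F}^*(G)\le N_G(P)$, for every $P$. For $(4)$: each cyclic primary subgroup $\langle x\rangle$ with $x$ a $p$-element is subnormal in $\langle x\rangle\mathrm{F}^*(G)$, so $\langle x\rangle\le\mathrm{O}_p(\langle x\rangle\mathrm{F}^*(G))$ and therefore $[x,\mathrm{F}^*(G)]\le\mathrm{O}_p(\mathrm{F}^*(G))=\mathrm{O}_p(G)$. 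Applying this to a $p$-element of a component $L$ with nontrivial image in $L/\mathrm{Z}(L)$ shows that element would centralize the simple group $L/\mathrm{Z}(L)$, which is impossible; hence $\mathrm{F}^*(G)=\mathrm{F}(G)$ is nilpotent. Then $[x,\mathrm{O}_{p'}(\mathrm{F}(G))]\le\mathrm{O}_p(G)\cap\mathrm{O}_{p'}(\mathrm{F}(G))=1$, so every Sylow $p$-subgroup centralizes $\mathrm{O}_{p'}(\mathrm{F}(G))$, which together with $\mathrm{O}_p(\mathrm{F}(G))\le P$ says $\mathrm{F}(G)$ normalizes $P$. The intermediate step $(5)\Rightarrow(4)$ uses the same computation: once $P\mathrm{F}(G)=P\times\mathrm{O}_{p'}(\mathrm{F}(G))$, any $\langle x\rangle\le P$ is subnormal in the $p$-group $\langle x\rangle\mathrm{O}_p(G)$, hence in $\langle x\rangle\mathrm{F}(G)$.

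It remains to prove the Lemma, which I regard as the main obstacle. Writing $F=\mathrm{F}^*(G)$, the hypothesis makes $F$ nilpotent, since its Sylow subgroups $F\cap P=\mathrm{O}_p(F)$ are normal in $F$; thus $F=\mathrm{F}(G)$ and, by $(F3)$, $C_G(F)\le F$, so $C_G(F)=\mathrm{Z}(F)$. From $[P,\mathrm{O}_{p'}(F)]=1$ for all Sylow subgroups I would deduce that every $r'$-element centralizes $\mathrm{O}_r(F)$, whence $O^r(G)\le C_G(\mathrm{O}_r(F))$ and $G/C_G(\mathrm{O}_r(F))$ is an $r$-group; intersecting over all $r$ shows $G/C_G(F)=G/\mathrm{Z}(F)$ is nilpotent. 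The decisive final step is then: for $P\in\mathrm{Syl}_p(G)$ the image $P\mathrm{Z}(F)/\mathrm{Z}(F)$ is a Sylow subgroup of the nilpotent group $G/\mathrm{Z}(F)$, hence normal, so $P\mathrm{Z}(F)\trianglelefteq G$; but $P$ centralizes the $p'$-group $\mathrm{Z}(\mathrm{O}_{p'}(F))$, so $P\mathrm{Z}(F)=P\times\mathrm{Z}(\mathrm{O}_{p'}(F))$ and $P$ is its characteristic Sylow $p$-subgroup. Therefore $P\trianglelefteq G$ for every $p$, and $G$ is nilpotent. The points I expect to guard most carefully are the elimination of components (the $E(G)=1$ step) and the splitting $P\mathrm{Z}(F)=P\times\mathrm{Z}(\mathrm{O}_{p'}(F))$, since this is precisely where the hypothesis is genuinely consumed and where superficially similar but non-nilpotent examples such as $\mathbb{S}_3$ (for which $[P,\mathrm{O}_{p'}(\mathrm{F}(G))]\neq1$) are excluded.
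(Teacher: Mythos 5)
Your argument is correct, but it reaches nilpotency by a genuinely different route than the paper. The cycle order differs: the paper proves $(3)\Rightarrow(4)\Rightarrow(5)\Rightarrow(1)$, using for $(3)\Rightarrow(4)$ the chain $H\trianglelefteq\trianglelefteq P\trianglelefteq N_G(P)\trianglelefteq\trianglelefteq N_G(P)\mathrm{F}^*(G)$ and for $(4)\Rightarrow(5)$ the Wielandt--Maier subnormality criterion for products (\cite[Theorem 1.1.7]{PFG}); you go $(3)\Rightarrow(5)\Rightarrow(4)\Rightarrow(1)$, which lets you bypass that criterion entirely (your $(3)\Rightarrow(5)$ via the characteristic Sylow subgroup of $N_G(P)$ is the same idea as the paper's $(3)\Rightarrow(4)$). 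The real divergence is the closing implication: the paper extracts $\mathrm{F}^*(G)\le\bigcap_P N_G(P)$ from pronormal-plus-subnormal-implies-normal and then cites two theorems of Baer --- that this intersection lies in $\mathrm{Z}_\infty(G)$ and that $G^{\mathfrak N}\le C_G(\mathrm{Z}_\infty(G))$ --- which together with $(F3)$ give $G^{\mathfrak N}\le \mathrm{Z}_\infty(G)$ and hence nilpotency in three lines. You instead prove a self-contained lemma: $\mathrm{F}^*(G)\le\bigcap_P N_G(P)$ forces $\mathrm{F}^*(G)=\mathrm{F}(G)$ to be nilpotent, gives $[P,\mathrm{O}_{p'}(\mathrm{F}(G))]=1$, hence $G/\mathrm{Z}(\mathrm{F}(G))$ nilpotent, and finally each $P$ is the characteristic Sylow subgroup of the normal subgroup $P\,\mathrm{Z}(\mathrm{F}(G))$. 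Both arguments consume $(F3)$ at the same pressure point; yours trades the external references to Baer for a longer but elementary computation, at the cost of the extra component-elimination step in $(4)\Rightarrow(1)$ (the $[x,\mathrm{F}^*(G)]\le\mathrm{O}_p(G)$ analysis), which the paper never needs because its $(4)$ only feeds into $(5)$. I checked the delicate steps --- the normality of $F\cap P$ in $F$, the splitting $P\,\mathrm{Z}(F)=P\times\mathrm{Z}(\mathrm{O}_{p'}(F))$, and the existence of a non-central $p$-element in a component --- and they all hold, so the proposal stands as a valid, more self-contained alternative.
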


  \begin{proof}  $(1)\Rightarrow(2)$. Let $G$ be a nilpotent group. It is clear that every
  subgroup of $G$ is $\mathrm{F}^*(G)$-subnormal. Hence (1) implies (2).

 $(2)\Rightarrow(3)$. It is well known that all normalizers of   Sylow subgroups are abnormal. Therefore (2) implies (3).

 $(3)\Rightarrow(4)$. Every cyclic primary subgroup $H$ of $G$ is contained in some Sylow
 subgroup $P$ of $G$. So
 $H\trianglelefteq\trianglelefteq P\trianglelefteq N_G(P)\trianglelefteq \trianglelefteq N_G(P)\mathrm{F}^*(G)$.
 Hence $H\trianglelefteq\trianglelefteq N_G(P)\mathrm{F}^*(G)$. In particular, $H$ is subnormal in $H\mathrm{F}^*(G)$.
 Hence (3) implies (4).

 $(4)\Rightarrow(5)$. Let $ P$ be a Sylow subgroup of $G$ and  $ x \in P $. Then
 $ \langle x \rangle $ is the $ \mathrm{F}^{*} (G) $-subnormal subgroup. So
 $ \langle x \rangle \trianglelefteq \trianglelefteq \langle x \rangle \mathrm{F}^{*} (G) $. Note
that $ \langle x \rangle \trianglelefteq \trianglelefteq P $. Since
$ \langle x \rangle \leq P \cap \langle x \rangle F^{*} (G) $, by
\cite[Theorem 1.1.7]{PFG} $ \langle x \rangle $ is the subnormal subgroup in the product
 $ P (\langle x \rangle \mathrm{F}^{*} (G))=P\mathrm{F}^*(G) $.
 Since $ P $ is generated by its cyclic subnormal in $ P \mathrm{F}^{*} (G) $ subgroups, we see
 that $ P \trianglelefteq \trianglelefteq P\mathrm{F}^{*} (G) $. So (4) implies (5).

$(5)\Rightarrow(1)$.
Let $P$ be a Sylow subgroup of $G$.  Since $P$ is pronormal subnormal subgroup of
  $P\mathrm{F}^*(G)$,  we see that $\mathrm{F}^*(G)\leq N_G(P)$ by \cite[I, Lemma 6.3(d)]{s8}. Now $\mathrm{F}^*(G)$ lies
 in the intersection of all normalizers of Sylow subgroups of $G$. So $ \mathrm{F}^{*} (G) \subseteq \mathrm{Z}_{\infty} (G)$ by \cite[5, Corollary 3]{Baer1953}.  Note that $G^\mathfrak{N}\leq C_G(\mathrm{Z}_{\infty} (G))$ by \cite[4, Claim (10)]{Baer1953}. From $C_G(\mathrm{F}^*(G))\subseteq \mathrm{F}^*(G)$, we see that $G^\mathfrak{N}\leq \mathrm{Z}_{\infty} (G)$. It means that  $G$ is nilpotent. \end{proof}

{\bibliographystyle{spmpsci}
\small
\parskip=-0mm
\parsep=0mm
\itemsep=0mm
 \bibliography{functorials}}

\begin{thebibliography}{10}
\providecommand{\url}[1]{{#1}}
\providecommand{\urlprefix}{URL }
\expandafter\ifx\csname urlstyle\endcsname\relax
  \providecommand{\doi}[1]{DOI~\discretionary{}{}{}#1}\else
  \providecommand{\doi}{DOI~\discretionary{}{}{}\begingroup
  \urlstyle{rm}\Url}\fi

\bibitem{Baer1953}
{Baer}, R.: {Group elements of prime power index}.
\newblock {Trans. Amer. Math. Soc.} \textbf{75}, 20--47 (1953)

\bibitem{PFG}
Ballester-Bolinches, A., Esteban-Romero, R., Asaad, M.: {Products of Finite
  Groups}.
\newblock De Gruyter (2010).
\newblock \doi{10.1515/9783110220612}.
\newblock \urlprefix\url{https://doi.org/10.1515/9783110220612}

\bibitem{s9}
Ballester-Bollinches, A., Ezquerro, L.M.: {Classes of Finite Groups},
  \emph{Math. Appl.}, vol. 584.
\newblock Springer Netherlands (2006).
\newblock \doi{10.1007/1-4020-4719-3}

\bibitem{s8}
Doerk, K., Hawkes, T.O.: {Finite Soluble Groups}, \emph{De Gruyter Exp. Math.},
  vol.~4.
\newblock De Gruyter, Berlin, New York (1992).
\newblock \doi{10.1515/9783110870138}

\bibitem{Fitting1938}
Fitting, H.: {Beitr\"{a}ge zur Theorie der Gruppen endlicher Ordnung}.
\newblock Jahresbericht der Deutschen Mathematiker-Vereinigung \textbf{48},
  77--141 (1938).
\newblock \urlprefix\url{http://eudml.org/doc/146181}

\bibitem{Foerster1984}
F\"{o}rster, P.: {Projektive Klassen endlicher Gruppen}.
\newblock {Math. Z.} \textbf{186}(2), 149--178 (1984).
\newblock \doi{10.1007/BF01161802}.
\newblock \urlprefix\url{https://doi.org/10.1007/BF01161802}

\bibitem{Foerster1985}
F\"{o}rster, P.: {PROJEKTIVE KLASSEN ENDLICHER GRUPPEN: IIa. Ges\"{a}ttigte
  Formationen: Ein allgemeiner Satz von Gasch\"{u}tz-Lubeseder-Baer-Typ}.
\newblock {Pub, Mat. UAB} \textbf{29}(2/3), 39--76 (1985)

\bibitem{Gardner2003}
Gardner, B.J., Wiegandt, R.: Radical theory of rings.
\newblock Marcel Dekker New York (2003).
\newblock \doi{10.1201/9780203913352}

\bibitem{Griess1978}
Griess, R.L., Schmid, P.: {The Frattini module}.
\newblock {Arch. Math.} \textbf{30}(1), 256--266 (1978).
\newblock \doi{10.1007/BF01226050}.
\newblock \urlprefix\url{https://doi.org/10.1007/BF01226050}

\bibitem{Guo2015}
Guo, W.: {Structure Theory for Canonical Classes of Finite Groups}.
\newblock Springer-Verlag, Berlin, Heidelberg (2015).
\newblock \doi{10.1007/978-3-662-45747-4}

\bibitem{19}
Huppert, B., Blackburn, N.: {Finite groups III}, \emph{Grundlehren Math.
  Wiss.}, vol. 243.
\newblock Springer-Verlag, Berlin, Heidelberg (1982).
\newblock \doi{10.1007/978-3-642-67997-1}

\bibitem{KHUKHRO2015}
Khukhro, E.I., Shumyatsky, P.: On the length of finite groups and of fixed
  points.
\newblock {Proc. Amer. Math. Soc.} \textbf{143}(9), 3781--3790 (2015).
\newblock \urlprefix\url{http://www.jstor.org/stable/24507886}

\bibitem{Khukhro2017}
Khukhro, E.I., Shumyatsky, P.: Engel-type subgroups and length parameters of
  finite groups.
\newblock Isr. J. Math. \textbf{222}(2), 599--629 (2017).
\newblock \doi{10.1007/s11856-017-1601-0}

\bibitem{MonKon}
Konovalova, M.N., Monakhov, V.S.: Finite groups with some subnormal 2-maximal
  subgroups.
\newblock PFMT \textbf{43}, 75--79 (2020).
\newblock \urlprefix\url{http://mi.mathnet.ru/pfmt716}.
\newblock In Russian

\bibitem{MonChir}
Monakhov, V.S., Chirik, I.K.: Finite factorised groups whose factors are
  subnormal supersolvable subgroups.
\newblock PFMT \textbf{28}, 40--46 (2016).
\newblock \urlprefix\url{http://mi.mathnet.ru/pfmt453}.
\newblock In Russian

\bibitem{mv2}
Murashka, V.I.: {Products of F*(G)-subnormal subgroups of finite groups}.
\newblock Russian Math. (Iz. VUZ) \textbf{61}(6), 66--71 (2017).
\newblock \doi{10.3103/S1066369X17060093}

\bibitem{Plotkin1973}
Plotkin, B.I.: Radicals in groups, operations on group classes and radical
  classes.
\newblock In: Selected questions of algebra and logic, pp. 205--244. Nauka,
  Novosibirsk (1973).
\newblock In Russian

\bibitem{Schmid1972}
Schmid, P.: {\"{U}ber die Automorphismengruppen endlicher Gruppen}.
\newblock {Arch. Math.} \textbf{23}(1), 236--242 (1972).
\newblock \doi{10.1007/BF01304876}

\bibitem{f4}
Shemetkov, L.A.: Formations of finite groups.
\newblock Nauka, Moscow (1978).
\newblock In Russian

\bibitem{s6}
Shemetkov, L.A., Skiba, A.N.: Formations of algebraic systems.
\newblock Nauka, Moscow (1989).
\newblock In Russian

\bibitem{TOWERS2017}
Towers, D.A.: {The generalised nilradical of a Lie algebra}.
\newblock J. Algebra \textbf{470}, 197--218 (2017).
\newblock \doi{https://doi.org/10.1016/j.jalgebra.2016.08.037}

\bibitem{mv1}
Vasil'ev, A.F., Murashka, V.I.: {Formations and Products of F(G)-Subnormal
  Subgroups of Finite Solvable Groups}.
\newblock Math. Notes \textbf{107}(3), 413--424 (2020).
\newblock \doi{10.1134/S0001434620030050}

\end{thebibliography}

\end{document}